\newtheorem{conj}{Conjecture}
\newtheorem{thm}[conj]{Theorem}
\newtheorem{theorem}{Theorem}
\newtheorem{ce}{Counterexample}
\newtheorem{rem}[conj]{Remark}
\newtheorem{lem}[conj]{Lemma}
\newtheorem{prop}[conj]{Proposition}
\newtheorem{ques}{Question}
\newtheorem{defn}[conj]{Definition}
\newtheorem{cor}[conj]{Corollary}
\newcommand{\dlat}{\mathrm{d} }
\newcommand{\vol}{\mathrm{Vol}}
\newcommand{\supp}{\mathrm{supp}}
\newcommand{\R}{\mathbb{R}}
\newcommand{\N}{\mathbb{N}}
\newcommand{\Z}{\mathbb{Z}}
\def\s{\mathbb{S}}
\def\R{{\mathbb R}}
\def\B{B_2^n}
\def\phi{\varphi}
\newcommand\nnfootnote[1]{%
  \begin{NoHyper}
  \renewcommand\thefootnote{}\footnote{#1}%
  \addtocounter{footnote}{-1}%
  \end{NoHyper}
}
\begin{document}

\title{On the Fourier Mean Bodies of a Convex Body \nnfootnote{MSC Classification: 52A40, 52A30;  Secondary: 28A75, 42B10 \\ Keywords: Intersection bodies of star bodies, isotropic position of convex bodies, geometric tomography, Radial Mean Bodies, Fourier Mean Bodies} }
\author{Dylan Langharst, Auttawich Manui and Artem Zvavitch}
\date{}
\maketitle
\begin{abstract}
In 1998, R. Gardner and G. Zhang introduced the radial $p$th mean bodies $R_pK$ of a convex body $K\subset\mathbb R^n$, $p>-1$, which have since become important objects in geometric tomography. In this paper we study the Fourier transforms of the radial functions of $R_pK$. This leads to a new family of star-shaped sets $F_pK$, which we call the Fourier $p$th mean bodies of $K$. We prove Fourier inversion formulas connecting $R_pK$ and $F_pK$, realizing them as $p$-intersection bodies in the sense of A. Koldobsky. We develop the basic affine geometry of $F_pK$; this includes affine invariance and monotonicity properties. 

We identify the range of $p$ where $F_p K$ is compact in terms of the decay of $|\widehat{\chi_K}|^2$. We show that $F_pK$ is an origin-symmetric convex body for every $0<p\le1$. This range is sharp in general: already for the cube,
$
F_p[-1,1]^n$ is not convex for $1<p<2$ and $n\geq 2,$
while
$F_p[-1,1]^n$ is not compact for $p\geq 2$. We further investigate the features Fourier mean bodies share with intersection bodies: we prove Hensley-type estimates for $F_pK$ when $K$ is isotropic and investigate a few affine isoperimetric inequalities.
\end{abstract}
\tableofcontents

\section{Introduction}
Letting $\R^n$ denote the usual $n$-dimensional Euclidean space with inner product $\langle \,,\rangle$, we write $\vol_n$ for the full-dimensional Lebesgue measure on $\R^n$ and $\s^{n-1}$ for the unit sphere. We set $\theta^\perp:=\{x\in\R^n: \langle x,\theta \rangle=0\}$ the hyperplane through the origin orthogonal to $\theta \in \s^{n-1}$. We say a set $M\subset\R^n$ is origin-symmetric if $M=-M$.

A set $M\subset\R^n$ is \textit{star-shaped} if it contains the origin $o$ and if $x\in M$ implies $[o,x]\subset M$. For such an $M$, fixed henceforth, its radial function is, for $\theta\in \s^{n-1}$, $\rho_M(\theta)=\sup\{t>0:t\theta\in M\}$, and, for $z\in \R^n\setminus\{o\}$, $\rho_M(z):=|z|^{-1}\rho_M\left(\frac{z}{|z|}\right)$. We say $M$ is \textit{an $L^p$-star} if $\rho_M$ is not identically zero and $\rho_M\in L^{p}(\s^{n-1})$. Using the \textit{polar coordinate formula} for the volume of star-shaped sets, $\vol_n(M)=\frac{1}{n}\int_{\s^{n-1}}\rho_M^n(\theta)d\theta,$ we see that $M$ is \textit{an $L^n$-star} if and only if $\vol_n(M)<+\infty$. A set $M$ is a \textit{star body} if its radial function $\rho_M$ on $\s^{n-1}$ is finite (i.e., $M$ is compact), positive (i.e., $M$ has non-empty interior), and continuous. Finally, we say $K\subset \R^n$ is a \textit{convex body} if it is a compact, convex set with non-empty interior.

\subsection{Intersection Bodies} In 1988, E. Lutwak \cite{LE98} introduced the \textit{intersection  body} of a star body $M$ as the star body $IM$ given by the radial function $$\rho_{IM}(\theta) := \vol_{n-1}(M\cap \theta^\perp).$$
A celebrated theorem of H. Busemann \cite{HB49} (see also \cite{MP89}) asserts that, if $K \subset \R^n$ is an origin-symmetric convex body, then so too is $IK$. The intersection body played a fundamental role in resolving the Busemann-Petty problem, originally posed in \cite{BP56} and ultimately solved through a series of works, e.g. \cite{MP92,GZ94, GR94_2, AK98_3, GZ99_2, GKS99, GKS99_2}. We refer the reader to the monograph by A. Koldobsky \cite{AK05} and the book by R. Gardner \cite{gardner_book} for a thorough background on the history of the Busemann-Petty problem and the development of intersection bodies. 

We say that a convex body $K$ is \textit{isotropic} if it has center of mass at the origin, if $\vol_n(K)=1$, and if there exists a constant $L_{K}>0$ such that
\begin{equation}
\label{eq:isotropic_constant}
\int_{K}\langle x,\theta \rangle^2 dx = L_{K}^2, \quad \theta\in\s^{n-1}.
\end{equation}
It is easy to see that one can always apply an affine transformation to a convex body so that its image is isotropic. A theorem of D. Hensley \cite{HD80} (see also \cite{MP89,BGVV14}), originally stated in the origin-symmetric case, asserts that if $K$ is any isotropic convex body, then $IK$ is isomorphic to the Euclidean ball. Specifically, there exist absolute constants $c,b>0$ such that
\begin{equation}
\label{eq:hen}
cL_K^{-1}<\vol_{n-1}(K\cap \theta^\perp)<bL_K^{-1}, \qquad \forall \; \theta\in\s^{n-1},
\end{equation}
or  $c L^{-1}_K \le \rho_{IK} \le b L^{-1}_K$ pointwise. The verification of \eqref{eq:hen} in the non-symmetric case is due to M. Fradelizi \cite{FM97}, who also obtained the sharp constants and equality characterization; see Section~\ref{sec:hen} for details.

Intersection bodies serve as a link between geometric tomography and Fourier analysis. Indeed, \cite[Lemma 3.7]{AK05} asserts
\begin{equation}
\label{eq:intersection_fourier}
    \rho_{IM} = \frac{1}{\pi(n-1)}\widehat{\rho_M^{n-1}}.
\end{equation}
Here, $\widehat{\cdot}$ denotes the Fourier transform in the distributional sense; see Section~\ref{sec:fourier_facts} for details. In our work, we consider a Fourier analytic classification of a series of convex bodies introduced by R. Gardner and G. Zhang. 

\subsection{Radial Mean Bodies and Polar Mean Zonoids}
For a convex body $K\subset \R^n$, R. Gardner and G. Zhang introduced the so-called radial $p$th mean bodies $R_p K$ of $K$, where $p>-1$, in their remarkable work \cite{GZ98}. We present here the following definition, which is entirely equivalent to that in \cite{GZ98}; see, e.g. \cite{GZ98,LP25} and \cite[Proposition 1.4]{LMU25}.  We recall the \textit{covariogram} function of a convex body $K\subset \R^n$:
\begin{equation}
g_K(x)=\vol_n(K\cap(K+x)), \qquad x\in\R^n.
\label{eq:covario}
\end{equation}
Let $K\subset \R^n$ be a convex body. Then, for $p>-1$, its {\it radial $p$th mean body $R_p K\subset \R^n$} is the star body given by the radial function, for $\theta\in\s^{n-1}$,
    \begin{equation}
\rho_{R_p K}(\theta)=\begin{cases}
    \left(p\int_0^{+\infty}\left(\frac{g_K(r\theta)}{\vol_n(K)}\right)r^{p-1}dr\right)^\frac{1}{p}, & p>0,
    \\
    \exp\left(\int_0^{+\infty}\frac{\partial}{\partial r}\left(\frac{-g_K(r\theta)}{\vol_n(K)}\right)\log(r)dr\right), & p=0,
    \\
    \left(p\int_{0}^{+\infty} \left(\frac{g_K(r\theta)}{\vol_n(K)}-1\right) r^{p-1} d r\right)^\frac{1}{p}, & p\in (-1,0).
\end{cases}
\label{eq:radial_ell}
\end{equation}
In particular, $R_\infty K=DK:=\{x\in \R^n: K\cap (K+x)\neq \emptyset\}$ is the difference body of $K$. 

The evenness of $g_K$ implies that the sets $R_p K$ are origin-symmetric for all $p>-1$. R. Gardner and G. Zhang \cite[Corollary 4.2]{GZ98} used K. Ball's theorem (recalled in Theorem~\ref{t:radial_ball} below) to establish the convexity of $R_p K$ for $p\geq 0$. Recently, the first-named author extended Ball's result to $p>-1$ \cite{DL26}, thereby yielding convexity of $R_p K$ for all $n\in \N$; the convexity when $n=2$ was previously established directly by J. Haddad \cite{JH26} using a different method.
\begin{theorem}
\label{t:gardner_zhang}
    Let $K\subset \R^n$ be a convex body. Then, for all $p>-1$, $R_p K$ is an origin-symmetric convex body.
\end{theorem}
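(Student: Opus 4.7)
The plan is to reduce the statement to K. Ball's theorem on radial functions (Theorem~\ref{t:radial_ball}), using the covariogram $g_K$ as the input log-concave function. The symmetry part is immediate, so the real content is convexity, and the whole argument hinges on verifying the hypothesis of Ball's theorem for $f(x) := g_K(x)/\vol_n(K)$.

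First I would dispose of origin-symmetry. The change of variables $y = x + z$ in the defining integral shows $g_K(x) = g_K(-x)$, i.e. the covariogram is even. Consequently, each integrand in \eqref{eq:radial_ell} depends only on the line through the origin spanned by $\theta$, so $\rho_{R_pK}(\theta) = \rho_{R_pK}(-\theta)$ for all $\theta \in \s^{n-1}$ and every $p > -1$. In particular $R_pK = -R_pK$.

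Next, for convexity I would establish the key analytic fact that $g_K$ is log-concave on $\R^n$. Writing $g_K = \mathbf{1}_K * \mathbf{1}_{-K}$ as a convolution of log-concave functions (indicators of convex sets) and invoking the Pr\'ekopa--Leindler inequality gives log-concavity of $g_K$; equivalently, one can argue directly from Brunn--Minkowski applied to the convex set $\{(x,y)\in\R^n\times\R^n: y\in K,\, y-x\in K\}$ and its sections. Since $g_K$ is also even, the normalized function $f(x) = g_K(x)/\vol_n(K)$ is an even, log-concave, bounded, integrable function on $\R^n$ (in fact compactly supported on $DK$).

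The final step is to invoke K. Ball's theorem (Theorem~\ref{t:radial_ball}): for an even log-concave function $f:\R^n\to [0,\infty)$ and for $p \geq 0$, the function on $\s^{n-1}$ defined by the corresponding $L^p$- or logarithmic integral of $f$ along rays is the radial function of an origin-symmetric convex body. Applied to $f = g_K/\vol_n(K)$, the $p > 0$ and $p = 0$ cases of Ball's theorem give exactly the first two branches of \eqref{eq:radial_ell}, so $R_pK$ is convex for every $p \geq 0$. The main obstacle, if one wants a self-contained argument, is really the log-concavity of the covariogram, but this is standard; the restriction $p \geq 0$ in the statement reflects precisely the range in which Ball's theorem produces a convex body from a log-concave density, so there is no room to push the argument to $p \in (-1,0)$ without new ideas.
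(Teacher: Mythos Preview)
Your proposal is correct and follows essentially the same approach the paper indicates: the evenness of $g_K$ gives origin-symmetry, the $(1/n)$-concavity (hence log-concavity) of $g_K$ via Brunn--Minkowski places $g_K/\vol_n(K)$ in the scope of Ball's Theorem~\ref{t:radial_ball}, and then $R_pK = K_p(g_K/\vol_n(K))$ is convex for $p>0$, with $p=0$ obtained by a limiting argument. The only minor point is that the paper states Ball's theorem for $p>0$ and deduces $p=0$ by continuity rather than treating the logarithmic case as part of the theorem itself, but this is purely a matter of packaging.
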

For $q>-1$, the polar $L^q$ centroid body (see e.g. \cite{LZ97} and \cite{LYZ00}) of a star body $M$ is the origin-symmetric star body $\Gamma^\circ_q M$ given by 
\begin{equation}
\label{eq:L_p}
    \rho_{\Gamma^\circ_q M}(u)=\left(\frac{1}{\vol_n(M)}\int_{M}|\langle x,u \rangle|^q dx\right)^{-\frac{1}{q}}.
\end{equation}
By Jensen's inequality, $q\mapsto \Gamma^\circ_q M$ is continuous in the Hausdorff metric. For $q\neq 0$, the \textit{polar $q$th mean zonoid} $Z^\circ_q K$ of a convex body $K$ is given by
    \begin{equation}Z^\circ_{q}K = \left(\frac{\vol_n(K)}{\vol_n(R_{n+q}K)}\right)^\frac{1}{q}\Gamma_q^\circ (R_{n+q} K).
 \label{eq:mean_zonoid_relate}
 \end{equation}
 It follows from  \eqref{eq:mean_zonoid_relate} that $Z^\circ_{q}K$ are origin-symmetric star bodies. These were first introduced  by R. Gardner and A. Giannopoulos \cite{GG99}, under a slightly different normalization. The following theorem of G. Berck \cite{BG09} shows that $Z_q^\circ K$ is a convex body for all $q> -1$.
 
 \begin{theorem}
\label{t:berck}
    Let $q > -1, q\neq 0$ and let $K\subset \R^n$ be an origin-symmetric convex body. Then, $\Gamma_q^\circ K$ is an origin-symmetric convex body.
\end{theorem}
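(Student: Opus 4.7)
The plan is to reduce the conclusion to the convexity of a positively $1$-homogeneous function on $\R^n$. Origin-symmetry of $\Gamma_q^\circ K$ is immediate from $K = -K$ together with the evenness of $|t|^q$, so only convexity remains. Extending $\rho_{\Gamma_q^\circ K}$ to $\R^n \setminus \{o\}$ as a $(-1)$-homogeneous function, convexity of $\Gamma_q^\circ K$ is equivalent to convexity of the positively $1$-homogeneous function
\[
F(u) := \left(\int_K |\langle x, u\rangle|^q\, dx\right)^{1/q}, \qquad u \in \R^n \setminus \{o\},
\]
which extends continuously by $F(o):=0$.

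In the classical range $q \geq 1$, convexity of $F$ is a direct consequence of Minkowski's integral inequality, since $F$ is (a constant multiple of) the $L^q$-norm of the linear form $x \mapsto \langle x, u\rangle$ on $(K, dx)$. The interesting case, and the true content of the theorem, is $q \in (-1, 0) \cup (0, 1)$, where $F$ is no longer visibly a norm. I would first perform a reduction to a one-dimensional integral: by the coarea formula,
\[
F(u)^q = |u|^q \int_{-\infty}^{\infty} |s|^q\, A_K(s; u/|u|)\, ds,
\]
where $A_K(s;\theta) = \vol_{n-1}(K\cap \{\langle x,\theta\rangle = s\})$ is the parallel-section function. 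Because $K$ is an origin-symmetric convex body, Brunn's theorem gives that $A_K(\cdot;\theta)$ is even and has concave $(n-1)$-st root, and in particular is an even log-concave function; this also guarantees integrability of the inner integral for all $q > -1$.

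The main step --- and the chief obstacle --- is then to prove the following Busemann-type convexity statement: for every origin-symmetric convex body $K \subset \R^n$ and every $q > -1$, $q \neq 0$, the map
\[
u\,\longmapsto\,|u|\left(\int_{-\infty}^\infty |s|^q\, A_K(s; u/|u|)\, ds\right)^{1/q}
\]
is convex on $\R^n\setminus\{o\}$. Busemann's classical theorem is recovered heuristically by concentrating $|s|^q\, ds$ at $s = 0$, so this is a weighted deformation of his result. The natural plan is to follow the Ball template: fix two directions $\theta_1, \theta_2$, slice $\R^n$ by a two-plane $E$ containing them, integrate out $E^\perp$ by Fubini to reduce to a planar statement about sections of $K\cap (E+y)$, and then in dimension two verify the required convexity inequality along the segment from $\theta_1$ to $\theta_2$ by combining the one-dimensional Brunn--Minkowski inequality with the evenness and log-concavity of $A_K(\cdot;\theta)$. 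The range $q \in (-1, 0)$ demands extra care because $|s|^q$ blows up at $s = 0$, and integrability has to be controlled uniformly along the convex-combination path; this is precisely where the strict lower bound $q > -1$ enters. Granting this Busemann-type inequality, convexity of $F$, and hence of $\Gamma_q^\circ K$, follows at once.
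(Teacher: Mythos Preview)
The paper does not prove this theorem. It is stated as a result of G.~Berck \cite{BG09} and invoked as a black box in several places (e.g.\ in the proofs of Lemma~\ref{l:chain_psis}, Theorem~\ref{t:main_convexity}, and Lemma~\ref{l:F_p<1}), but no argument for it is given or sketched here. Consequently there is no ``paper's own proof'' to compare your proposal against.

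As for your sketch on its own terms: the reduction to the planar Busemann-type inequality via the Ball template is a reasonable strategy, and is in the spirit of how such results are often approached. However, the step you flag as the ``main step'' is genuinely the entire difficulty, and you have not actually carried it out---you have only said that one should ``combine the one-dimensional Brunn--Minkowski inequality with evenness and log-concavity.'' For $q\in(-1,0)$ in particular, this combination is not routine: the weight $|s|^q$ is neither log-concave nor monotone, and the standard Ball--Busemann argument does not apply without a further idea. Berck's own proof in \cite{BG09} proceeds differently, via an integral representation that writes $|\langle x,u\rangle|^q$ (for $q\in(-1,0)$) as a superposition of functions for which a Busemann-type convexity is already available, rather than by a direct planar reduction. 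So while your outline points in a plausible direction, it stops short of the substantive content of the theorem.
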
 

In this work, we are interested in studying $\widehat{\rho_{R_p K}^p}$ and uncovering the radial functions of other sets that depend on $K$. We first present the Fourier transform formula of $R_pK$ for ``large'' $p$, i.e., when $p>n-1$.
\begin{thm}
\label{t:p_n_+_1}
    Let $K\subset\R^n$ be a convex body. Then, there exists an explicit constant (see \eqref{eq:m} below) depending only on $p$ such that, for $p >n-1$, $p\neq n+2k$ with $k\in \N$,
    \begin{equation}
        m(p)\widehat{\rho_{R_{p} K}^{p}}=\vol_n(K)\rho_{Z_{p-n}^\circ K}^{n-p}.
        \label{eq:p=n+1}
    \end{equation}
    In particular, $
m(p)\rho_{R_p K}^p
$
is a positive-definite distribution.
\end{thm}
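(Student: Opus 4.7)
The plan is to recognize both sides of \eqref{eq:p=n+1} as integral transforms of the covariogram $g_K$, and to match them using a classical Fourier--Mellin identity for homogeneous distributions. To begin, I would rewrite $\rho_{R_pK}^p$ on $\R^n\setminus\{o\}$ as
\[
\rho_{R_pK}^p(x)=\frac{p}{\vol_n(K)}\int_0^\infty g_K(sx)\,s^{p-1}\,ds,
\]
which follows from the definition \eqref{eq:radial_ell} via the substitution $s=r/|x|$ and the convention $\rho_{R_pK}(x)=|x|^{-1}\rho_{R_pK}(x/|x|)$. Taking the distributional Fourier transform and using $\widehat{g_K}=|\widehat{\mathbf{1}_K}|^2$ together with the dilation identity $\widehat{g_K(s\,\cdot)}(\xi)=s^{-n}|\widehat{\mathbf{1}_K}(\xi/s)|^2$, then the change of variable $t=1/s$, yields
\[
\widehat{\rho_{R_pK}^p}(\xi)=\frac{p}{\vol_n(K)}\int_0^\infty t^{n-p-1}|\widehat{\mathbf{1}_K}(t\xi)|^2\,dt.
\]

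For the key step, I would first work in the range $n-1<p<n$, where $n-p-1\in(-1,0)$ makes the above integral absolutely convergent at both endpoints. Expanding $|\widehat{\mathbf{1}_K}(t\xi)|^2=\int_{\R^n}g_K(z)\cos(t\langle z,\xi\rangle)\,dz$ via the evenness of $g_K$ and swapping order of integration reduces the $t$-integral to the Mellin identity
\[
\int_0^\infty t^{s-1}\cos(at)\,dt=\Gamma(s)\cos(\pi s/2)\,|a|^{-s},\qquad 0<s<1,
\]
applied with $s=n-p$. The outcome is the covariogram integral $\int_{\R^n}g_K(z)|\langle z,\xi\rangle|^{p-n}\,dz$ multiplied by an explicit trigonometric--Gamma factor. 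On the other hand, inserting the integral formula for $\rho_{R_pK}^p(\theta)$ into the polar-coordinates expression coming from \eqref{eq:L_p}--\eqref{eq:mean_zonoid_relate} gives exactly
\[
\rho_{Z_{p-n}^\circ K}^{n-p}(\xi)=\frac{1}{\vol_n(K)^2}\int_{\R^n} g_K(z)\,|\langle z,\xi\rangle|^{p-n}\,dz.
\]
Matching the two scalar coefficients and applying the reflection identity $\Gamma(z)\Gamma(1-z)=\pi/\sin(\pi z)$ reproduces the stated $m(p)$ and establishes \eqref{eq:p=n+1} on the open interval $(n-1,n)$.

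The extension to $p\geq n$ with $p\notin n+2\N$ requires interpreting both sides as tempered distributions on $\R^n$. My plan is to analytically continue in $p$: both $\widehat{\rho_{R_pK}^p}$ and $\rho_{Z_{p-n}^\circ K}^{n-p}$ define meromorphic families of tempered distributions via the Gelfand--Shilov theory of distributional powers $|t|^\lambda$, and since the identity holds on the open set $n-1<p<n$, uniqueness of meromorphic continuation forces agreement on the full domain $p>n-1$, $p\neq n+2k$. The excluded values are precisely those where the pole of $\Gamma(n-p)$ fails to be cancelled by a zero of $\cos(\pi(n-p)/2)$.

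The positive-definiteness claim follows at once from Bochner's theorem for tempered distributions: once the identity is in place, the right-hand side $\vol_n(K)\rho_{Z_{p-n}^\circ K}^{n-p}$ is a non-negative locally integrable function of polynomial growth, hence a positive tempered measure, which is exactly the Bochner criterion for $m(p)\rho_{R_pK}^p$ to be positive-definite. The main technical obstacle I anticipate is the analytic-continuation step: one must track simultaneous regularizations of $\rho_{R_pK}^p$ (not locally integrable at the origin when $p\geq n$) and of $|\langle z,\xi\rangle|^{p-n}$ (with a non-integrable singularity along the hyperplane $z\perp\xi$ when $p-n$ approaches negative integers), and verify that the resulting pole/zero cancellations in the coefficient match the exclusion set $p\in n+2\N$ exactly.
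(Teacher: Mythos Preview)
Your approach is correct and constitutes a genuine alternative to the paper's argument. The paper proceeds differently: it invokes Koldobsky's Fourier transform of the $q$th cosine transform (Proposition~\ref{p:Koldobsky_cosine}) as a black box, applied directly to $f=\rho_{R_pK}^p$ with $q=p-n$, which yields
\[
\widehat{\rho_{R_pK}^p}(x)=\frac{1}{m(p)}\int_{\s^{n-1}}|\langle u,x\rangle|^{p-n}\rho_{R_pK}^p(u)\,du
\]
in one stroke for all $p>n-1$, the distributional regularization and the full range of $p$ being absorbed into that citation. The remaining step is then an elementary polar-coordinates identity: the spherical integral equals $p\int_{R_pK}|\langle y,x\rangle|^{p-n}\,dy$, which is recognized as $p\,\vol_n(R_pK)\,\rho_{\Gamma_{p-n}^\circ(R_pK)}^{n-p}$ and then as $p\,\vol_n(K)\,\rho_{Z_{p-n}^\circ K}^{n-p}$ via \eqref{eq:mean_zonoid_relate}. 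Your route instead passes through the covariogram: you land on $\int_0^\infty t^{n-p-1}|\widehat{\chi_K}(t\xi)|^2\,dt$ (which is essentially the $F_{n-p}K$ computation the paper carries out later in Theorem~\ref{t:fourier_small_p}), then undo the Fourier transform via the Mellin--cosine identity to reach $\int_{\R^n}g_K(z)|\langle z,\xi\rangle|^{p-n}\,dz$, which you identify with $Z_{p-n}^\circ K$ through the covariogram formula \eqref{eq:new_radial_Zstar}. What your approach buys is a direct link to $F_pK$ and a self-contained derivation; what the paper's approach buys is brevity and no need for analytic continuation, since Proposition~\ref{p:Koldobsky_cosine} already encodes the distributional regularization for all $p>n-1$. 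One technical point you should flag: the swap of $t$- and $z$-integration when applying the Mellin identity is not covered by Fubini, since $\int_0^\infty t^{s-1}\cos(at)\,dt$ is only conditionally convergent; you will need a convergence factor or a distributional argument there, which is precisely the work that Proposition~\ref{p:Koldobsky_cosine} packages.
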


\subsection{Fourier Mean Bodies}
Our next goal is to obtain the analogue of Theorem~\ref{t:p_n_+_1} for $0<p<n$, which we achieve in Theorem~\ref{t:fourier_small_p} below. To this end, we define a new star-shaped set associated with a given convex body. It is essential for us that these sets are not necessarily compact. 
\begin{defn}
\label{def:F_pK}
    Let $K\subset\R^n$ be a convex body. Then, for $p>0$, its Fourier $p$th mean body $F_p K \subset\R^n$ is the origin-symmetric star-shaped set given by the radial function
    \begin{equation}
        \rho_{F_p K}(\theta) =\left(\frac{p}{\vol_n(K)}\int_{0}^{+\infty}|\widehat{\chi_K}(r\theta)|^2r^{p-1}dr\right)^\frac{1}{p}, \quad \theta\in\s^{n-1}.
        \label{eq:fourier_body}
    \end{equation}
    \end{defn}

    We claim that for $0<p<n$, the function $\rho_{F_p K}$ is not identically infinite and, moreover, $F_p K$ has finite volume, i.e., that $F_p K$ is a $L^n$-star for $p\in (0,n]$. 
\begin{prop}
\label{p:mono}
    Let $K$ be a convex body in $\R^n$. Then, the following hold.
    \begin{enumerate} 
    \item Firstly, the set $F_n K$ is an $L^n$-star; in fact, $\vol_n(F_n K) = (2\pi)^n$. 
    \item 
    Next, we have the set inclusion
    \begin{equation}F_p K \subset \vol_n(K)^{\frac{1}{p}-\frac{1}{q}} F_q K, \qquad 0<p<q\leq n.
    \label{eq:growing_sets}
    \end{equation}
    Combining (1) and (2) gives $F_p K$ is an $L^n$-star for $p\in (0,n]$.
        \item Finally, we have for $p\in (0,n)$ that $$K\mapsto \vol_n(K)^\frac{p-n}{p}\vol_n(F_p K)$$ is an affine-invariant functional over the set of convex bodies in $\R^n$.
    \end{enumerate}
\end{prop}

With this knowledge in hand, we obtain the following Fourier connection between $R_p K$ and $F_p K$. In Section~\ref{sec:fourier_facts}, Definition~\ref{d:p_intersection_fourier}, we define $I_p M$, what we call the $p$-intersection star of a $L^{n-p}$-star $M$; again the sets we consider are not necessarily compact.

\begin{thm}
\label{t:fourier_small_p}
    Let $K\subset \R^n$ be a convex body and $p \in (0,n)$. Then, $\widehat{\rho_{R_p K}^p}=\frac{p}{n-p}\rho_{F_{n-p} K}^{n-p}$ and $\widehat{\rho_{F_{p} K}^{p}}=\frac{p(2\pi)^n}{n-p}\rho_{R_{n-p} K}^{n-p}$
    in the sense of distributions. In particular:
    \begin{enumerate}
        \item  The function $\rho_{R_p K}^p$ is a positive-definite distribution, i.e. $R_p K$ is a $p$-intersection body:  \begin{equation}
    \label{eq:R_K_I_p_false_start}
    R_p K = (2\pi)^\frac{p-n}{p}I_p \left(F_{n-p}K\right).\end{equation}
        \item The function $\rho_{F_{p} K}^{p}$ is a positive-definite distribution, i.e. $F_p K$ is a $p$-intersection star:
        \begin{equation}
    \label{eq:F_K_I_p}
    F_{p} K = (2\pi)I_p \left(R_{n-p}K\right).\end{equation} 
    \end{enumerate}
\end{thm}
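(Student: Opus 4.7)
My approach is to express both $\rho_{R_pK}^p$ and $\rho_{F_pK}^p$ as Mellin-type radial integrals of $g_K$ and $|\widehat{\chi_K}|^2$ respectively, then exploit the convolution identity $g_K=\chi_K\ast\chi_{-K}$ together with a scaling duality for the radial integration operator. First, I would extend $\rho_{R_pK}^p$ to a $(-p)$-homogeneous function on $\R^n\setminus\{o\}$ via $\rho_M(z)=|z|^{-1}\rho_M(z/|z|)$ and check, by the substitution $r=s|z|$ in \eqref{eq:radial_ell} and \eqref{eq:fourier_body}, that for $z\neq o$,
\begin{equation}
\rho_{R_pK}^p(z)=\frac{p}{\vol_n(K)}\int_0^\infty g_K(sz)\,s^{p-1}\,ds, \qquad \rho_{F_pK}^p(z)=\frac{p}{\vol_n(K)}\int_0^\infty |\widehat{\chi_K}(sz)|^2\,s^{p-1}\,ds.
\end{equation}
For $p\in(0,n)$ both expressions define locally integrable, $(-p)$-homogeneous tempered distributions on $\R^n$.

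Denote $H_pf(z):=\int_0^\infty f(sz)\,s^{p-1}\,ds$. The crux of the proof is the Fourier duality
\begin{equation}
\widehat{H_pf}=H_{n-p}\widehat{f}
\end{equation}
in the sense of tempered distributions, which follows formally by exchanging the order of integration and then substituting $t=1/s$:
\begin{equation}
\widehat{H_pf}(\xi)=\int_0^\infty s^{p-n-1}\widehat{f}(\xi/s)\,ds=\int_0^\infty t^{n-p-1}\widehat{f}(t\xi)\,dt=H_{n-p}\widehat{f}(\xi).
\end{equation}
To justify this for $f=g_K$, I would pair against a Schwartz function $\phi$, transfer the transform via Parseval, $\langle\widehat{H_pg_K},\phi\rangle=\langle H_pg_K,\widehat{\phi}\rangle$, and interchange orders by Fubini, using that $p\in(0,n)$ controls the $s\to0$ and $s\to\infty$ endpoints, $g_K$ is bounded with compact support, and $\widehat\phi$ decays rapidly.

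Using $\widehat{g_K}=\widehat{\chi_K}\cdot\overline{\widehat{\chi_K}}=|\widehat{\chi_K}|^2$, the duality immediately yields
\begin{equation}
\widehat{\rho_{R_pK}^p}=\frac{p}{\vol_n(K)}H_{n-p}(|\widehat{\chi_K}|^2)=\frac{p}{n-p}\rho_{F_{n-p}K}^{n-p},
\end{equation}
which is the first identity. The second identity is obtained by applying $\widehat{\cdot}$ once more, using Fourier inversion $\widehat{\widehat{f}}(\xi)=(2\pi)^nf(-\xi)$ together with the origin-symmetry of $R_pK$ and $F_pK$, and then relabeling $p\leftrightarrow n-p$. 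Both $\rho_{R_pK}^p$ and $\rho_{F_pK}^p$ are thereby exhibited as positive multiples of Fourier transforms of non-negative functions, so they are positive-definite distributions; the claims \eqref{eq:R_K_I_p_false_start} and \eqref{eq:F_K_I_p} then follow by matching constants against Definition~\ref{d:p_intersection_fourier}.

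The main obstacle I expect is the rigorous verification of the distributional duality $\widehat{H_pg_K}=H_{n-p}|\widehat{\chi_K}|^2$. Neither input is Schwartz: $g_K$ is merely Lipschitz with compact support, and $|\widehat{\chi_K}|^2$ decays only polynomially. The assumption $p\in(0,n)$ is essential, since it is precisely what makes $\int_0^1 s^{p-1}\,ds$ converge (giving local integrability of $H_pg_K$ near the origin) while simultaneously making $\int_1^\infty t^{n-p-1}|\widehat{\chi_K}(t\xi)|^2\,dt$ converge in the averaged sense required on the Fourier side. Once these two endpoint estimates are cleanly in place, the remainder of the argument is a routine unwinding of definitions.
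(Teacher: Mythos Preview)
Your proposal is correct and follows essentially the same route as the paper's proof: both pair $\rho_{R_pK}^p$ against a test function, invoke Parseval, interchange integrals via Fubini (justified by $p\in(0,n)$, the compact support and boundedness of $g_K$, and rapid decay of the test function), apply the dilation law \eqref{eq:Fourier_factoring}, perform the substitution $r\mapsto 1/r$, and use $\widehat{g_K}=|\widehat{\chi_K}|^2$; the second identity then follows by a further application of Parseval/Fourier inversion. Your packaging of the argument via the operator $H_p$ and the duality $\widehat{H_p f}=H_{n-p}\widehat f$ is a clean way to organize exactly the same computation the paper carries out line by line in \eqref{eq:RF_actions_og}--\eqref{eq:RF_actions}.
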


\noindent We briefly consider the Fourier transform of $\rho_{R_p K}^p$ for $p\in (-1,0)$ in Section~\ref{sec:open}; see Theorem~\ref{t:embeds} for  a result in the planar case. We henceforth turn our attention to studying the sets $F_p K$.

The fact that $F_p K$ is a $L^n$-star for $p\in (0,n)$ is unexpected; the asymptotics of $|\widehat{\chi_K}|$ vary substantially as a function of the geometry of $K$. This, in turn, affects when the integral in \eqref{eq:fourier_body} is finite \textit{for a fixed $\theta$}. However, L. Brandolini, S. Hofmann, and A. Iosevich showed that \cite[Theorem 1.1]{BHI03}, for a convex body $K\subset \R^n$, $
    \left(\int_{\s^{n-1}}
    \!\!|\widehat{\chi_K}(r\theta)|^2d\theta\right)^\frac{1}{2} \!=\!\mathcal{O}\left(\!r^{-\frac{n+1}{2}}\!\right)$ as $r\to \infty$. That is, averaging increases decay rates, which served as a guiding principle in our work. 
    
    With this in mind, our main theorem concerning the compactness of $F_p K$ is the following. Recall that a convex body $K$ is $C^2_+$ \textit{smooth} if it has $C^2$ smooth boundary with positive Gaussian curvature everywhere.
    \begin{thm}
\label{t:compactness}
Let $K\subset\R^n$ be a convex body. Then, there exists $p(K)\geq 2$ such that $F_p K$ is a star body for $p \in (0, p(K))$. Additionally, $p(K)$ is an affine-invariant quantity. Moreover, $p([-1,1]^n)=2$, and, if $K$ is $C^2_+$ smooth, then $p(K)=n+1$.
\end{thm}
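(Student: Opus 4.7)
The plan is to identify $p(K)$ as the critical exponent governing integrability at infinity of $r \mapsto |\widehat{\chi_K}(r\theta)|^2$. The starting point is the classical uniform decay estimate $|\widehat{\chi_K}(\xi)| \leq C_K(1+|\xi|)^{-1}$, obtained by integration by parts against the oscillatory kernel (divergence theorem). This produces a $\theta$-independent dominant $|\widehat{\chi_K}(r\theta)|^2 r^{p-1} \leq C\, r^{p-1}(1+r)^{-2}$, which is integrable on $(0,\infty)$ precisely for $p<2$. Dominated convergence then gives continuity of $\rho_{F_p K}$ on $\s^{n-1}$, while $\widehat{\chi_K}(0)=\vol_n(K)>0$ together with continuity of $\widehat{\chi_K}$ yields positivity; hence $F_p K$ is a star body for all $p\in(0,2)$. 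I then set $p(K):=\sup\{p>0:F_pK \text{ is a star body}\}$, so that $p(K)\geq 2$.

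To see that this supremum definition yields star bodies throughout $(0,p(K))$, suppose $F_{p_0}K$ is a star body. For $p<p_0$ I split the integral at $r=1$: the part on $[0,1]$ is bounded by $\vol_n(K)^2/p$ via $|\widehat{\chi_K}|\leq\vol_n(K)$, and the part on $[1,\infty)$ is dominated by the $r^{p_0-1}$ integral, which is uniformly finite since $\rho_{F_{p_0}K}$ is continuous on the compact sphere. Continuity of $\rho_{F_pK}$ then follows from the tail estimate $\int_R^\infty|\widehat{\chi_K}(r\theta)|^2 r^{p-1}dr\leq R^{p-p_0}\int_R^\infty|\widehat{\chi_K}(r\theta)|^2 r^{p_0-1}dr$, which vanishes uniformly in $\theta$ as $R\to\infty$, combined with dominated convergence on $[0,R]$. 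Affine invariance is a direct consequence of $\widehat{\chi_{TK}}(\xi)=|\det T|\widehat{\chi_K}(T^t\xi)$: the substitution $s=r|T^t\theta|$ gives $\rho_{F_p TK}(\theta)=|\det T|^{1/p}\rho_{F_pK}(T^t\theta)$, and since $T^t$ is a linear isomorphism, continuity and finiteness on $\s^{n-1}$ transfer.

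For the cube, the factorization $\widehat{\chi_{[-1,1]^n}}(x)=\prod_{i=1}^n 2\sin(x_i)/x_i$ gives $|\widehat{\chi_{[-1,1]^n}}(re_1)|^2=4^n\sin^2(r)/r^2$, so the integrand in \eqref{eq:fourier_body} behaves like $r^{p-3}\sin^2(r)$ at infinity, which fails to be integrable for every $p\geq 2$; hence $\rho_{F_p([-1,1]^n)}(e_1)=+\infty$ for all $p\geq 2$, giving $p([-1,1]^n)=2$.

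The main technical obstacle is the $C^2_+$ case, where I invoke the Herz/Randol stationary-phase asymptotic
\begin{equation*}
\widehat{\chi_K}(r\theta)=r^{-(n+1)/2}\bigl(a(\theta)e^{-irh_K(\theta)}+\overline{a(-\theta)}e^{irh_K(-\theta)}\bigr)+\mathcal{O}\bigl(r^{-(n+3)/2}\bigr),
\end{equation*}
where $|a(\theta)|$ is a positive multiple of $\kappa(\theta)^{-1/2}$ with $\kappa(\theta)>0$ the Gaussian curvature at the boundary point of $K$ with outer normal $\theta$. Squaring, $r^{n+1}|\widehat{\chi_K}(r\theta)|^2=(|a(\theta)|^2+|a(-\theta)|^2)+(\text{oscillating cross terms})+\mathcal{O}(r^{-1})$. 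When integrated against $r^{p-1}$, the constant piece contributes $\int_1^\infty r^{p-n-2}dr$, which diverges for every $p\geq n+1$, while the oscillating cross terms yield bounded integrals by the Dirichlet test / integration by parts; since $|a(\theta)|^2+|a(-\theta)|^2>0$ for all $\theta$, this forces $\rho_{F_pK}(\theta)=+\infty$ for every $\theta$ once $p\geq n+1$, hence $p(K)\leq n+1$. Conversely, the asymptotic yields the uniform upper bound $|\widehat{\chi_K}(r\theta)|^2\leq C(1+r)^{-(n+1)}$, producing a dominant $Cr^{p-1}(1+r)^{-(n+1)}$ integrable for $p<n+1$, so dominated convergence gives $p(K)\geq n+1$. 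Pinning down the uniformity (in $\theta$) of both the stationary-phase expansion and its error term is the delicate step.
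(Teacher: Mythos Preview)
Your proof is correct and uses the same classical ingredients as the paper---the uniform $|\widehat{\chi_K}(\xi)|\lesssim|\xi|^{-1}$ bound for the lower bound $p(K)\ge 2$, the product factorization for the cube, and the Herz--Randol stationary-phase asymptotics for the $C^2_+$ case. The paper's proof is considerably more terse: it simply cites these decay results and reads off the exponents, whereas you fill in the dominated-convergence argument for continuity, the downward-closure of $\{p:F_pK\text{ is a star body}\}$, and the analysis of the non-oscillating positive part $|a(\theta)|^2+|a(-\theta)|^2$ in the $C^2_+$ upper bound.

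The one genuine difference worth flagging is your \emph{definition} of $p(K)$. The paper sets $p(K)$ equal to the Fourier index
\[
p(K)=\sup\Bigl\{\alpha\ge 0:\sup_{\theta\in\s^{n-1}}\sup_{r\ge 1}r^\alpha|\widehat{\chi_K}(r\theta)|^2<\infty\Bigr\},
\]
whereas you take $p(K)=\sup\{p>0:F_pK\text{ is a star body}\}$. Your quantity is a priori \emph{at least} the paper's (a uniform $r^{-\alpha}$ bound on $|\widehat{\chi_K}|^2$ forces $F_pK$ to be a star body for $p<\alpha$ by dominated convergence), and could in principle be strictly larger if, say, $|\widehat{\chi_K}(r\theta)|^2$ decayed like $r^{-\alpha}(\log r)^{-2}$. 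This does not affect the theorem as stated, since you verify directly that both the cube and $C^2_+$ bodies have $\rho_{F_pK}\equiv+\infty$ at $p=2$ and $p=n+1$ respectively, pinning down the exact value under either definition. But be aware that your $p(K)$ and the paper's Fourier index are not obviously the same object for a general convex body.
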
 

We refer to the quantity $p(K)$ from Theorem~\ref{t:compactness} as the \textit{Fourier index of $K$}; it is nothing but the optimal decay rate of $\widehat{g_K},$ i.e.,
\[
p(K)
:= \sup\Big\{\alpha \ge 0 \;:\;
\sup_{\theta \in \s^{n-1}} \;
\sup_{r \ge 1} \;
r^{\alpha}\, \big|\widehat{\chi_K}(r\theta)\big|^2 < \infty
\Big\}.
\]
Some other pertinent results on the asymptotics of $|\widehat{\chi_K}|$ can be found in \cite{HCS62,SMM98,SI71,BRT98}. 

Having discussed when $F_p K$ is a star body, we now turn to the natural question: \textit{is $F_p K$ convex?}
For $p\in (0,1]$, we answer this question in the affirmative. 
\begin{thm}
\label{t:main_convexity}
    Let $K\subset \R^n$ be a convex body and let $p\in (0,1]$. Then, $F_p K$ is an origin-symmetric convex body.
\end{thm}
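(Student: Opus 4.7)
The plan is to rewrite $\rho_{F_p K}(\theta)^p$ as a Riesz-energy-type integral against the covariogram $g_K$, and then invoke a Busemann-Berck convexity theorem for log-concave densities. The origin-symmetry $F_p K=-F_p K$ is immediate from $|\widehat{\chi_K}(-\xi)|=|\widehat{\chi_K}(\xi)|$, so the content of the theorem lies in convexity.

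For $\theta\in\s^{n-1}$, let $A_{K,\theta}(t):=\vol_{n-1}(K\cap\{\langle x,\theta\rangle=t\})$, so that $\widehat{\chi_K}(r\theta)=\widehat{A_{K,\theta}}(r)$ is a one-dimensional Fourier transform. For $p\in(0,1)$, the one-dimensional Fourier transform of $|r|^{p-1}$ is the positive function $2\Gamma(p)\cos(\pi p/2)|\xi|^{-p}$. Combining Parseval's formula with the identity $|\widehat f|^2=\widehat{f*\tilde f}$ (where $\tilde f(t):=f(-t)$), then substituting $(t,s)=(\langle x,\theta\rangle,\langle y,\theta\rangle)$ with $(x,y)\in K\times K$ and finally $z=x-y$, one obtains
\begin{equation*}
\rho_{F_p K}(\theta)^p = \frac{C(p)}{\vol_n(K)}\int_{DK}|\langle z,\theta\rangle|^{-p}\,g_K(z)\,dz,\qquad p\in(0,1),
\end{equation*}
with $C(p):=p\,\Gamma(p)\cos(\pi p/2)>0$. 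Since $g_K=\chi_K*\chi_{-K}$ is the convolution of indicators of two convex bodies, Pr\'ekopa-Leindler shows that $g_K$ is an even, log-concave function on $\R^n$ supported on $DK$.

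Consequently, up to a positive scaling, $F_p K$ coincides with the polar $L^{-p}$-centroid body of the even log-concave measure $g_K(z)\,dz$. Berck's Theorem~\ref{t:berck} admits a well-known extension to even log-concave densities, whose proof rests on Busemann's theorem as extended by K.~Ball to log-concave functions; applied with $q=-p\in(-1,0)$, this yields convexity of $F_p K$ for every $p\in(0,1)$. The endpoint $p=1$ must be treated separately, since $C(1)=0$ while the corresponding singular integral diverges. The one-dimensional Plancherel identity instead gives
\begin{equation*}
\rho_{F_1 K}(\theta)=\frac{\pi}{\vol_n(K)}\int_\R A_{K,\theta}(t)^2\,dt=\frac{\pi}{\vol_n(K)}\int_{\theta^\perp}g_K(z)\,dz_{\perp},
\end{equation*}
so $F_1 K$ is, up to scaling, the intersection body of the log-concave function $g_K$, and it is convex by the log-concave form of Busemann's theorem. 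Alternatively, the $p=1$ case follows from the $p\in(0,1)$ case by continuity of $p\mapsto F_p K$ together with closedness of convexity under Hausdorff limits.

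The main obstacle is the log-concave extension of Berck's theorem used above: Theorem~\ref{t:berck} as stated concerns uniform measures on origin-symmetric convex bodies, whereas $g_K(z)\,dz$ is typically non-uniform (and, for non-symmetric $K$, one no longer has a natural ambient origin-symmetric convex body in the Berck setup). Thus one must explicitly invoke the Busemann-Ball log-concave Busemann inequality to run Berck's convexity argument in this functional setting.
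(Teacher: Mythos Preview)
Your formula $\rho_{F_p K}(\theta)^p = \frac{\Gamma(p+1)\cos(\pi p/2)}{\vol_n(K)}\int_{\R^n}|\langle z,\theta\rangle|^{-p}g_K(z)\,dz$ is correct and in fact coincides with the paper's formula \eqref{eq:using_og_radon_3} and with Lemma~\ref{l:F_p<1} (it says precisely that $F_pK$ is a dilate of $Z^\circ_{-p}K$). The overall strategy is sound, but the paper's direct proof neutralizes the very obstacle you flag, rather than appealing to a log-concave extension of Berck's theorem. The trick is to \emph{unfold} the convolution: instead of staying in $\R^n$ with the log-concave density $g_K$, the paper lifts to $\R^{2n}$ and observes that
\[
\int_{\R^n}|\langle z,\theta\rangle|^{-p}g_K(z)\,dz=\int_{K\times(-K)}\bigl|\langle(x_1,x_2),(\theta,\theta)\rangle\bigr|^{-p}\,dx_1\,dx_2,
\]
so that $\rho_{F_pK}(\theta)$ is a constant multiple of $\rho_{\Gamma^\circ_{-p}(K\times(-K))}\bigl((\theta,\theta)/\sqrt{2}\bigr)$. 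Since $K\times(-K)$ is an origin-symmetric convex body in $\R^{2n}$ regardless of any symmetry of $K$, Theorem~\ref{t:berck} applies \emph{as stated}, and $F_pK$ is realized as a central section of a convex body. Your route can also be completed without any new ingredient: by Proposition~\ref{p:ball_homo_formula} one has $\Gamma^\circ_{-p}(g_K/\vol_n(K))$ equal to a dilate of $\Gamma^\circ_{-p}(R_{n-p}K)$, where $R_{n-p}K$ is the Ball body $K_{n-p}(g_K/\vol_n(K))$; since $R_{n-p}K$ is an origin-symmetric convex body by Theorem~\ref{t:gardner_zhang}, Berck again applies directly. This is exactly the mechanism behind Lemma~\ref{l:chain_psis} and the alternative proof via Lemma~\ref{l:F_p<1}. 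In short, both of the paper's proofs avoid the ``log-concave Berck'' black box by reducing to a genuine convex body---either $K\times(-K)$ in $\R^{2n}$ or $R_{n-p}K$ in $\R^n$---and your argument would be complete once you make one of these reductions explicit. Your treatment of $p=1$ by continuity matches the paper's.
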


\noindent Concerning the $p=1$ case of Theorem~\ref{t:main_convexity}, we actually show the following more precise relation.

\begin{lem}
\label{l:F_1K_is_convex}
    Let $K\subset\R^n$ be a convex body. Then, $F_1 K =\pi I(R_{n-1} K).$ In particular, $F_1 K$ is an origin-symmetric convex body. 
\end{lem}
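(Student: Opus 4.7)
The plan is to obtain the identity $F_1 K = \pi I(R_{n-1}K)$ by taking the $p \to 1^-$ limit of Lemma~\ref{l:F_p<1}, as suggested in the excerpt. Once this identity is in hand, origin-symmetric convexity of $F_1 K$ is automatic: $R_{n-1}K$ is an origin-symmetric convex body by Theorem~\ref{t:gardner_zhang}, and Busemann's theorem then forces $I(R_{n-1}K)$, and hence its dilate, to be an origin-symmetric convex body.

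The first step is to unfold the formula in Lemma~\ref{l:F_p<1}. Using the definition \eqref{eq:L_p} of $\Gamma^\circ_{-p}$, the identification \eqref{eq:mean_zonoid_relate}, and polar coordinates in the integral $\int_{R_{n-p}K}|\langle x,\theta\rangle|^{-p}\,dx$, I obtain, for $p \in (0,1)$,
\begin{equation}
\rho_{F_p K}(\theta)^p = \frac{\Gamma(1+p)\cos(\pi p/2)}{n-p}\int_{\s^{n-1}}|\langle u,\theta\rangle|^{-p}\,\rho_{R_{n-p}K}(u)^{n-p}\,du.
\end{equation}

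Next, I would analyze the two factors as $p \to 1^-$. The prefactor vanishes linearly, $\Gamma(1+p)\cos(\pi p/2)/(n-p)\sim \pi(1-p)/[2(n-1)]$, while the sphere integral develops a simple pole at $p=1$: parametrizing $u \in \s^{n-1}$ by tubular coordinates about the great subsphere $\s^{n-1}\cap\theta^\perp$ so that $\langle u,\theta\rangle = \sin\alpha$, the blow-up of $|\sin\alpha|^{-p}$ near $\alpha=0$ yields
\begin{equation}
\int_{\s^{n-1}}|\langle u,\theta\rangle|^{-p}f(u)\,du = \frac{2}{1-p}\int_{\s^{n-1}\cap\theta^\perp}f(v)\,dv + O(1)
\end{equation}
as $p\to 1^-$, for any continuous $f$. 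The two singular contributions cancel, and since $\rho_{R_{n-p}K}^{n-p}\to \rho_{R_{n-1}K}^{n-1}$ uniformly on $\s^{n-1}$ by the continuity of $p\mapsto R_p K$ in the Hausdorff metric (cf.\ \eqref{eq:radial_growing}), I conclude
\begin{equation}
\lim_{p\to 1^-}\rho_{F_p K}(\theta)^p = \frac{\pi}{n-1}\int_{\s^{n-1}\cap\theta^\perp}\rho_{R_{n-1}K}(v)^{n-1}\,dv = \pi\,\vol_{n-1}(R_{n-1}K\cap\theta^\perp) = \pi\,\rho_{I(R_{n-1}K)}(\theta),
\end{equation}
where the second equality is polar coordinates in $\theta^\perp$.

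The left-hand side converges to $\rho_{F_1 K}(\theta)$ by dominated convergence applied to the defining integral \eqref{eq:fourier_body}, delivering $F_1 K = \pi I(R_{n-1}K)$. The main obstacle is the singular-integral asymptotic above: one must isolate the contribution of a thin neighborhood of $\s^{n-1}\cap\theta^\perp$, show the leading coefficient is indeed $2/(1-p)$ (rather than a different constant), and bound the remainder uniformly for $p$ in a one-sided neighborhood of $1$, so that the vanishing prefactor kills all non-leading terms. As a cross-check, the same identity can be obtained without passing to a limit by specializing Theorem~\ref{t:fourier_small_p} to $p = n-1$ (which gives $\widehat{\rho_{R_{n-1}K}^{n-1}} = (n-1)\rho_{F_1 K}$) and comparing with \eqref{eq:intersection_fourier}.
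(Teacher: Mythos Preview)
Your main argument follows the same limiting strategy as the paper (take $p\to 1^-$ in Lemma~\ref{l:F_p<1}), but the execution differs: the paper invokes the Lutwak--Zhang limit (Proposition~\ref{p:LZ}) as a black box, whereas you reprove that limit inline via the singular-integral asymptotic $\int_{\s^{n-1}}|\langle u,\theta\rangle|^{-p}f(u)\,du = \frac{2}{1-p}\int_{\s^{n-1}\cap\theta^\perp}f + O(1)$. Your pole/zero cancellation is correct, and you rightly flag the uniform remainder control as the technical point; note also that $f=\rho_{R_{n-p}K}^{n-p}$ itself depends on $p$, so the $O(1)$ bound must be uniform in that dependence as well---this is exactly what Proposition~\ref{p:LZ} packages, so citing it would spare you the work.

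Your closing cross-check is actually a cleaner and logically independent proof than either yours or the paper's: Theorem~\ref{t:fourier_small_p} at $p=n-1$ gives $\widehat{\rho_{R_{n-1}K}^{n-1}}=(n-1)\rho_{F_1K}$, and comparing with \eqref{eq:intersection_fourier} immediately yields $\rho_{F_1K}=\pi\rho_{I(R_{n-1}K)}$. Since Theorem~\ref{t:fourier_small_p} is proved earlier and does not rely on the present lemma, this one-line route is non-circular and arguably preferable.
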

\begin{figure}[H]
  \centering
  \begin{subfigure}[b]{0.49\textwidth}
    \centering
    \includegraphics[width=\textwidth]{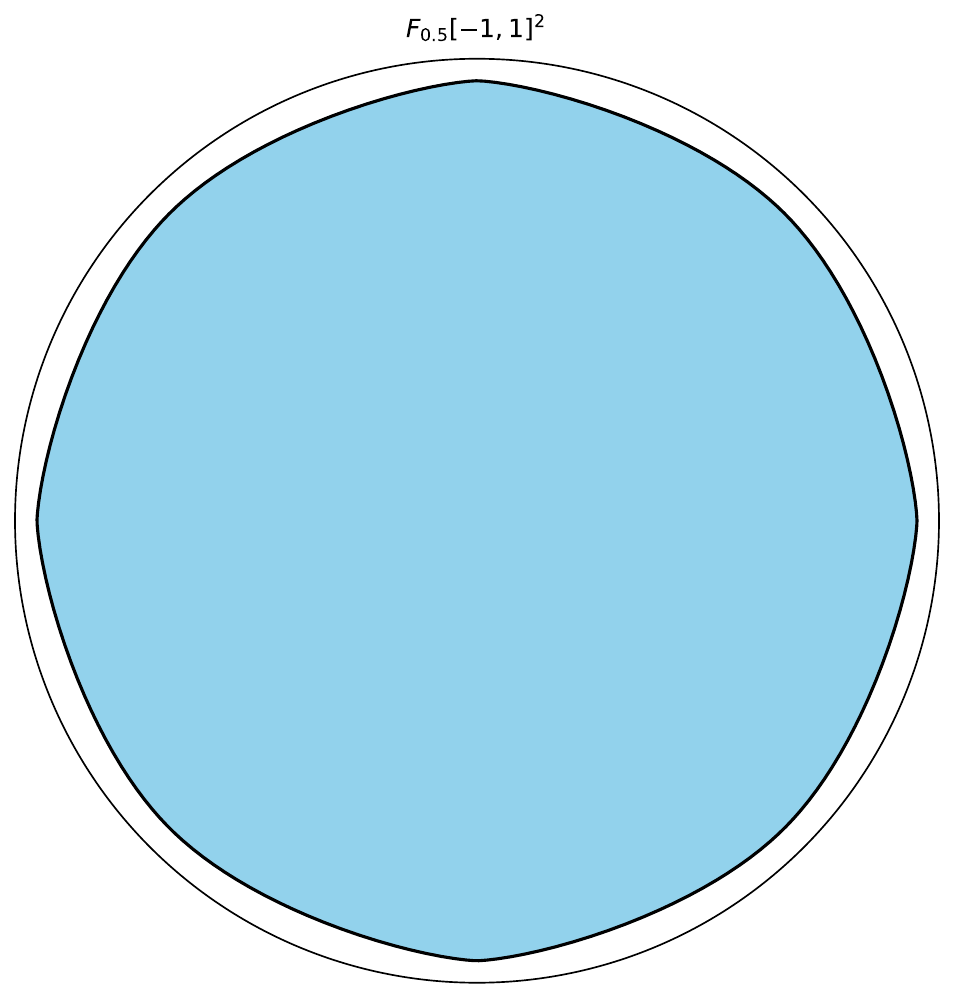}
    \label{fig:F0.5}
  \end{subfigure}
  \hfill
  \begin{subfigure}[b]{0.49\textwidth}
    \centering
    \includegraphics[width=\textwidth]{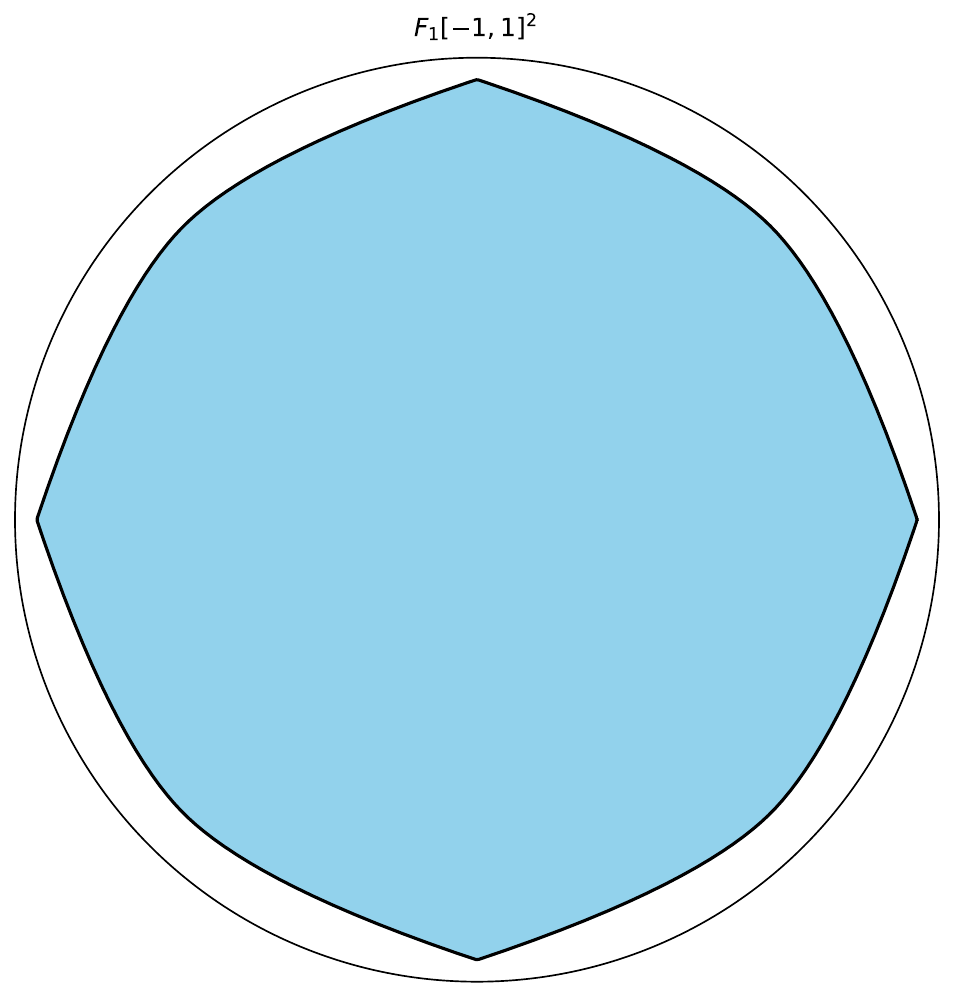}
    \label{fig:F1}
  \end{subfigure}
  \caption{The body $F_p K$ when $p \in (0,1]$ and $K=[-1,1]^2$. Convex.}
  \label{fig:F_p_convex}
\end{figure}

Despite these results, the star-shaped set $F_p K$ is \textit{not} convex in general when $p>1$, even in the case of an origin-symmetric convex body. Our counterexample is the unit cube in $\R^n$. Actually, it follows from the planar case. We dedicate Section~\ref{sec:non} to the proof of the following theorem, which is a technical and delicate examination of the radial function of $F_p [-1,1]^2$.
\begin{thm}
\label{t:cube_not}
    Let $p\in (1,2)$. Then, for $n\in \N$, $n\geq 2$, $F_p [-1,1]^n$ is not convex.
\end{thm}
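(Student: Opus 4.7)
The plan is to reduce the theorem to the planar case $n=2$ via a coordinate-plane cross-section, and then prove non-convexity of $F_p[-1,1]^2$ near the axis directions by a local asymptotic expansion of its radial function. For the reduction, let $K_n = [-1,1]^n$. Since $\widehat{\chi_{K_n}}(\xi) = \prod_{j=1}^n \frac{2\sin \xi_j}{\xi_j}$, for any $\theta = (\theta_1, \theta_2, 0,\ldots, 0) \in \s^{n-1}$ lying in the coordinate plane $V := \spa(e_1, e_2)$, one has $|\widehat{\chi_{K_n}}(r\theta)|^2 = 4^{n-2}|\widehat{\chi_{K_2}}(r(\theta_1,\theta_2))|^2$. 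Combined with $\vol_n(K_n) = 2^n$, $\vol_2(K_2) = 4$, and \eqref{eq:fourier_body}, this yields $\rho_{F_p K_n}(\theta) = 2^{(n-2)/p}\rho_{F_p K_2}(\theta_1, \theta_2)$. Hence the cross-section $(F_p K_n) \cap V$ equals $2^{(n-2)/p}F_p K_2$, and since this intersection must be convex whenever $F_p K_n$ is, it suffices to show that $F_p K_2$ is not convex for each $p \in (1,2)$.

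\textbf{Asymptotic expansion near the axis.} For $K = K_2$ and $\theta_\phi = (\cos\phi,\sin\phi)$, the substitution $u = r\cos\phi$ in \eqref{eq:fourier_body} gives
\[
\rho_{F_p K}(\theta_\phi)^p = \frac{4p\cos^{2-p}\phi}{\sin^2\phi}\,J(\tan\phi),\qquad J(\beta) := \int_0^\infty \frac{\sin^2 u\,\sin^2(u\beta)}{u^{5-p}}\,du.
\]
Let $\mathcal{M} := \int_0^\infty \sin^2 u \cdot u^{p-3}\, du$, which is finite for $p \in (1,2)$. Subtracting the $(u\beta)^2$-approximation gives $J(\beta) - \beta^2\mathcal{M} = -\int_0^\infty \sin^2 u\,[(u\beta)^2 - \sin^2(u\beta)]\,u^{p-5}\,du$; substituting $v = u\beta$, this equals $-\beta^{4-p}\int_0^\infty \sin^2(v/\beta)(v^2 - \sin^2 v)\,v^{p-5}\,dv$. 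The integrand $(v^2 - \sin^2 v)v^{p-5}$ is nonnegative and integrable on $(0,\infty)$ for $p \in (0,2)$ (it is $O(v^{p-1})$ near $0$ and $O(v^{p-3})$ near $\infty$); so by Riemann--Lebesgue (the weak limit $\sin^2(v/\beta) \rightharpoonup 1/2$),
\[
J(\beta) = \beta^2\mathcal{M} - \tfrac{1}{2}\mathcal{A}\,\beta^{4-p} + o(\beta^{4-p}),\qquad \mathcal{A} := \int_0^\infty (v^2 - \sin^2 v)\,v^{p-5}\,dv > 0.
\]
Taylor-expanding the trigonometric prefactor at $\phi = 0$ yields
\[
\rho_{F_p K}(\theta_\phi) = \rho_0 - C\,\phi^{2-p} + o(\phi^{2-p}),\qquad \rho_0 := \rho_{F_p K}(e_1),\ C := \frac{2\mathcal{A}}{\rho_0^{p-1}} > 0.
\]

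\textbf{Non-convexity from the expansion.} In Cartesian coordinates near $(\rho_0, 0)$, using $x_2 = \rho_{F_p K}(\theta_\phi)\sin\phi \sim \rho_0\phi$, the boundary of $F_p K$ is locally the graph $x_1 = f(x_2)$ with $f(x_2) = \rho_0 - C\rho_0^{p-2}x_2^{2-p} - x_2^2/(2\rho_0) + o(x_2^{2-p})$. Since $(2-p)(p-1) > 0$ for $p \in (1,2)$,
\[
f''(x_2) = C\rho_0^{p-2}(2-p)(p-1)\,x_2^{-p} + O(1) \xrightarrow{x_2 \to 0^+} +\infty,
\]
so $f$ is strictly convex on some interval $(0,\delta)$. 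Thus $F_p K$ is locally the hypograph of a strictly convex function, which is not convex: taking $P = (\rho_0,0)$ and $Q = (f(\epsilon),\epsilon)$ in $F_p K$ for small $\epsilon > 0$, the midpoint has first coordinate $\tfrac{1}{2}(f(0) + f(\epsilon)) > f(\epsilon/2)$ by strict convexity, hence lies outside $F_p K$. This contradicts the assumed convexity and, combined with the reduction, completes the proof. The main obstacle is the Riemann--Lebesgue step producing the coefficient $\mathcal{A}$: one must supply a $\beta$-uniform $L^1$-dominant for $\sin^2(v/\beta)(v^2 - \sin^2 v)v^{p-5}$, which follows from the pointwise bound $\sin^2 \leq 1$ together with the integrability observations above, allowing passage to the weak limit on arbitrary weights.
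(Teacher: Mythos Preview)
Your reduction to $n=2$ and the strategy of detecting a $\phi^{2-p}$ singularity in the radial function near the axis match the paper's approach exactly. The asymptotic $J(\beta)=\beta^2\mathcal{M}-\tfrac12\mathcal{A}\beta^{4-p}+o(\beta^{4-p})$ via Riemann--Lebesgue is correct, and the resulting expansion $f(x_2)=\rho_0-C\rho_0^{p-2}x_2^{2-p}+o(x_2^{2-p})$ is valid.

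The gap is in the passage to $f''$. An asymptotic with $o(x_2^{2-p})$ error cannot be differentiated twice to yield $f''(x_2)=\text{(positive)}\,x_2^{-p}+O(1)$; indeed, your remainder is $R(\beta)=\tfrac12\beta^{4-p}\int_0^\infty\cos(2v/\beta)\,(v^2-\sin^2 v)v^{p-5}\,dv$, and for $p\in(1,2)$ the weight $v(v^2-\sin^2 v)v^{p-5}\sim v^{p-2}$ is not integrable at infinity, so $R'$ is not controlled by the same mechanism. The paper avoids this by computing the integral in closed form (their Lemma~\ref{l:RF} gives $\int_0^\infty\sin^2(x_1r)\sin^2(x_2r)r^{p-5}dr$ as an explicit linear combination of $(x_1\pm x_2)^{4-p}$ and $x_i^{4-p}$), so the error in the expansion of $r(\theta)=\rho(\theta)^p/(8d_p)$ is an explicit $O(\theta^2)$ smooth function whose derivatives are genuinely $O(\theta)$ and $O(1)$.

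Your argument is easily repaired without $f''$: the midpoint test you invoke at the end can be verified directly from the expansion of $f$, since
\[
\tfrac12\bigl(f(0)+f(\epsilon)\bigr)-f(\epsilon/2)
= C\rho_0^{p-2}\bigl(2^{p-2}-\tfrac12\bigr)\epsilon^{2-p}+o(\epsilon^{2-p}),
\]
and $2^{p-2}>\tfrac12$ for $p>1$. This gives the non-convexity immediately, with no differentiation of the remainder needed.
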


\begin{figure}[H]
\centering
\begin{subfigure}[b]{0.49\textwidth}
    \centering
    \includegraphics[width=\textwidth]{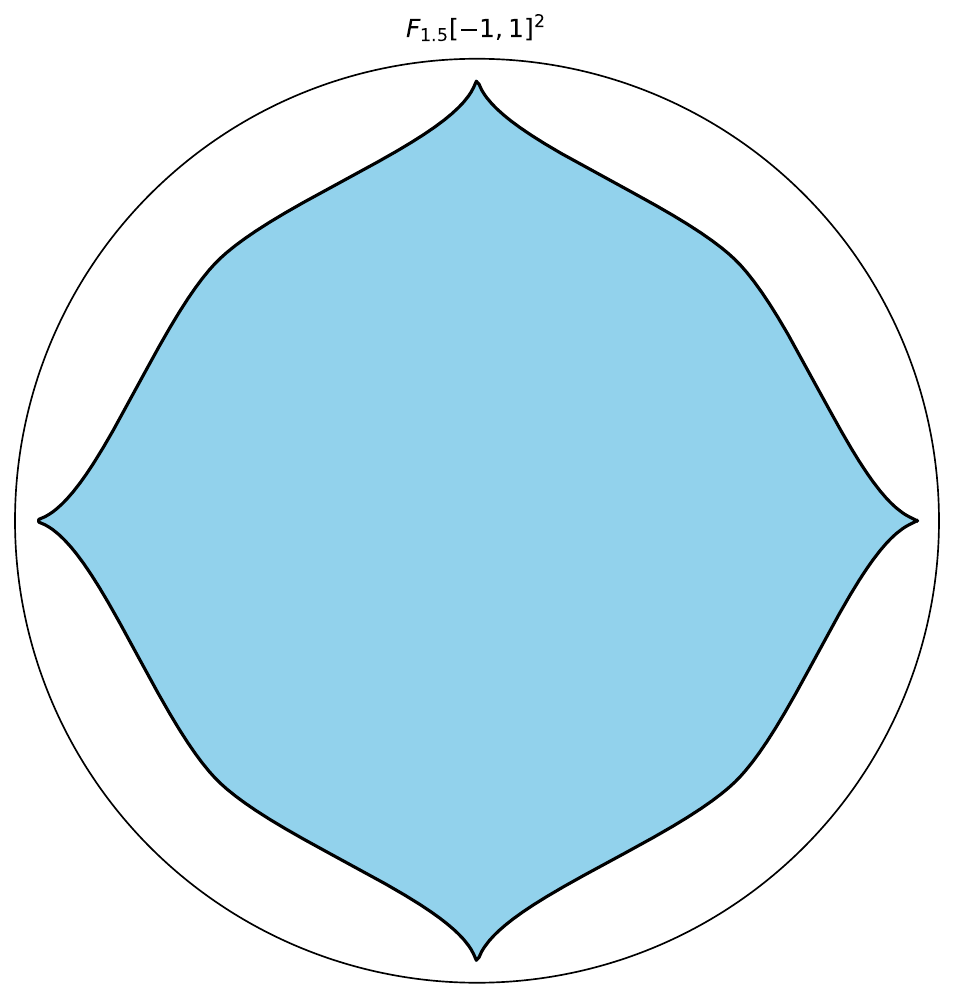}
    \label{fig:F1.5}
  \end{subfigure}
  \hfill
  \begin{subfigure}[b]{0.49\textwidth}
    \centering
    \includegraphics[width=\textwidth]{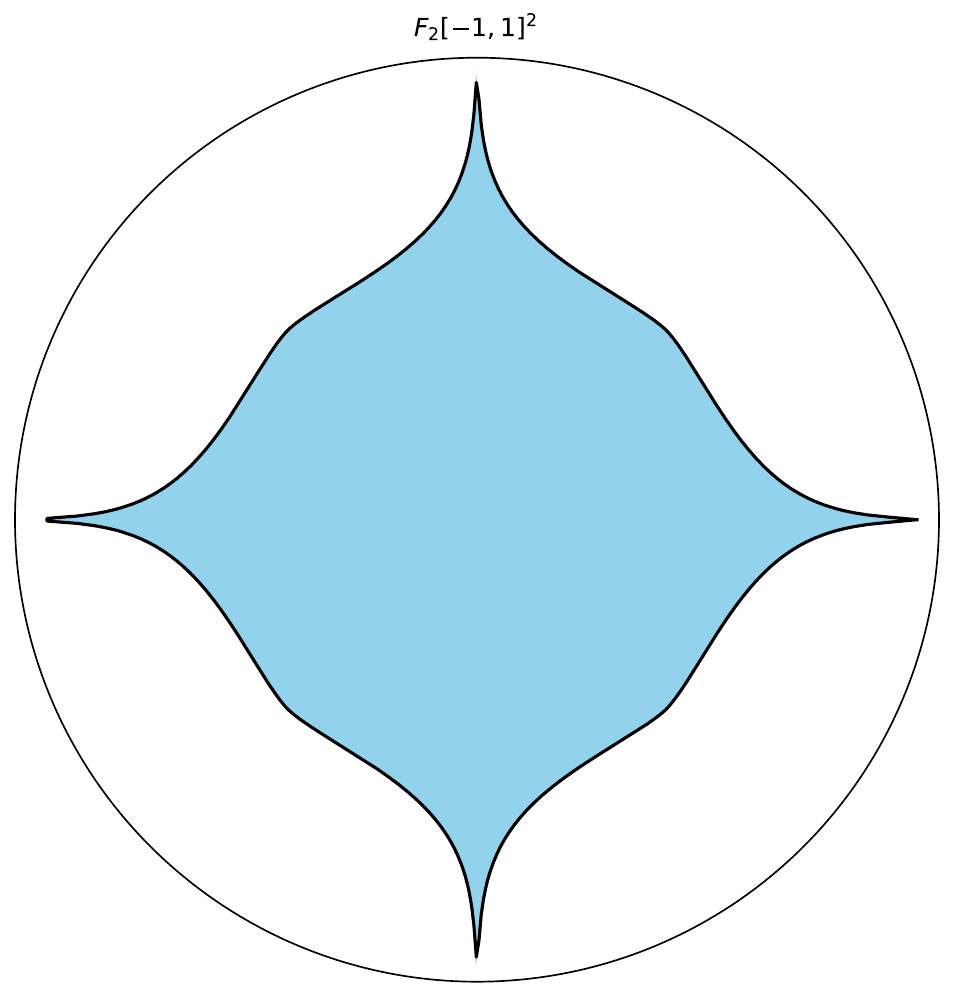}
    \label{fig:F2}
  \end{subfigure}
\caption{The body $F_p K$ when $p=1.5$ and $p=2$; $K=[-1,1]^2$. Not Convex.}
  \label{fig:F_p_not_convex}
\end{figure}

In Lemma~\ref{l:F_1K_is_convex}, we saw that $F_1 K$ is a dilate of the intersection body of $R_{n-1} K$. We recall in \eqref{eq:R_pinvariance} that $R_{n-1} K$ is linear in $K$ with respect to linear transformations; thus, one can always apply a linear transformation to $K$ so that $R_{n-1} K$ is isotropic. Supposing that this has been done, we have by Hensley's theorem \eqref{eq:hen} that $\rho_{F_{1} K}$ is bounded from above and below by $L_{R_{n-1}K}^{-1}$, up to absolute constants. Motivated by this observation, we prove the following.
\begin{thm}
\label{t:use_slicing}
    Let $K\subset\R^n$ be an isotropic convex body. Then, there exist absolute constants $c_1,c_2, c_3>0$ such that for every $p\in (0,1]$ and $\theta\in\s^{n-1}$, $$c_3\cdot c_1^{\frac{1}{p}}\cdot L_K^{-1}<\rho_{F_pK}(\theta) <c_3\cdot c_2^{\frac{1}{p}}\cdot L_K^{-1}.$$
\end{thm}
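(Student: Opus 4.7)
My plan is to reduce to the case $p=1$ via the chain of set-inclusions already established for the bodies $F_p K$, and then apply Lemma~\ref{l:F_1K_is_convex} together with Hensley's theorem to that single case. First I would note that, since $K$ is isotropic, $\vol_n(K)=1$, so Proposition~\ref{p:mono}(2) collapses to the one-sided inclusion $F_p K \subset F_1 K$ for every $p \in (0,1]$. For the opposite inclusion, I would invoke Corollary~\ref{c:F_set_inclusions} with $q=1$, yielding $\lambda(1) F_1 K \subseteq \lambda(p) F_p K$. Passing to radial functions, this sandwiches $\rho_{F_p K}(\theta)$ between $(\lambda(1)/\lambda(p))\rho_{F_1 K}(\theta)$ and $\rho_{F_1 K}(\theta)$. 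A short analysis of $\lambda(1)/\lambda(p)$, using the explicit form of $\lambda(p)$ together with the elementary asymptotics of $\Gamma(p+1)\cos(\pi p/2)$ and $\binom{2n-p}{2n}$ near $p=0$, shows that the ratio is bounded between two quantities of the form $c^{1/p}$ for absolute constants $c\in(0,\infty)$; this is the source of the factors $c_1^{1/p}$ and $c_2^{1/p}$ in the conclusion.

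Next, for the $p=1$ case, Lemma~\ref{l:F_1K_is_convex} gives $\rho_{F_1 K}(\theta) = \pi \vol_{n-1}(R_{n-1}K \cap \theta^\perp)$. Since $R_{n-1}K$ is an origin-symmetric convex body (by Theorem~\ref{t:gardner_zhang} together with the evenness of $g_K$), I would apply the general form of Hensley's theorem, which asserts that for any origin-symmetric convex body $L\subset\R^n$ and $\theta\in\s^{n-1}$,
\[
\vol_{n-1}(L\cap\theta^\perp) \;\asymp\; \frac{\vol_n(L)^{3/2}}{\left(\int_L \langle x,\theta\rangle^2\,dx\right)^{1/2}},
\]
with absolute constants. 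Thus the task is to estimate both $\vol_n(R_{n-1}K)$ and the directional second moment $\int_{R_{n-1}K}\langle x,\theta\rangle^2\,dx$ in terms of $L_K$.

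To carry out these estimates, I would insert the covariogram representation of $\rho_{R_{n-1}K}^{n-1}$ from \eqref{eq:radial_ell} into the polar-coordinate formulas for volume and second moment, and then use Fubini in direction $\theta$ to reduce both quantities to one-dimensional integrals against the slicing function $A(t)=\vol_{n-1}(K\cap(t\theta+\theta^\perp))$. This function is even and log-concave, with $\int A\,dt=1$ and $\int t^2 A\,dt=L_K^2$ by isotropy, and standard Ball/Hensley bounds give $A(0)\asymp L_K^{-1}$. The main obstacle will be balancing the three-halves and one-half exponents in the generalized Hensley inequality against the correct scaling of $\vol_n(R_{n-1}K)$ and of the directional moment, so as to extract the precise power $L_K^{-2/3}$ (rather than a naive $L_K^{-1}$) while keeping the resulting constants absolute and $n$-independent. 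Once this balance is pinned down, both the upper and lower bounds follow from the same log-concavity framework, and combining with the reduction from the first step completes the proof.
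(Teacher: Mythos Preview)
Your reduction to the case $p=1$ via Proposition~\ref{p:mono} and Corollary~\ref{c:F_set_inclusions} is sound; after simplifying $\lambda(1)/\lambda(p)$ using the reflection formula one sees that the $n$--dependence in $\binom{2n-p}{2n}^{1/p}$ and in $\lambda(1)=\tfrac{1}{\pi n}$ cancel, leaving ratios bounded between absolute constants (times factors of the form $c^{1/p}$). The paper does not argue this way---it treats all $p\in(0,1)$ simultaneously---but your first step is a legitimate alternative.

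The gap is in your second step. You want to apply the general Hensley inequality to $R_{n-1}K$ and then reduce $\vol_n(R_{n-1}K)$ and $\int_{R_{n-1}K}\langle x,\theta\rangle^2\,dx$ to one-dimensional integrals against $A_{K,\theta}$ via the covariogram formula \eqref{eq:radial_ell}. This does not work: \eqref{eq:radial_ell} gives $\rho_{R_{n-1}K}^{\,n-1}$ as an integral of $g_K$, but the volume needs $\rho^{\,n}$ and the directional second moment needs $\rho^{\,n+2}$, so neither quantity can be expressed as an integral of $g_K$ (equivalently, of $A_{K,\theta}$) against a power weight. Proposition~\ref{p:ball_homo_formula} makes this mismatch explicit: it only converts integrals over $R_pK$ of functions homogeneous of degree $p-n$, and $\langle x,\theta\rangle^2$ is of degree $2\neq -1$. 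In fact, what you would need---that $\int_{R_{n-1}K}\langle x,\theta\rangle^2\,dx$ is uniform in $\theta$ with dimension-free constants whenever $K$ is isotropic---is essentially equivalent to the $p=1$ case of the theorem itself, so invoking it begs the question. The paper anticipates exactly this obstruction: Section~3.3 computes $d_{\mathrm{BM}}(R_{n-1}Q_n,B_2^n)\approx\sqrt{n}$ and remarks that Theorem~\ref{t:use_slicing} ``does not follow immediately from Theorem~\ref{t:fourier_small_p},'' i.e., not from $F_1K=\pi I(R_{n-1}K)$ plus Hensley on $R_{n-1}K$.

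The paper's route is genuinely different. Using \eqref{eq:parellel_convol_2} it writes $\rho_{F_pK}^p(\theta)$ as an integral of $A_{K\times K,\xi}$ with $\xi=(\theta/\sqrt2,\theta/\sqrt2)$, integrates by parts to produce volumes of slabs of $K\times K$, and then passes to $K'=K\times K\times[-a,a]\subset\R^{2n+1}$ with $a=(3L_K^2/2)^{1/3}$. This particular choice of $a$ forces the (volume-normalized) $K'$ to be isotropic, so Hensley in $\R^{2n+1}$ gives $\vol_{2n}(K'\cap\nu^\perp)\asymp 1$ with absolute constants for every $\nu\in\s^{2n}$; the remaining one-dimensional integral $\int_0^\infty t^{-p}(t^2+a^2)^{-1/2}\,dt$ contributes the factor $a^{-p}\asymp L_K^{-2p/3}$, and the exponent $-2/3$ emerges from the choice of $a$, not from a balance of Hensley exponents on $R_{n-1}K$. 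Your plan lacks an analogue of this ``lift one dimension and choose $a$'' step, which is where the $L_K^{-2/3}$ actually comes from.
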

The (sharp) values of $c_1,c_2$ and $c_3$ are in Theorem~\ref{t:slicing_2} below. Using the recent resolution of the hyperplane slicing conjecture by B. Klartag and J. Lehec \cite{KL25}, we now know that $L_K$ can be controlled from above and below by absolute constants when $K$ is isotropic (they considered the origin-symmetric case; see \cite{BK05} for the reduction to the general case). Therefore, Theorem~\ref{t:use_slicing} implies that, there are two absolute constants $C_1,C_2>0$ such that  $$C_1c_1^{\frac{1}{p}} <\rho_{F_pK} <C_2c_2^{\frac{1}{p}}, \quad \forall \, \, p\in (0,1].$$

\subsection{Affine isoperimetric-type inequalities and set inclusions}
Our investigation would not be complete without considering the relevant inequalities and set-inclusions for the sets we consider.
\subsubsection{Radial Mean Bodies}
First, we recall what is known about $R_pK$ for a convex body $K$. The function $p\mapsto R_p K$ is continuous in the Hausdorff metric for $p>-1$, and, moreover, they are monotonic with respect to set-inclusion: for $-1<p<q<\infty,$
\begin{equation}
\label{eq:radial_growing}
\{o\}=R_{-1}K \subset R_p K \subset R_q K \subset R_\infty K = DK.
\end{equation}
R. Gardner and G. Zhang established \cite[Theorem 5.5]{GZ98} reverse affine isoperimetric inequalities for the radial $p$th mean bodies $R_p K$, with extremizers being simplexes. Their result can be stated as the following set-inclusions: 
if $-1<p<q<\infty$,
\begin{equation}
    DK\subseteq  \binom{n+q}{n}^\frac{1}{q}R_q K \subseteq  \binom{n+p}{n}^\frac{1}{p}R_p K\subseteq n\vol_n(K)\Pi^\circ K,
    \label{eq:radial_set_inclusion}
\end{equation}
with equality in any, and hence all, set inclusions if and only if $K$ is an $n$-dimensional simplex. Here, the polar projection body $\Pi^\circ K$ of $K$ is the origin-symmetric convex body given by the radial function $$\rho_{\Pi^\circ K}(\theta)=\vol_{n-1}(P_{\theta^\perp} K)^{-1},$$ where $P_{\theta^\perp} K$ is the orthogonal-projection of $K$ onto $\theta^\perp$. Letting $H_n=\sum_{j=1}^n\frac{1}{j}$ be the $n$th harmonic number and $\Gamma(z)=\int_0^\infty e^{-t}t^{z-1}dt$ the usual Gamma function, the coefficients are then 
$$\binom{n+p}{n}^\frac{1}{p}=\begin{cases}\left(\frac{\Gamma(n+p+1)}{\Gamma(n+1)\Gamma(p+1)}\right)^\frac{1}{p}, & p\neq 0,
\\
e^{H_n}, & p=0.
\end{cases}$$ 

More recently, J. Haddad and M. Ludwig \cite{HL26} established the following affine isoperimetric-type inequalities for the bodies $R_p K$, opposite \eqref{eq:radial_set_inclusion}. We note that the set $R_p B_2^n$ is a particular dilate of the unit Euclidean ball $\B$; see \eqref{eq:R_pB_2^n} below.
\begin{theorem}
\label{t:radial_isos}
    Let $p>-1$ and let $K\subset\R^n$ be a convex body. Then, $\vol_n(R_n K)=\vol_n(K)$ and, for $p\neq n$,
\begin{align}
\label{eq:iso_R_pK}
\frac{\vol_n(R_pK)}{\vol_n(K)} &\leq \frac{\vol_n(R_pB_2^n)}{\vol_n(B_2^n)}, \quad -1<p<n.
\\
\frac{\vol_n(R_pK)}{\vol_n(K)} &\geq \frac{\vol_n(R_pB_2^n)}{\vol_n(B_2^n)}, \quad p>n.
\label{eq:iso_R_pK_two}
\end{align}
For $p\neq n$, there is equality if and only if $K$ is an ellipsoid.
\end{theorem}

\subsubsection{Polar Mean Zonoids}
The bodies $Z_p^\circ K$ enjoy a monotonicity similar to \eqref{eq:radial_growing}. The convex body $(DK)^\circ$, the polar of $DK$, is defined by the radial function $\rho_{(DK)^\circ} = \left(\max_{x\in DK} |\langle x, \cdot \rangle|\right)^{-1}$.

\begin{prop}
\label{p:Z_sets}
Let $K\subset \R^n$ be a convex body. Then, for $-1<p<q<\infty$, we have
    \begin{equation}
    \label{eq:Z_circ_set}
    (DK)^\circ = Z_{\infty}^\circ K \subset Z_q^\circ K \subset Z_p^\circ K \subset Z_{-1}^\circ K = \R^n.
    \end{equation}
    Additionally, the map $p\mapsto Z_p^\circ K$ is continuous in the Hausdorff metric for $p>-1$.
\end{prop}
\noindent The above Proposition~\ref{p:Z_sets} was previously explored by R. Gardner and A. Giannopoulos \cite{GG99}, under their normalization. Next, we have the relevant affine isoperimetric-type inequalities for $Z_p^\circ K$.
\begin{thm}
\label{t:polar_BS}
    Let $K$ be a convex body in $\R^n$ and let $$p\in (0,\infty)\cup \left\{ p\in (-1,0):n/|p| \in \N\right\}.$$ Then,
$$\vol_n(K)\vol_n(Z^\circ_pK) \leq \vol_n(B_2^n)\vol_n(Z^\circ_pB_2^n),$$
with equality if and only if $K$ is an ellipsoid.
\end{thm}
\noindent The inequality in Theorem~\ref{t:polar_BS} holds for $p=0$ as well by taking the limit in $p$; we carefully define $Z_0^\circ K$ in \eqref{eq:Z_o_circ} below. It is natural to ask if Theorem~\ref{t:polar_BS} can hold for all $p>-1$. Our proof relies on the $L^p$-Blaschke-Santal\'o inequality. This inequality was proven by E. Lutwak and G.~Zhang \cite{LZ97} for $p\geq 1$, and we recall it below in \eqref{eq:LZ}. In terms of extensions to larger ranges of $p$, the current state-of-the-art, as shown by R. Adamczak, G. Paouris, P. Pivovarov, and P. Simanjuntak \cite{APPS24}, is for the range of $p$ mentioned in Theorem~\ref{t:polar_BS}. An extension of \eqref{eq:LZ} to $p>-1$ would extend Theorem~\ref{t:polar_BS} to the same range.

We establish the reverse direction of set-inclusions as well.
\begin{thm}
\label{t:z_opposite_chain}
    Let $K\subset \R^n$ be a convex body. Then, for $-1<p<q<\infty$, we have $$\kappa(p)Z_{p}^\circ K \subseteq \kappa(q)Z_{q}^\circ K\subseteq (DK)^\circ, \quad \text{where }\kappa(p) = \begin{cases}
        \dbinom{2n+p}{p}^{-\frac{1}{p}}, & p\neq 0,
        \\
        e^{-H_{2n}} , & p=0.
    \end{cases}$$ 
\end{thm}

The set-inclusions \eqref{eq:radial_set_inclusion} were proven using Berwald's inequality \cite{Berlem}. To establish Theorem~\ref{t:z_opposite_chain}, we make use of a weighted version of Berwald's inequality, see Proposition~\ref{p:berwald}. From the equality case of this weighted Berwald's inequality, there is never equality in Theorem~\ref{t:z_opposite_chain} (unless $n=1$, in which case there is always equality). In addition to this functional argument, we provide a deeper, geometric reason in Remark~\ref{r:no_equality} to illustrate why equality cannot occur.

\subsubsection{Fourier Mean Bodies}
As an extra by-product of our methods, we can obtain a reversal of Proposition~\ref{p:mono} in the range where $F_p K$ is a convex body. Note that the value of the coefficient at $q=1$ is obtained by taking the limit.
\begin{cor}
\label{c:F_set_inclusions}
    Let $0<p<q\leq 1$ and let $K\subset \R^n$ be a convex body. Then,
\[
\lambda(q)\vol_n(K)^{-\frac{1}{q}} F_q K \subseteq \lambda(p) \vol_n(K)^{-\frac{1}{p}}F_p K,
\]
where
\[
\lambda(p) =\begin{cases}
    \left(\frac{\binom{2n-p}{2n}}{\Gamma\left(p+1\right)\cos\left(\frac{\pi p}{2}\right)}\right)^\frac{1}{p}, & p\in (0,1),
    \\
     \frac{1}{\pi n}, & p=1.
\end{cases}  
\]
\end{cor}
\noindent By building on Corollary~\ref{c:F_set_inclusions}, we obtain formal reversal of \eqref{eq:growing_sets} in the following proposition.
\begin{prop}
\label{p:decreasing_sets}
    Let $0<p<q\leq 1$ and $K\subset \R^n$ a convex body. Then, 
    \begin{equation}\frac{2}{\pi}\vol_n(K)^{\frac{1}{p}-\frac{1}{q}} F_q K\subset F_p K \subset \vol_n(K)^{\frac{1}{p}-\frac{1}{q}} F_q K.
    \label{eq:decreasing_sets}
    \end{equation} 
\end{prop}

For the sake of completeness, we wish to establish affine isoperimetric inequalities for the Fourier mean bodies. However, directly deriving sharp volume bounds for $F_p K$ via the Fourier transform in Definition~\ref{def:F_pK} presents formidable technical obstacles. To bypass this, we leverage a relationship uncovered in Lemma~\ref{l:F_p<1} below to map the problem into the setting of Theorem~\ref{t:polar_BS} when $0<p<1$. This requires restrictions on $p$; the full range of $p\in (0,1)$ requires future extensions of \eqref{eq:LZ}. Finally, the case $p=1$ utilizes the Busemann intersection inequality.

    \begin{thm}
    \label{t:Fpk_affine_iso}
        Let $K\subset\R^n$ be a convex body and $p\in (0,1]$ be such that $n/p\in \N$. Then,
        \begin{equation}
        \label{eq:F_p_affine}
        \vol_n(K)^\frac{p-n}{p}\vol_n(F_pK) \leq \vol_n(\B)^\frac{p-n}{p}\vol_n(F_p\B).
        \end{equation}
        There is equality if and only if $K$ is an ellipsoid.      
    \end{thm}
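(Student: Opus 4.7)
The plan is to split the proof into two cases based on whether $p\in (0,1)$ or $p=1$, reducing each to an affine isoperimetric inequality already appearing earlier in the paper, supplemented by Busemann's classical intersection inequality in the endpoint case.

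First, consider $p\in (0,1)$ with $n/p\in\N$. Lemma~\ref{l:F_p<1} gives the exact identification
\[
F_pK \;=\; \bigl(\Gamma(1+p)\cos(\tfrac{\pi p}{2})\vol_n(K)\bigr)^{1/p}Z^\circ_{-p}K,
\]
so taking $n$-dimensional volumes and multiplying by $\vol_n(K)^{(p-n)/p}$ collapses the powers of $\vol_n(K)$ (since $(p-n)/p + n/p = 1$) and yields
\[
\vol_n(K)^{\frac{p-n}{p}}\vol_n(F_pK) \;=\; \bigl(\Gamma(1+p)\cos(\tfrac{\pi p}{2})\bigr)^{n/p}\,\vol_n(K)\,\vol_n(Z^\circ_{-p}K).
\]
Since $-p\in(-1,0)$ satisfies $n/|{-p}|=n/p\in\N$, Theorem~\ref{t:polar_BS} applies with parameter $-p$ and gives $\vol_n(K)\vol_n(Z^\circ_{-p}K)\le \vol_n(\B)\vol_n(Z^\circ_{-p}\B)$, with equality iff $K$ is an ellipsoid. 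Substituting $K=\B$ into the displayed identity recasts the right-hand side back as $\vol_n(\B)^{(p-n)/p}\vol_n(F_p\B)$, giving \eqref{eq:F_p_affine} and the stated equality case in one stroke.

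For $p=1$, the hypothesis $n/p\in\N$ is automatic. Lemma~\ref{l:F_1K_is_convex} identifies $F_1K=\pi\,I(R_{n-1}K)$, reducing \eqref{eq:F_p_affine} to
\[
\frac{\vol_n(I(R_{n-1}K))}{\vol_n(K)^{n-1}} \;\le\; \frac{\vol_n(I(R_{n-1}\B))}{\vol_n(\B)^{n-1}}.
\]
Since $R_{n-1}K$ is an origin-symmetric convex body by Theorem~\ref{t:gardner_zhang}, I would apply Busemann's intersection inequality to obtain $\vol_n(I(R_{n-1}K))\le c_n\vol_n(R_{n-1}K)^{n-1}$, where $c_n=\vol_n(I\B)/\vol_n(\B)^{n-1}$, with equality iff $R_{n-1}K$ is a centered ellipsoid. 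Chaining this with the Haddad--Ludwig inequality \eqref{eq:iso_R_pK} applied at $n-1<n$ produces the required upper bound, and plugging in $K=\B$ (where both inequalities become equalities, as $R_{n-1}\B$ is a ball) identifies the constant as exactly $\vol_n(F_1\B)/\vol_n(\B)^{n-1}$.

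The delicate point is the simultaneous equality analysis at $p=1$: equality in the chain requires equality in both Busemann and Haddad--Ludwig, and the latter forces $K$ to be an ellipsoid. By affine covariance of the radial mean body operator (cf.\ \eqref{eq:R_pinvariance}), $R_{n-1}K$ is then automatically an ellipsoid, which yields equality in Busemann as well; conversely, any ellipsoid $K$ achieves equality in both steps. The $p\in(0,1)$ case is by contrast quite direct, since Lemma~\ref{l:F_p<1} reduces it to Theorem~\ref{t:polar_BS} as a change of normalization; the arithmetic condition $n/p\in\N$ is precisely what keeps us inside the range of validity of that inequality.
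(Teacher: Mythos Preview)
Your proposal is correct and follows essentially the same approach as the paper: the case $p\in(0,1)$ is reduced via Lemma~\ref{l:F_p<1} to Theorem~\ref{t:polar_BS} at parameter $-p$, and the case $p=1$ is handled by chaining Busemann's intersection inequality with the Haddad--Ludwig inequality \eqref{eq:iso_R_pK} applied to $R_{n-1}K$. Your equality analysis at $p=1$ is also the right one, and slightly more detailed than what the paper spells out.
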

    We do not know if \eqref{eq:F_p_affine} can hold for any $p >1$. Nevertheless, if we replace volume by another object, we can obtain a sharp inequality for the full range of $p$. E. Lutwak had introduced in \cite{Lut75} the $q$th dual Querma{\ss}integral, which were expanded upon by R. Gardner \cite{RJG07} and E. Lutwak, D. Yang and G. Zhang \cite{LYZ07}: for $p\in \R$ and a $L^p$-star $D\subset\R^n$, one has
    \begin{equation}
        \widetilde W_{n-p}(D) := \frac{1}{n}\int_{\s^{n-1}} \rho_D^{p}(\theta)d\theta.
        \label{eq:dual_quer}
    \end{equation}
    Recall that $K, L\subset \R^n$ are homothetic if $K=aL+b$ for some $a>0$ and $b\in\R^n$. The next theorem is an isoperimetric inequality for dual querma{\ss}integrals of Fourier mean bodies:

     \begin{thm}
    \label{t:iso_dual_fourier}
    Let $K\subset \R^n$  be a convex body and let $p\in (0,\min\{p(K),n\})$, where $p(K)$ is from Theorem~\ref{t:compactness}. Then,
    \begin{equation}
         \vol_n(K)^{\frac{p-n}{p}} \widetilde W_{n-p}(F_p K)^\frac{n}{p} \leq \vol_n(F_p \B),
          \label{eq:F_p_affine_dual}
    \end{equation}
    with equality if and only if $K$ is homothetic to $B_2^n$.
\end{thm}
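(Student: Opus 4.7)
\medskip

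\textbf{Plan.} I would rewrite $\widetilde W_{n-p}(F_p K)$ as a Riesz energy of $K$ and then invoke the Riesz rearrangement inequality. Substituting \eqref{eq:fourier_body} into \eqref{eq:dual_quer} and passing to polar coordinates $x=r\theta$ gives
\[
n\,\vol_n(K)\,\widetilde W_{n-p}(F_p K)
\;=\;p\int_{\R^n}\bigl|\widehat{\chi_K}(x)\bigr|^2\,|x|^{p-n}\,dx.
\]
Since $g_K=\chi_K\ast\chi_{-K}$, one has $\bigl|\widehat{\chi_K}\bigr|^2=\widehat{g_K}$. Combining the distributional Plancherel identity with the classical Fourier formula $\widehat{|x|^{p-n}}=c_{n,p}|x|^{-p}$, valid for $0<p<n$ with $c_{n,p}=2^{p}\pi^{n/2}\Gamma(p/2)/\Gamma((n-p)/2)>0$, and unfolding $g_K$ via Fubini, I obtain
\[
\widetilde W_{n-p}(F_p K)
\;=\;\frac{p\,c_{n,p}}{n\,\vol_n(K)}\int_K\!\int_K |y-z|^{-p}\,dy\,dz.
\]

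Next, I would apply the Riesz rearrangement inequality to $f=g=\chi_K$ and the strictly radially decreasing kernel $h(x)=|x|^{-p}$, which is its own symmetric decreasing rearrangement. Denoting by $K^{\star}$ the centred Euclidean ball with $\vol_n(K^{\star})=\vol_n(K)$, this yields
\[
\int_K\!\int_K |y-z|^{-p}\,dy\,dz\;\le\;\int_{K^{\star}}\!\int_{K^{\star}} |y-z|^{-p}\,dy\,dz.
\]
Since $h$ is strictly decreasing, Lieb's characterization of equality in Riesz rearrangement forces $K$ to be a translate of $K^{\star}$, i.e.\ $K$ is homothetic to $B_2^n$.

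To finish, I would homogenize. The dilation identity $\widehat{\chi_{\lambda K}}(\xi)=\lambda^{n}\widehat{\chi_K}(\lambda\xi)$ implies $F_p(\lambda K)=\lambda^{(n-p)/p}F_p K$ and hence $\widetilde W_{n-p}(F_p(\lambda K))=\lambda^{n-p}\widetilde W_{n-p}(F_p K)$. Writing $K^{\star}=rB_2^n$ with $r^{n}=\vol_n(K)/\vol_n(B_2^n)$, the previous two steps combine to
\[
\widetilde W_{n-p}(F_p K)\;\le\;\bigl(\vol_n(K)/\vol_n(B_2^n)\bigr)^{(n-p)/n}\,\widetilde W_{n-p}(F_p B_2^n).
\]
By rotation equivariance of Definition~\ref{def:F_pK}, $F_p B_2^n$ is itself a Euclidean ball, so the polar coordinate formula gives $\widetilde W_{n-p}(F_p B_2^n)^{n/p}=\vol_n(B_2^n)^{(n-p)/p}\vol_n(F_p B_2^n)$. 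Raising the previous display to the power $n/p$ then produces \eqref{eq:F_p_affine_dual}, and the equality case is inherited directly from the Riesz rearrangement step. The main obstacle is the rigorous justification of the Plancherel manipulation in the first step: integrability of $|\widehat{\chi_K}|^2|x|^{p-n}$ on $\R^n$ is exactly where the hypothesis $p<p(K)$ enters (to control the tail at infinity), while the Riesz-energy side is automatically finite for $0<p<n$; a regularization by smooth truncation of the kernel $|x|^{p-n}$ should allow one to move freely between the two expressions as tempered distributions.
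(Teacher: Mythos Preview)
Your proposal is correct and follows essentially the same route as the paper: both rewrite $\widetilde W_{n-p}(F_p K)$ as $\frac{p}{n\vol_n(K)}\int_{\R^n}|x|^{p-n}\widehat{g_K}(x)\,dx$, pass the Fourier transform onto the kernel to obtain a constant times $\int_{\R^n}|x|^{-p}g_K(x)\,dx$, and then apply the Riesz rearrangement inequality (with Burchard's equality characterization) to compare with $K^\star$. The only real difference is in the justification of the Parseval step---the paper mollifies $g_K$ by a Schwartz approximate identity and uses the hypothesis $p<p(K)$ to dominate $|\widehat{\varphi_j}(x)|\,|x|^{p-n}$ on $\R^n\setminus B_2^n$, whereas you propose to regularize the kernel $|x|^{p-n}$ instead; either works.
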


This paper is organized as follows. In Section~\ref{sec:fourier_facts}, we mention pertinent facts from Fourier analysis. Section~\ref{sec:geometry} is dedicated to the geometry of radial $p$th mean bodies. We further divide this section into two parts. In Section~\ref{sec:radial_def}, we lay bare known facts about $R_p K$ from the foundational work by R. Gardner and G. Zhang \cite{GZ98}. Along the way, we derive new formulas for the integrals of homogeneous functions over $R_p K$. 

In Section~\ref{sec:fourier_of_RpK}, our main line of investigation begins. In particular, we prove Theorem~\ref{t:polar_BS} and Theorem~\ref{t:p_n_+_1}, which concern large $p$ relative to dimension. Having introduced the Fourier $p$th mean bodies $F_p K$, in Section~\ref{sec:foundational} we study their geometric properties and prove Theorem~\ref{t:fourier_small_p} and Theorem~\ref{t:compactness}. In Section~\ref{sec:convex}, we show that $F_p K$ is convex for $p\in (0,1]$ by proving Lemmas~\ref{l:F_p<1} and ~\ref{l:F_1K_is_convex}. In Section~\ref{sec:isotropic}, we show Theorem~\ref{t:use_slicing}. In Section~\ref{sec:F_pisos}, we prove Theorem~\ref{t:Fpk_affine_iso} and Theorem~\ref{t:iso_dual_fourier}. We supply in Section~\ref{sec:non} the proof of the fact that $F_p[-1,1]^n$ is not convex when $p>1$. Finally, in Section~\ref{sec:open}, we list some open questions. Among them, the reader will find the Fourier characterization of the radial mean bodies when $p\in (-1,0)$. 

\section{Preparatory Facts}
We introduce the notation, for $q>-2$, $\omega_q=\frac{\pi^\frac{q}{2}}{\Gamma(1+\frac{q}{2})}$. Thus, $\vol_n(\B)=\omega_n.$ We will also use the digamma function $\psi(z)=(d/dz) \log \Gamma(z)$.
\subsection{Preliminaries}
\label{sec:fourier_facts}
We say a function $\phi$ is a \textit{test function}, or belongs to the Schwartz class, and write $\phi\in\mathcal{S}(\R^n)$, if $\phi \in C^\infty(\R^n)\cap L^1(\R^n)$ and if $\phi$ along with all its derivatives converge to zero at infinity  faster than any power of the $\ell_2$-norm, i.e. for every $k\in\N$,
$
\sup_{|\alpha|\leq k}\sup_{x\in\R^n}(1+|x|)^k|D^\alpha \phi(x)| < +\infty,
$ where $D^{\alpha}\varphi = \frac{\partial^{|\alpha|}\varphi}{\partial x_1^{\alpha_1}\cdots\partial x_n^{\alpha_n}}$, $|\alpha|=\sum_{i=1}^n\alpha_i$, and $\alpha\in \N^n$ is a multi-index.

The Fourier transform of an integrable function $f:\R^n\to\mathbb{C}$ is the function $\hat f:\R^n\to\mathbb{C}$ given by 
\begin{equation}
\label{eq:fourier}
    \hat f(y) := \int_{\R^n}e^{-i\langle x,y \rangle}f(x)dx.
\end{equation}
For every $\phi \in \mathcal{S}(\R^n)$, one has 
\begin{equation}
\widehat{(\widehat{\phi\,\,\,})}(x)=(2\pi)^n\phi(-x).
\label{eq:inversion}
\end{equation}
 More generally, $\hat f$ may only exist in the sense of \textit{distributions}. A (complex-valued) function $f$ satisfying $f\in L^1_{\operatorname{loc}}(\R^n)$ with power growth at infinity (i.e., there exists a $\beta>0$ such that $\lim_{t\to +\infty}\frac{|f(t\theta)|}{t^\beta}=0$ for almost every $\theta\in\s^{n-1}$) defines a (tempered) distribution via
\[
\varphi\mapsto\langle f,\phi\rangle = \int_{\R^n}f(x)\phi(x)dx, \qquad \varphi\in \mathcal{S}(\R^n).
\]
We also use the symbol $f$ for the distribution defined by $f$. The Fourier transform of a distribution $T$ is defined by Parseval's formula:
\begin{equation}
\label{eq:parseval_1}
    \langle \hat T,\phi \rangle = \langle T,\hat \phi \rangle, \quad \phi\in\mathcal{S}(\R^n).
\end{equation}
The mapping $\phi\mapsto \hat \phi$ is an isomorphism on $\mathcal{S}(\R^n)$ and therefore $\hat T$ is well-defined and $\hat T_1 = \hat T_2$ implies $T_1=T_2$ in the distributional sense. Specializing \eqref{eq:parseval_1} to even $\phi\in\mathcal{S}(\R^n)$, one has from \eqref{eq:inversion} the identity
\begin{equation}
\label{eq:parseval_2}
    \langle \hat T,\hat\phi \rangle = (2\pi)^n\langle T, \phi \rangle
\end{equation}
for every distribution $T$. We say a distribution $T$ is positive-definite if $\langle \hat T,\phi \rangle \geq 0,$ for every non-negative $ \phi\in\mathcal{S}(\R^n).$ 

Let $f$ be an integrable function that is also integrable on every hyperplane, and recall the \textit{Radon transform} of $f$ in the direction $\theta$ at a distance $t$ is given by
\begin{equation}
    \label{eq:og_radon}
\mathcal{R}f(\theta;t)=\int_{\{x\in\R^n:\langle x,\theta\rangle=t\}}f(x)dx.
\end{equation}
It follows from \cite[Lemma 2.11]{AK05} that, for a fixed $\theta\in\s^{n-1}$, we have the identity
\begin{equation}
    \hat{f}(r\theta)=\widehat{\mathcal{R}f(\theta;\cdot)}(r).
    \label{eq:og_radon_fourier}
\end{equation}
Also, the Fourier transform enjoys the following homogeneity property: for $f\in L^1(\R^n)$,
\begin{equation}
    \widehat{f(tx)}(y) = t^{-n}\hat{f}\left(\frac{y}{t}\right), \quad t>0, \;y\in\R^n.
    \label{eq:Fourier_factoring}
\end{equation}
We will utilize the $\mu$th Bessel function, which is given by the Poisson integral identity
\begin{equation}
    J_\mu(z) = \frac{1}{\Gamma\left(\mu+\frac{1}{2}\right)\sqrt{\pi}}\left(\frac{z}{2}\right)^\mu\int_{-1}^{1}e^{-izt}(1-t^2)^{\mu-\frac{1}{2}}dt, \qquad z\in\mathbb{C}, \quad \mu > -\frac{1}{2}.
    \label{eq:Bessel}
\end{equation}
The Bessel function satisfies the integral identity (see, e.g. \cite[Appendix B.3 on pg. 576]{GL14}): for $z>0$,
\begin{equation}
\label{eq:fourier_identity}
    \int_0^1 \!J_\mu(sz) s^{\mu+1}\!\left(1-s^2\right)^\nu d s\!=\!\left(\frac{2}{z}\right)^\nu \frac{\Gamma(\nu+1)J_{\mu+\nu+1}(z)}{z}, \quad \mu,\nu >-\frac{1}{2}.
\end{equation}
The classical asymptotics for the Bessel function when $t\geq 1$ are (see, e.g. \cite[Section B.8 on pg. 580]{GL14}):
\begin{equation}
J_\mu(t) = \sqrt{\frac{2}{\pi t}}\cos\left(t-\frac{\pi \mu}{2}\!-\!\frac{\pi}{4}\right)+\mathcal{O}(t^{-\frac{3}{2}}), \, \text{and, thus}\, |J_\mu(t)| \lesssim t^{-\frac{1}{2}}\, \text{ as }\, t\to \infty.
\label{eq:Bessel_asym_high}
\end{equation}
  On the other hand, as $t\to 0^+$, we have (see. e.g. \cite[Appendix B.6 on page 578]{GL14}):
  \begin{equation}
    J_\mu(t)=\frac{t^\mu}{2^\mu\Gamma\left(\mu+1\right)} +\mathcal{O}(t^{\mu+1}), \quad \text{and, \;thus} \qquad|J_\mu(t)| \lesssim t^\mu \quad \text{as} \quad t\to 0^+.
    \label{eq:Bessel_asym}
    \end{equation}
    Using these rates, we deduce that the Mellin transform of $J_\mu^2$ exists pointwise in the following sharp ranges:
    \begin{equation}
        \label{eq:Bessel_mellin_2}
        \int_0^\infty J_\mu^2(t)t^{\nu-1} dt = \frac{\Gamma\left(1-\nu\right)}{2^{1-\nu}\Gamma\left(1-\frac{\nu}{2}\right)^2}\frac{\Gamma\left(\mu+\frac{\nu}{2}\right)}{\Gamma\left(\mu+1-\frac{\nu}{2}\right)}, \qquad -\frac{1}{2}<-\frac{\nu}{2}<\mu.
    \end{equation}
    The formula for this integral can be found in the seminal treatise by G. N. Watson \cite[Chapter 13, Section 41, eq. 2, pg. 403]{WGN44}. For a modern reference, see, e.g. \cite[Chapter 6.574, eq. 2, p. 691]{GR15}.    

    As an application of the theory of Bessel functions, we may write the Fourier transform of rotational invariant functions, from, say, \cite[Appendix B.5, pg. 577]{GL14}:
    Let $\phi$ be an integrable function on $\R$ such that $f(x)=\phi(|x|)$ is in $L^1(\R^n)$. Then,
    \begin{equation}
    \label{eq:stein}
    \hat f(x) = |x| \int_0^\infty \phi(t)J_{\frac{n-2}{2}}(|x|t)\left(\frac{2\pi t}{|x|}\right)^\frac{n}{2}dt.
    \end{equation}
    Note that the reference we provided uses a different normalization for the Fourier transform, so its representation of \eqref{eq:stein} differs by constants.

    The following Dirichlet-type integral will arise a few times when calculating Fourier transforms,  see, e.g. \cite[Chapter 3.761, eq. 4, p. 440]{GR15}.
    \begin{prop}
        Let $a>0$ and $1<p<2$. Then, 
        \begin{equation}
        \label{eq:sine_identity}
        \int_0^{+\infty}x^{p-2}\sin(ax)dx = \frac{1}{a^{p-1}}\frac{\Gamma\left(p\right) \cos\left(\frac{p\pi}{2}\right)}{1-p}.
    \end{equation}
    \end{prop}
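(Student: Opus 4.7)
The plan is to reduce the problem to $a=1$ by a simple dilation, and then to recognize the resulting integral as a special value of the Mellin transform of $\sin$.

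First, I would substitute $u = ax$ with $du = a\,dx$, which gives
\[
\int_0^{+\infty} x^{p-2}\sin(ax)\,dx = a^{1-p}\int_0^{+\infty} u^{p-2}\sin(u)\,du.
\]
So it suffices to establish the one-parameter identity
\[
\int_0^{+\infty} u^{p-2}\sin(u)\,du = \frac{\Gamma(p)\cos(\pi p/2)}{1-p}
\]
for $1<p<2$, i.e., to compute the Mellin transform of $\sin$ at $s = p-1 \in (0,1)$.

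For this classical evaluation I would start from the Gamma-function integral $\int_0^{+\infty} u^{s-1}e^{-zu}\,du = \Gamma(s) z^{-s}$, valid for $\operatorname{Re}(z)>0$ and $\operatorname{Re}(s)>0$. The condition $0<s<1$ ensures absolute convergence allowing to send $z = \varepsilon + i$ with $\varepsilon \to 0^+$ (the contribution from the quarter-circle at infinity vanishes because $s-1<0$), yielding
\[
\int_0^{+\infty} u^{s-1}e^{-iu}\,du = \Gamma(s)\,i^{-s} = \Gamma(s)\bigl(\cos(\pi s/2) - i\sin(\pi s/2)\bigr).
\]
Taking imaginary parts gives $\int_0^{+\infty} u^{s-1}\sin(u)\,du = \Gamma(s)\sin(\pi s/2)$ for $0<s<1$.

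Setting $s = p-1$ and using the identity $\sin(\pi(p-1)/2) = -\cos(\pi p/2)$ together with $\Gamma(p-1) = \Gamma(p)/(p-1)$, I obtain
\[
\int_0^{+\infty} u^{p-2}\sin(u)\,du = -\Gamma(p-1)\cos(\pi p/2) = \frac{\Gamma(p)\cos(\pi p/2)}{1-p},
\]
and multiplying by $a^{1-p} = 1/a^{p-1}$ recovers the stated formula. The only nontrivial step is the contour rotation needed to pass from the real-exponential integral to the $e^{-iu}$ integral; the restriction $1<p<2$ (equivalently $0<s<1$) is exactly what makes both the boundary terms and the behavior at $0$ and $\infty$ harmless. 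Everything else is routine substitution and the recursion $\Gamma(p) = (p-1)\Gamma(p-1)$.
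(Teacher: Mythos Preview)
Your argument is correct. The reduction to $a=1$, the Mellin--transform evaluation $\int_0^{+\infty}u^{s-1}\sin u\,du=\Gamma(s)\sin(\pi s/2)$ for $0<s<1$, and the final algebraic simplification via $s=p-1$, $\sin(\pi(p-1)/2)=-\cos(\pi p/2)$ and $\Gamma(p-1)=\Gamma(p)/(p-1)$ are all fine. The only place to be a bit more careful is the phrase ``absolute convergence'': for $0<s<1$ the integral $\int_0^{+\infty}u^{s-1}e^{-iu}\,du$ is only conditionally convergent, so the passage to the limit $z=\varepsilon+i\to i$ is justified either by a genuine contour rotation on a quarter-disk (your remark about the arc at infinity vanishing since $s-1<0$ is exactly the point, together with the arc at $0$ vanishing since $s>0$) or by an Abel-type limiting argument, not by dominated convergence. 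This is standard and does not affect the validity of the proof.

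Your route is genuinely different from the paper's. The paper avoids complex analysis entirely: it writes $x^{p-2}=\frac{1}{\Gamma(2-p)}\int_0^{+\infty}t^{1-p}e^{-xt}\,dt$, applies Fubini, evaluates $\int_0^{+\infty}e^{-xt}\sin x\,dx=\frac{1}{1+t^2}$, and then computes $\int_0^{+\infty}\frac{t^{1-p}}{1+t^2}\,dt$ via the substitution $t=\tan\theta$ and the Beta function, followed by several Gamma identities (reflection and duplication) to reach the final form. Your complex-analytic approach is shorter and conceptually cleaner, delivering the answer almost immediately from $\Gamma(s)z^{-s}$; the paper's approach has the virtue of being purely real-variable and self-contained, which is presumably why the authors call it ``elementary.''
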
 
    \noindent For the convenience of the reader, we provide an elementary proof of this fact. 
    \begin{proof}
    Without loss of generality, we set $a=1$. We will use the fact that, for $x>0$, we may write $x^{p-2}= \frac{1}{\Gamma\left(2-p\right)} \int_0^{+\infty} t^{1-p} e^{-x t} d t$. Therefore, \eqref{eq:sine_identity} becomes, after an application of Fubini's theorem, 
\begin{align*}
 \int_0^{+\infty}x^{p-2}\sin(x)dx & = \int_0^{+\infty} \frac{t^{1-p}}{\Gamma(2-p)} \int_0^{+\infty}  e^{-x t} \sin (x) d x d t 
 \\
 &= \frac{1}{\Gamma(2-p)} \int_0^{+\infty} \frac{t^{1-p}}{1+t^2} d t 
 \\
& = \frac{1}{\Gamma(2-p)} \int_0^{\pi/2} \tan^{1-p} (\theta) d \theta 
\\
&= \frac{1}{\Gamma(2-p)} \int_0^{\pi/2} \sin^{1-p} (\theta) \cos^{p-1} (\theta) d \theta 
\\
&= \frac{1}{\Gamma(2-p)} \int_0^{\pi/2} \left(\sin^2(\theta)\right)^{\frac{1-p}{2}} (1-\sin^{2} (\theta))^{\frac{p-1}{2}} d \theta.
\end{align*}
Next, we use a variable substitution $u=\sin^2(\theta)$ to obtain
\begin{equation}
\label{eq:almost_dirichlet}
\begin{split}
\int_0^{+\infty}x^{p-2}\sin(x)dx &= \frac{1}{2\Gamma\left(2-p\right)} \int_0^{1} u^{-\frac{p}{2}} (1-u)^{\frac{p}{2}-1} d u
\\
&=  \frac{1}{p}\frac{\Gamma\left(1-\frac{p}{2}\right) \Gamma\left(1+\frac{p}{2}\right)}{\Gamma(2-p)}.
\end{split}
\end{equation}
We need the Gamma reflection formula in the following form,
    \begin{equation}
        \sin\left(\frac{p\pi}{2}\right)\Gamma\left(1-\frac{p}{2}\right)\Gamma\left(1+\frac{p}{2}\right)=\frac{p\pi}{2}, \qquad p\notin 2\mathbb{Z}.
        \label{eq:gamma_sine_reflection}
    \end{equation}
    Inserting \eqref{eq:gamma_sine_reflection} into \eqref{eq:almost_dirichlet} yields
    \begin{equation}
    \int_0^{+\infty}x^{p-2}\sin(x)dx = \frac{\pi}{2}\frac{1}{\sin\left(\frac{\pi p}{2}\right)}\frac{1}{\Gamma(2-p)}.
    \label{eq:almost_dirichlet_2}
\end{equation}
The cosine Gamma reflection formula is obtained from \eqref{eq:gamma_sine_reflection} by replacing $p$ with $p+1$:
    \begin{equation}
        \cos\left(\frac{p\pi}{2}\right)\Gamma\left(\frac{1}{2}+\frac{p}{2}\right)\Gamma\left(\frac{1}{2}-\frac{p}{2}\right)=\pi, \qquad p\notin 2\mathbb{Z}+1.
        \label{eq:gamma_cosine_reflection}
    \end{equation}
    The Legendre duplication formula for the Gamma function is
\begin{equation}
    \sqrt{\pi}\Gamma(p+1)=2^p\Gamma\left(\frac{1}{2}+\frac{p}{2}\right)\Gamma\left(1+\frac{p}{2}\right).
    \label{eq:duplication}
\end{equation}
Combining \eqref{eq:gamma_sine_reflection}, \eqref{eq:gamma_cosine_reflection} and \eqref{eq:duplication} creates 
\begin{equation}
    \label{eq:gamma_ident}
    \left(\Gamma(1-p) \sin\left(\frac{\pi p}{2}\right)\right) \left(\Gamma(p) \cos\left(\frac{\pi p}{2}\right)\right) = \frac{\pi}{2}, \qquad p\not\in \mathbb{Z}.\end{equation}
    By inserting \eqref{eq:gamma_ident} into \eqref{eq:almost_dirichlet_2}, we deduce \eqref{eq:sine_identity}.
\end{proof}

Finally, we consider convolutions. Let $f,g:\R^n\to \R_+$ be two measurable functions. Then, we recall that their convolution is precisely the function $(f\ast g):\R^n\to\R_+$ given by
$$(f\ast g)(y)=\int_{\R^n}f(x)g(y-x)dx.$$
Note that the Fourier transform of a convolution of integrable functions is the product of their Fourier transforms: $\widehat{(f\ast g)}=\hat f \cdot \hat g$.

For a Borel set $A\subset \R^n$, we denote by $A^\star$ the centered Euclidean ball with the same volume as $A$. Recall that a non-negative, measurable function $f$ on $\R^n$ can be written via its layer-cake representation: for $x\in\R^n$, one has
\begin{equation}
\label{eq:layer_cake}
f(x)=\int_0^\infty \chi_{\{f\geq t\}}(x)dt,
\end{equation}
where $\{f\geq t\}=\{x\in\R^n:f(x)\geq t\}$ are the superlevel sets of $f$. If $f$ has the additional property that almost all of its superlevel sets have finite volume, then the symmetric decreasing rearrangement of $f$ is the function $f^\star$ given by
$$f^\star(x)=\int_0^\infty \chi_{\{f\geq t\}^\star}(x)dt.$$

We will need the Riesz convolution inequality \cite{RF30} (see also \cite{BLL74}), which states that, for a triple of functions $f,g,h:\R^n\to \R_+$, one has
\begin{equation}
    \int_{\R^n}h(x)\left(f\ast g\right)(x) dx \leq \int_{\R^n}h^\star(x)\left(f^\star\ast g^\star\right)(x) dx.
    \label{eq:riesz}
\end{equation} 

\subsection{On Intersection Bodies and Polar Centroid Bodies}

In \cite{AK99}, A. Koldobsky introduced the $p$-intersection body of an origin-symmetric star body $M$ for $p\in \N\cap (0,n)$. He provided a Fourier-inversion-type relation between $M$ and $I_p M$, and we use this formulation as the basis of the following definition.
\begin{defn}
\label{d:p_intersection_fourier}
Fix $p\in (0,n)$ and consider two origin-symmetric star-shaped sets $M,K\subset \R^n$. We say $K$ is the $p$-intersection star of $M$ if its radial function satisfies
\[
    \rho_{K}^p = \frac{p}{(n-p)(2\pi)^p}\widehat{\rho_{M}^{n-p}},
    \]
    where we allow radial functions to take infinite value. We write $K=I_p M$.
\end{defn}
\noindent Note that $I_p M$ may not always exist, even when $p\in \N\cap (0,n)$. Indeed, for  $p=n-1$ and $M$ a star body that is not an intersection body, $\widehat{\rho_{M}^{n-p}}$ will not be positive (see \cite{AK05}). The case of Definition~\ref{d:p_intersection_fourier} when $M$ is a star body and $p$ is not an integer was first suggested by V. Yaskin \cite{YV14}. We will show that, if $M$ is a $L^{n-p}$ star such that $I_p M$ exists, then it is a $L^p$-star. To this end, we state a preliminary lemma, where  we say a function $f$ is $q$-homogeneous, $q\neq 0$, if $f(tx)=t^qf(x)$ for every $t>0$ and $x\in\R^n\setminus\{o\}$.
\begin{lem}
\label{l:hom}
Let $f:\R^n\to \R$ be a non-negative, measurable, $(-p)$- homogeneous function for some $p<n$. Then, one has
$$\int_{\R^n}f(x)e^{-\frac{|x|^2}{2}}dx=2^{\frac{n-p}{2}-1}\Gamma\left(\frac{n-p}{2}\right)\int_{\s^{n-1}}f(u)du.$$
\end{lem}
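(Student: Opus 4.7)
The plan is to reduce the $n$-dimensional integral to a one-dimensional Gamma integral via polar coordinates, exploiting the homogeneity of $f$ to cleanly separate the spherical and radial parts.

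First, I would pass to polar coordinates $x = r\theta$ with $r > 0$ and $\theta \in \s^{n-1}$, so that $dx = r^{n-1}\,dr\,d\theta$. Since $f$ is $(-p)$-homogeneous, one has $f(r\theta) = r^{-p} f(\theta)$ for $r > 0$, and hence
\[
\int_{\R^n} f(x) e^{-|x|^2/2}\,dx \;=\; \left(\int_{\s^{n-1}} f(\theta)\,d\theta\right) \left(\int_0^{\infty} r^{n-p-1} e^{-r^2/2}\,dr\right).
\]
The use of Fubini is justified by the non-negativity of $f$, and the hypothesis $p < n$ ensures the radial integral converges at $r = 0$ (while the Gaussian factor handles convergence at infinity).

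Next, I would evaluate the radial integral with the substitution $u = r^2/2$, which gives $dr = (2u)^{-1/2}\,du$ and converts it into
\[
\int_0^{\infty} r^{n-p-1} e^{-r^2/2}\,dr \;=\; 2^{\frac{n-p}{2} - 1}\int_0^{\infty} u^{\frac{n-p}{2} - 1} e^{-u}\,du \;=\; 2^{\frac{n-p}{2} - 1}\,\Gamma\!\left(\frac{n-p}{2}\right).
\]
Substituting this back yields the claimed identity.

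There is no real obstacle here; the only point requiring minor care is to check measurability of $\theta \mapsto f(\theta)$ on $\s^{n-1}$ (which follows from the measurability of $f$ on $\R^n$ combined with the homogeneity, so that the spherical integral is a well-defined element of $[0,+\infty]$), and to note that the identity holds as an equality in $[0,+\infty]$ regardless of whether $\int_{\s^{n-1}} f\,d\theta$ is finite.
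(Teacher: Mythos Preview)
Your proof is correct and follows essentially the same approach as the paper: pass to polar coordinates, use the $(-p)$-homogeneity to separate the spherical and radial integrals, and evaluate the radial part via the substitution $t = r^2/2$ to obtain the Gamma factor. If anything, your version is slightly more careful in recording the justifications (Fubini via non-negativity, convergence at $r=0$ from $p<n$, measurability of the spherical restriction).
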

\begin{proof}
Integrating in polar coordinates, we directly compute:
\begin{align*}
\int_{\R^n}f(x)e^{-\frac{|x|^2}{2}}dx&=\int_{\s^{n-1}}\int_{0}^{+\infty}f(ru)r^{n-1}e^{-\frac{r^2}{2}}drdu
\\
&=\int_{0}^{+\infty}r^{n-p-1}e^{-\frac{r^2}{2}}dr\int_{\s^{n-1}}f(u)du.
\end{align*}
We conclude with the substitution $t=\frac{r^2}{2}$ and an application of the Gamma function. 
\end{proof}
\begin{thm}
    Fix $p\in (0,n)$ and let $M\subset\R^n$ be an origin-symmetric $L^{n-p}$-star with $p$-intersection star $I_p M$. Then, $I_p M$ is an $L^p$-star.
\end{thm}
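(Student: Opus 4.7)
The plan is to test the defining distributional identity $\widehat{\rho_M^{n-p}} = \frac{(n-p)(2\pi)^p}{p}\rho_{I_pM}^p$ against the Gaussian $\phi(x) = e^{-|x|^2/2}$, whose Fourier transform is the rescaled Gaussian $\hat\phi(x) = (2\pi)^{n/2}e^{-|x|^2/2}$. This self-reproducing property makes Parseval symmetric, and since both $\rho_M^{n-p}$ and $\rho_{I_pM}^p$ are radially homogeneous of degrees $-(n-p)$ and $-p$, respectively, Lemma~\ref{l:hom} will convert each of the resulting $\R^n$-integrals into a constant multiple of a surface integral over $\s^{n-1}$, directly linking $\int_{\s^{n-1}}\rho_{I_pM}^p(u)du$ to the assumed-finite quantity $\int_{\s^{n-1}}\rho_M^{n-p}(u)du$.

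Concretely, Parseval~\eqref{eq:parseval_1} gives
$$\langle \widehat{\rho_M^{n-p}}, \phi\rangle = \langle \rho_M^{n-p}, \hat\phi\rangle = (2\pi)^{n/2}\int_{\R^n}\rho_M^{n-p}(x)e^{-|x|^2/2}dx,$$
and Lemma~\ref{l:hom} (valid because $p > 0$) evaluates this to $(2\pi)^{n/2}\,2^{p/2-1}\,\Gamma(p/2)\int_{\s^{n-1}}\rho_M^{n-p}(u)du$, which is finite by the $L^{n-p}$-star hypothesis on $M$. On the other hand, substituting the defining identity for $I_pM$ rewrites the same pairing as $\frac{(n-p)(2\pi)^p}{p}\int_{\R^n}\rho_{I_pM}^p(x)e^{-|x|^2/2}dx$, and a second application of Lemma~\ref{l:hom} (valid because $p < n$) equates this to a positive constant multiple of $\int_{\s^{n-1}}\rho_{I_pM}^p(u)du$. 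Setting the two evaluations equal and solving exhibits $\int_{\s^{n-1}}\rho_{I_pM}^p(u)du$ as a finite, explicit positive multiple of $\int_{\s^{n-1}}\rho_M^{n-p}(u)du$, proving $I_pM$ is an $L^p$-star.

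The only point requiring attention is the passage from the distributional equality $\widehat{\rho_M^{n-p}} = c\,\rho_{I_pM}^p$ to the Lebesgue integral $c\int_{\R^n}\rho_{I_pM}^p\,\phi\,dx$; this needs $\rho_{I_pM}^p$ to be locally integrable on $\R^n$, which is already built into the hypothesis that $I_pM$ exists (otherwise the defining equality would be vacuous). Beyond that, the argument is essentially a two-line calculation in which the Gaussian plays the role of a convenient auxiliary radial Parseval identity on $\s^{n-1}$.
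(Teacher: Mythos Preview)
Your proof is correct and follows essentially the same route as the paper's: both test the distributional identity against the Gaussian $e^{-|x|^2/2}$, use its Fourier self-reproducing property via Parseval, and invoke Lemma~\ref{l:hom} twice to convert between $\R^n$-integrals and spherical integrals of the two homogeneous functions $\rho_M^{n-p}$ and $\rho_{I_pM}^p$. The only cosmetic difference is that the paper begins from the $I_pM$ side and works toward $M$, whereas you start on the $M$ side; the computation is otherwise identical.
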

\begin{proof}
    We must show that $\rho_{I_pM}\in L^p(\s^{n-1})$. We apply Lemma~\ref{l:hom} with $f=\rho_{I_pM}^p$ to obtain
    \begin{equation}
    \label{eq:hom}
    \int_{\s^{n-1}}\rho_{I_pM}^p(\theta) d\theta =\frac{1}{2^{\frac{n-p}{2}-1}\Gamma(\frac{n-p}{2})}\int_{\R^n}\rho_{I_pM}^p(x)e^{-\frac{|x|^2}{2}}dx.
    \end{equation}
    We apply Definition~\ref{d:p_intersection_fourier} to \eqref{eq:hom} and obtain
    \begin{equation}
        \label{eq:hom_fourier}
        \int_{\s^{n-1}}\rho_{I_pM}^p(\theta) d\theta =\frac{2p}{n-p}\frac{1}{\pi^p}\frac{1}{2^\frac{n+p}{2}\Gamma(\frac{n-p}{2})}\int_{\R^n}\widehat{\rho_{M}^{n-p}}(x)e^{-\frac{|x|^2}{2}}dx.
    \end{equation}
    We apply Parseval's formula \eqref{eq:parseval_1}, which is valid since $e^{-\frac{|\,\cdot\,|^2}{2}} \in \mathcal{S}(\R^n)$, and obtain
    \begin{equation}
        \label{eq:hom_fourier_e}
        \int_{\s^{n-1}}\rho_{I_pM}^p(\theta) d\theta =\frac{2p}{n-p}\frac{1}{\pi^p}\frac{1}{2^\frac{n+p}{2}\Gamma(\frac{n-p}{2})}\int_{\R^n}\rho_{M}^{n-p}(x)\widehat{e^{-\frac{|\,\cdot\,|^2}{2}}}(x)dx.
    \end{equation}
    Recall the classical fact that the Gaussian density is a fixed point of the Fourier transform:
    \begin{equation}
        \widehat{e^{-\frac{|\,\cdot\,|^2}{2}}}(x) = (2\pi)^\frac{n}{2}e^{-\frac{|x|^2}{2}}.
        \label{eq:fixed}
    \end{equation}
    Inserting \eqref{eq:fixed} into \eqref{eq:hom_fourier_e} yields the relation
    \[
        \int_{\s^{n-1}}\rho_{I_pM}^p(\theta) d\theta =\frac{2p}{n-p}\frac{\pi^\frac{n-p}{2}}{(2\pi)^\frac{p}{2}}\frac{1}{\Gamma(\frac{n-p}{2})}\int_{\R^n}\rho_{M}^{n-p}(x)e^{-\frac{|x|^2}{2}}dx.
    \]
    Using again Lemma~\ref{l:hom} allows us to return to spherical integration:
    \[
        \int_{\s^{n-1}}\rho_{I_pM}^p(\theta) d\theta =\frac{2^\frac{p}{2}p}{n-p}\frac{\Gamma\left(\frac{p}{2}\right)}{\Gamma\left(\frac{n-p}{2}\right)}\frac{\pi^\frac{n-p}{2}}{(2\pi)^\frac{p}{2}}\int_{\s^{n-1}}\rho_{M}^{n-p}(u)du,
    \]
    which is finite, since $M$ is a $L^{n-p}$-star.  
\end{proof}

For $q>-1$, the $q$th cosine transformation of a measurable function $f$ on $\s^{n-1}$ is given by
\begin{equation}
    \mathcal{C}_q(f)(x) = \int_{\s^{n-1}}|\langle x,u\rangle|^qf(u)du,\quad x\in\R^n.
    \label{eq:cosine_function}
\end{equation}
We will need the Fourier transform of the cosine transform from \cite[Corollary 3.15]{AK05}, when $q$ is not an even integer and  from  \cite[Equation 3.9]{GYY09}, when $q$ is an even integer.
\begin{prop}
\label{p:Koldobsky_cosine}
   Suppose that $q>-1$  and $f$ is an even, continuous function on $\s^{n-1}$. Assume $q$ is not an even integer. Then. $\widehat {\mathcal{C}_{q}(f)}$ is a homogeneous function of degree $-(n+q)$ that is continuous on $\R^n\setminus\{0\},$ and  
   \begin{equation}
       \widehat{\mathcal{C}_{q}(f)}(y) = -4(2\pi)^{n-1}\Gamma\left(q+1\right)\sin\left(\frac{\pi q}{2}\right)f\left(\frac{y}{|y|}\right)|y|^{-n-q},  \qquad \forall y\in\R^n\setminus\{0\}.
       \label{eq:fourier_cosine}
   \end{equation}
   Furthermore, if $f$ itself is $-(n+q)$-homogeneously extended to $\R^n$, then $f\left(\frac{y}{|y|}\right)|y|^{-n-q} = f(y)$, and we apply Fourier inversion to obtain
   \[
   \hat f(x) = -\frac{\pi}{2}\frac{1}{\Gamma\left(q+1\right)\sin\left(\frac{\pi q}{2}\right)}\mathcal{C}_{q}(f)(x).
   \]
   In the case when $q$ is an even integer we get, with $\alpha_q = \psi(1)+ H_q$:
    \[
   \hat f(x) = \frac{(-1)^\frac{q}{2}}{q!}\int_{\s^{n-1}}\left(\alpha_q-\ln |\langle x,u\rangle|\right)|\langle x,u\rangle|^qf(u)du.
   \]
\end{prop}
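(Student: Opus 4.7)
The plan is to reduce the $n$-dimensional Fourier transform of $\mathcal{C}_q(f)$ to a 1D computation via the Radon transform, and then invoke the classical distributional formula for $\widehat{|t|^q}$. Pair $\mathcal{C}_q(f)$ against a test function $\phi\in\mathcal{S}(\R^n)$ and apply Parseval's formula \eqref{eq:parseval_1} with Fubini to obtain
\[
\langle \widehat{\mathcal{C}_q(f)},\phi\rangle \;=\; \int_{\s^{n-1}} f(u)\int_{\R^n}|\langle x,u\rangle|^q \hat{\phi}(x)\,dx\,du.
\]
By slicing along hyperplanes orthogonal to $u$, the inner integral equals $\int_{\R}|t|^q\,\mathcal{R}\hat{\phi}(u;t)\,dt$ via \eqref{eq:og_radon}.

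Next, I would combine the Fourier slice identity \eqref{eq:og_radon_fourier} with Fourier inversion \eqref{eq:inversion} to obtain $\widehat{\mathcal{R}\hat{\phi}(u;\cdot)}(r) = (2\pi)^n\phi(-ru)$, and then apply one-dimensional Parseval together with the classical distributional identity
\[
\widehat{|t|^q}(\xi) \;=\; -2\,\Gamma(q+1)\,\sin\!\left(\tfrac{\pi q}{2}\right)|\xi|^{-q-1}, \qquad q\notin 2\N,
\]
where $|\xi|^{-q-1}$ is interpreted as the appropriate homogeneous distribution of degree $-q-1$. This yields
\[
\int_{\R}|t|^q\,\mathcal{R}\hat{\phi}(u;t)\,dt \;=\; -2(2\pi)^{n-1}\Gamma(q+1)\sin\!\left(\tfrac{\pi q}{2}\right)\int_{\R}|\xi|^{-q-1}\phi(\xi u)\,d\xi.
\]
Substituting back, folding the $\xi<0$ contribution via $f(-u)=f(u)$, and switching to polar coordinates $y=\xi u$ on $(0,\infty)\times\s^{n-1}$, I obtain
\[
\langle\widehat{\mathcal{C}_q(f)},\phi\rangle \;=\; -4(2\pi)^{n-1}\Gamma(q+1)\sin\!\left(\tfrac{\pi q}{2}\right)\int_{\R^n} f\!\left(\tfrac{y}{|y|}\right)|y|^{-n-q}\phi(y)\,dy,
\]
which identifies $\widehat{\mathcal{C}_q(f)}$ with the claimed function; continuity on $\R^n\setminus\{0\}$ and $-(n+q)$-homogeneity are then immediate from the explicit formula (since $f$ is continuous on $\s^{n-1}$). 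The intermediate inversion identity for $\hat{f}$ when $f$ is $-(n+q)$-homogeneously extended follows by applying \eqref{eq:inversion} to the identity just derived, using evenness of $f$ and $\mathcal{C}_q(f)$ to remove the sign on $y$.

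The main obstacle is the even-integer case $q=2k$, in which $|t|^{2k}$ is a polynomial and $\widehat{|t|^{2k}}$ is supported at the origin, so the above identity degenerates. My plan here is to regard $q$ as a complex parameter and to pass to the limit $q\to 2k$ by analytic continuation. After extracting the factor $\sin(\pi q/2)$, both sides of \eqref{eq:fourier_cosine} extend to meromorphic functions of $q$ whose simple poles at $q=2k$ must cancel. Computing the residue and finite part produces a logarithmic term from $\tfrac{d}{dq}|\langle x,u\rangle|^q\big|_{q=2k} = |\langle x,u\rangle|^{2k}\log|\langle x,u\rangle|$, together with $\psi(q+1)\big|_{q=2k}=\psi(1)+H_{2k}$ from differentiating $\Gamma(q+1)$; these combine to give the expression $(\alpha_q-\log|\langle x,u\rangle|)|\langle x,u\rangle|^q$ with $\alpha_q=\psi(1)+H_q$, and the numerical prefactor $(-1)^{q/2}/q!$ arises from $\lim_{q\to 2k}\Gamma(q+1)\sin(\pi q/2)/(q-2k)$. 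Justifying termwise differentiation of the underlying distributions is the delicate point; I would handle it within the Gelfand--Shilov framework for distributions depending meromorphically on a parameter, as developed in \cite{AK05}.
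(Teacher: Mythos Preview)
The paper does not supply its own proof of this proposition; it quotes the result from \cite[Corollary 3.15]{AK05} for $q$ not an even integer and from \cite[Equation 3.9]{GYY09} for the even-integer case. Your proposal is essentially the argument in those references: reduce to one dimension via the Radon transform, invoke the distributional identity for $\widehat{|t|^q}$, and for $q=2k$ analytically continue in $q$ and extract the finite part. Your computation for $q\notin 2\N$ is correct, including the constants.

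One point to tighten in the even-integer case: you speak of cancelling poles ``on both sides of \eqref{eq:fourier_cosine}'', but that identity degenerates at $q=2k$ (the left side becomes a distribution supported at the origin, since $\mathcal{C}_{2k}(f)$ is a polynomial, while the right side vanishes). The analytic continuation should be run on the inversion formula $\hat f(x)=-\tfrac{\pi}{2}\bigl(\Gamma(q+1)\sin(\pi q/2)\bigr)^{-1}\mathcal{C}_q(f)(x)$ instead. There the $-(n+q)$-homogeneous distribution $f_q(y)=f(y/|y|)\,|y|^{-n-q}$ itself acquires a simple pole at $q=2k$ (it ceases to be locally integrable at the origin), and the proposition's even-integer formula is the Fourier transform of its Hadamard finite part. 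Your first-order expansion of the prefactor and of $\int_{\s^{n-1}}|\langle x,u\rangle|^q f(u)\,du$ in $q-2k$ is exactly right and does produce $\alpha_q=\psi(1)+H_q$ and the coefficient $(-1)^{q/2}/q!$; what remains is to match the pole term with the regularization of $f_q$ at the origin, which is precisely where the Gel'fand--Shilov framework you invoke is needed.
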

In \eqref{eq:cosine_function}, set $f=\frac{1}{n+q}\frac{1}{\vol_n(M)}\rho_M^{n+q}$; using \eqref{eq:L_p} and using polar coordinates, we obtain
\begin{equation}
\label{eq:Lq_intersection_bodies}
\mathcal{C}_{q}\left(\frac{1}{(n+q)\vol_n(M)}\rho_M^{n+q}\right) = \rho_{\Gamma_q^\circ M}^{-q}.
\end{equation}
By Proposition~\ref{p:Koldobsky_cosine}, the right hand side of \eqref{eq:Lq_intersection_bodies} is a multiple of $\widehat{\rho_M^{n+q}}$ when $q>-1$ is not an even integer. This serves to extend the notion of $p$-intersection bodies to negative $p$, by identifying $p$ in Definition~\ref{d:p_intersection_fourier} with $(-q)$ in \eqref{eq:Lq_intersection_bodies}. In fact, in the literature, the bodies $\Gamma_q^\circ M$ are sometimes called $L^q$ intersection bodies, motivated further by revelations from \cite{HL06}. See \cite{LE90,GrZ99,YY06} for relevant Busemann-Petty type problems concerning these bodies.

\subsection{On Functions with Concavity}
Recall that a non-negative, measurable function $g$ on $\R^n$ is not identically zero if its support, $\supp (g) = \overline{\{x\in\R^n:g(x)>0\}}$, has positive Lebesgue measure.

We say a non-negative function $g:\R^n\rightarrow\R$ is $s$-concave, $s\in \R$, $s\neq 0$, if it is not identically zero and for every $x,y\in\supp(g)$ and $t\in[0,1],$
 \begin{equation}g((1-t)x + t y) \geq \left((1-t)g(x)^s+t g(y)^s\right)^\frac{1}{s}.
\label{eq:s_concave}
\end{equation}
As $s\to 0^+$, one obtains the case of $0$-concavity or \textit{log-concavity}: \begin{equation}
\label{eq:log_concave}
g((1-t)x+t y) \geq g(x)^{1-t}g(y)^t.\end{equation}
By Jensen's inequality, if $g$ is $s$-concave, then $g$ is $s^\prime$-concave for $s^\prime <s$. In particular, every $s$-concave function, $s>0$, is log-concave on its support. K. Ball \cite[Theorem 5]{Ball88} showed that, given an integrable log-concave function, one can associate to it a family of convex bodies.
\begin{theorem}
\label{t:radial_ball}
    Let $g$ be a non-negative, integrable, log-concave function on $\R^n$. Then, for every $p> 0,$ the function on $\s^{n-1}$ given by 
    \begin{equation}
    \label{eq:keith_ball_body}
    \theta\mapsto\left(\frac{p}{\|g\|_{L^\infty(\R^n)}}\int_0^\infty g(r\theta)r^{p-1}dr\right)^\frac{1}{p}\end{equation}
    defines the radial function of a convex body.
\end{theorem}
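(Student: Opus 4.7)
The plan is to show that $\rho$ is the radial function of a convex body by establishing that its associated Minkowski gauge is subadditive.  After rescaling so that $\|g\|_{L^\infty(\R^n)}=1$, define
\[
F(x):= p\int_0^{+\infty}g(rx)\,r^{p-1}\,dr, \qquad x\in\R^n\setminus\{0\}.
\]
A change of variables $r\mapsto\lambda r$ yields $F(\lambda x)=\lambda^{-p}F(x)$, so $F$ is $(-p)$-homogeneous.  The star body $K=\{0\}\cup\{x\in\R^n:F(x)\geq 1\}$ has Minkowski gauge $\|x\|_K:=F(x)^{-1/p}$ and radial function $\theta\mapsto F(\theta)^{1/p}$, matching the $\rho$ of the statement.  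Since $\|\cdot\|_K$ is positively $1$-homogeneous, convexity of $K$ reduces to the subadditivity $\|x+y\|_K\leq\|x\|_K+\|y\|_K$ for $x,y\in\R^n\setminus\{0\}$.

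Setting $a=\|x\|_K$, $b=\|y\|_K$, $\mu=a/(a+b)$ and writing $\hat x=x/a$, $\hat y=y/b$ so that $F(\hat x)=F(\hat y)=1$, the subadditivity is equivalent to $F(\mu\hat x+(1-\mu)\hat y)\geq 1$.  The key tool is the one-dimensional Borell--Brascamp--Lieb inequality, combined with the Brunn--Minkowski-type inclusion of the superlevel sets $B_s=\{g>s\}$ (convex by log-concavity of $g$): for $s_1,s_2>0$ and $\lambda\in[0,1]$,
\[
(1-\lambda)B_{s_1}+\lambda B_{s_2}\subset B_{s_1^{1-\lambda}s_2^{\lambda}}.
\]
After a translation argument reducing to the case $g(0)=1=\|g\|_{L^\infty(\R^n)}$, a layer-cake decomposition in $s\in(0,1)$ gives the representation $F(x)=\int_0^1\rho_{B_s}(x)^p\,ds$, where $\rho_{B_s}$ is the radial function of $B_s$.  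Applying Borell--Brascamp--Lieb with an exponent matched to the $L^{-p}$-mean structure then converts the Brunn--Minkowski inclusion into the required subadditivity of $F^{-1/p}$.

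The main obstacle is the technical verification of the Borell--Brascamp--Lieb midpoint hypothesis: the weight $r^{p-1}$ interacts with log-concavity of $g$ differently depending on whether $p\geq 1$ (when $r^{p-1}$ is log-concave) or $p\in(0,1)$ (when it is log-convex).  For $p\geq 1$, after the substitution $r=u^{1/p}$ the integrand $u\mapsto g(u^{1/p}x)$ becomes log-concave in $u$, and the Prékopa--Leindler inequality applies via an argument carried out simultaneously along the rays through $\hat x$ and $\hat y$.  For $p\in(0,1)$ this substitution does not preserve log-concavity, and one must instead work directly with the layer-cake representation, choosing the Borell--Brascamp--Lieb exponent $\gamma=-p/(1+p)$ to align with the $L^{-p}$-mean structure of $F$.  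In either regime the conclusion is $F(\mu\hat x+(1-\mu)\hat y)\geq 1$, yielding the subadditivity of $\|\cdot\|_K$ and hence the convexity of $K$.
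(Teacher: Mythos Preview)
The paper does not give its own proof of this result; it is quoted as Ball's theorem and used as a black box. Assessing your sketch on its merits, there is a concrete error and a missing idea. The claim that for $p\ge 1$ the substitution $r=u^{1/p}$ renders $u\mapsto g(u^{1/p}x)$ log-concave is false: take $n=1$, $g(t)=(1-|t|)_+$, $x=1$, $p=2$; then $u\mapsto 1-\sqrt{u}$ has $\frac{d^2}{du^2}\log(1-\sqrt u)>0$ on $(0,1/4)$. Log-concavity of $g$ along a ray gives concavity of $t\mapsto\log g(tx)$, but composing a concave function with the concave map $u\mapsto u^{1/p}$ does not preserve concavity unless the outer function is monotone in the right direction, which you cannot assume. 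Your ``translation argument reducing to $g(0)=\|g\|_{L^\infty}$'' is also not available: the body $K_p(g)$ is not invariant under translations of $g$, so this reduction changes the object whose convexity you must prove (and the layer-cake identity $F(x)=\int_0^1\rho_{B_s}(x)^p\,ds$ genuinely requires $0\in B_s$ for all $s$).

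More fundamentally, neither route in your sketch confronts the real obstruction. To apply a Pr\'ekopa--Leindler or Borell--Brascamp--Lieb inequality you would need, for $z=\mu\hat x+(1-\mu)\hat y$, an estimate of the form $g(tz)\ge g(r\hat x)^{\mu}g(s\hat y)^{1-\mu}$ with $t$ a \emph{fixed} convex combination of $r$ and $s$. Log-concavity of $g$ only gives $g\big(\mu r\hat x+(1-\mu)s\hat y\big)\ge g(r\hat x)^{\mu}g(s\hat y)^{1-\mu}$, and the point $\mu r\hat x+(1-\mu)s\hat y$ lies on the ray through $z$ only when $r=s$; for $r\ne s$ the effective convex-combination parameter depends on $(r,s)$. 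Ball's proof resolves this by first establishing a dedicated one-dimensional lemma that handles precisely this variable-parameter situation and then reducing the $n$-dimensional statement to it via the two-dimensional section spanned by $\hat x,\hat y$. That lemma is the missing ingredient; once you have it, the split into $p\ge1$ and $p\in(0,1)$ is unnecessary.
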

We denote by $K_p(g)$ the convex body given by \eqref{eq:keith_ball_body}. Moreover, we mention a recent extension of Ball bodies to $p\in (-1,0)$ from \cite{LMU25}: we define the star body $K_p(g) \subset \R^n$ of an integrable, non-negative, log-concave function $g:\R^n\to \R$ by the radial function
\begin{equation}
    \label{eq:kbb}
    \begin{split}
    \rho_{K_p(g)}(\theta)&=\begin{cases} \left(\frac{p}{\|g\|_{L^\infty(\R^n)}}\int_0^\infty g(r\theta)r^{p-1}\, \dlat r\right)^\frac{1}{p}, &p>0,
    \\
    \exp\left(\frac{1}{g(o)}\int_0^\infty(-\frac{\partial}{\partial r}g(r\theta))\log(r)\; \dlat r\right), &p=0,
    \\
    \left(\frac{p}{g(o)}\int_0^\infty r^{p-1}(g(r\theta)-g(o))\; \dlat r\right)^\frac{1}{p}, & p\in (-1,0),
    \end{cases}
    \end{split}
\end{equation}
where, for $p\in (-1,0]$, we additionally assume that $g$ obtains its maximum at the origin. For the formula $p=0$, $\frac{\partial}{\partial r}$ denotes the one-sided derivative of $g(r\theta)$ in $r$ from above, which exists since $g$ is log-concave. 

It follows from the fact that an integrable, log-concave function is bounded by an exponential function, and the fact that the difference quotients of a concave function are monotonic, that each integral in \eqref{eq:kbb} is finite. That is, $K_p(g)$ is a star body for all $p>-1$. The content of Theorem~\ref{t:radial_ball} is that the bodies $K_p(g)$ are convex for $p>0$, and, therefore, $K_0(g)$ is convex by a limiting argument. The convexity of $K_p(g)$ $p \in (-1, 0)$ was recently shown in \cite{DL26}. 

With this definition available, we have the following integral identity.
\begin{prop}
\label{p:ball_homo_formula}
    Let $g:\R^n\to \R$ be a non-negative, integrable, log-concave function. Let $p>-1$, $p\neq 0$, and consider a measurable function $f:\R^n\setminus\{o\}\to\R$ that is homogeneous of degree $(p-n)$. Then, we have the identities
    \[
    \int_{K_p(g)}f(x) dx = \frac{1}{\|g\|_{L^\infty(\R^n)}}\int_{\R^n}f(x)g(x)dx, \quad p>0
    \]
    \text{and, if $g$ obtains its maximum at the origin,}
    \[
    \int_{\R^n\setminus K_p(g)}f(x) dx = \frac{1}{g(o)}\int_{\R^n}f(x)(g(o)-g(x))dx, \quad -1<p<0.
    \]
\end{prop}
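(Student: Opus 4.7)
The plan is to unfold everything in polar coordinates and use the $(p-n)$-homogeneity of $f$ to convert radial Lebesgue integrals into weighted integrals against $g$, and vice versa. The two cases are parallel, the only real difference being the sign of $p$ and the use of $[0,\rho]$ versus $[\rho,\infty)$ as the radial interval.

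For $p>0$, I would begin with
\[
\int_{K_p(g)} f(x)\,dx \;=\; \int_{\s^{n-1}}\int_0^{\rho_{K_p(g)}(\theta)} f(r\theta)\, r^{n-1}\, dr\, d\theta.
\]
By the $(p-n)$-homogeneity, $f(r\theta)\, r^{n-1} = f(\theta)\, r^{p-1}$, so the inner integral equals $f(\theta)\, \rho_{K_p(g)}^{\,p}(\theta)/p$. Substituting the defining formula $\rho_{K_p(g)}^{\,p}(\theta) = \frac{p}{\|g\|_{L^\infty}}\int_0^\infty g(r\theta)\, r^{p-1}\, dr$ from \eqref{eq:kbb}, the factors of $p$ cancel, and reversing the identification $f(\theta)\, r^{p-1} = f(r\theta)\, r^{n-1}$ brings the double integral back into polar form as $\frac{1}{\|g\|_{L^\infty}}\int_{\R^n} f(x) g(x)\, dx$. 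The use of Fubini is legitimate because after taking absolute values the computation is monotone, so the identity holds as an equality in $[0,\infty]$; in particular, finiteness on one side is equivalent to finiteness on the other.

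For $p\in(-1,0)$, the same scheme applies, only now I integrate over the exterior $\R^n\setminus K_p(g)$. Polar coordinates give
\[
\int_{\R^n\setminus K_p(g)} f(x)\,dx \;=\; \int_{\s^{n-1}} f(\theta)\int_{\rho_{K_p(g)}(\theta)}^{\infty} r^{p-1}\, dr\, d\theta \;=\; -\frac{1}{p}\int_{\s^{n-1}} f(\theta)\, \rho_{K_p(g)}^{\,p}(\theta)\, d\theta,
\]
since $p<0$ makes $r^{p-1}$ integrable at infinity. Plugging in the defining formula $\rho_{K_p(g)}^{\,p}(\theta)=\frac{p}{g(o)}\int_0^{\infty} r^{p-1}(g(r\theta)-g(o))\, dr$, the factor $-1/p$ cancels the factor $p$ (the resulting sign aligns because $g(r\theta)-g(o)\le 0$), and undoing the homogeneity exactly as in the $p>0$ case yields $\frac{1}{g(o)}\int_{\R^n} f(x)(g(o)-g(x))\, dx$.

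The only point requiring any care is the interchange of integrations and the convergence of the outer radial integral. For $p>0$ this is immediate since $\rho_{K_p(g)}$ is bounded (the body is compact) and $f \cdot g$ is handled by monotone convergence on absolute values. For $p\in(-1,0)$ we must ensure $\rho_{K_p(g)}(\theta)>0$ for every $\theta$, which is exactly what guarantees $K_p(g)$ is a star body; this is recorded in the paragraph just before the proposition, and it makes the integral $\int_{\rho}^\infty r^{p-1}\, dr$ finite. No other obstacle appears, and the identity is obtained by a single application of Fubini in each case.
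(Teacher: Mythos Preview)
Your proposal is correct and follows essentially the same approach as the paper's proof: both unfold in polar coordinates, use the $(p-n)$-homogeneity to rewrite $f(r\theta)r^{n-1}=f(\theta)r^{p-1}$, integrate radially to produce $\rho_{K_p(g)}^p/p$, substitute the defining formula \eqref{eq:kbb}, and then reverse the homogeneity to return to an integral over $\R^n$. Your discussion of the Fubini justification and the positivity of $\rho_{K_p(g)}$ is a little more explicit than the paper's, but the argument is otherwise line-for-line the same.
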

\begin{proof}

We start with the first identity. Observe that
\begin{align*}
\int_{K_p(g)}f(x) dx &= \int_{\s^{n-1}}\int_{0}^{\rho_{K_p(g)}(u)}f(tu)t^{n-1}dtdu
        \\
        &= \int_{\s^{n-1}}f(u)\int_{0}^{\rho_{K_p(g)}(u)}t^{p-1}dtdu
        \\
        &=\frac{1}{p}\int_{\s^{n-1}}f(u)\rho_{K_p(g)}^p (u)du.
\end{align*}
Inserting the definition of $K_p(g)$, we get
 \begin{align*}
        \int_{K_p(g)}f(x) dx &= \frac{1}{\|g\|_{L^\infty(\R^n)}} \int_{\s^{n-1}}\int_0^{+\infty}f(u)g(ru)r^{p-1}drdu
        \\
        &= \frac{1}{\|g\|_{L^\infty(\R^n)}}\int_{\s^{n-1}}\int_0^{+\infty}f(ru)g(ru)r^{n-1}drdu
        \\
        &= \frac{1}{\|g\|_{L^\infty(\R^n)}}\int_{\R^n}f(x)g(x)dx.
    \end{align*}
    The second identity is similar; we simply note that the use of polar coordinates becomes
    \[
     \int_{\R^n\setminus K_p(g)}f(x) dx = \int_{\s^{n-1}}\int_{\rho_{K_p(g)}(u)}^{+\infty}f(tu)t^{n-1}dtdu,
    \]
    and then the computation continues as before.
\end{proof}

We will need a reverse H\"older-type inequality for integrals of powers of concave functions against $s$-concave functions. This inequality is known as Berwald's inequality. It was established by L. Berwald \cite{Berlem} for integration against uniform measures and for integration against $s$-concave probability densities by M. Fradelizi, J. Li, and M. Madiman \cite[Theorem 6.2]{FLM20}. The characterization of equality was later established by the first named author and E. Putterman \cite[Corollary 1.2]{LP25}.  We remind that a function $g \in L^1(\R^n)$ is a probability density if it is non-negative and integrates to $1$.
\begin{prop}[The Weighted Berwald Inequality]
\label{p:berwald}
    Fix $s> 0$ and a non-negative, concave function $f:\R^n\to \R$. Let $g:\R^n\to \R$ be a $s$-concave probability density supported on $\supp(f)$. Define the continuous function $G:(-1,\infty)\to\R$ by
    \begin{equation}
    \label{eq:berwald_function}
    G(p) = \begin{cases}
        \left(\binom{\frac{1}{s}+(n+p)}{p}\int_{\R^n}f(x)^pg(x)dx\right)^\frac{1}{p}, & p \neq 0,
        \\
        e^{\psi\left(\frac{1}{s}+n+1\right)-\psi(1)}\exp\left(\int_{\R^n}\log (f(x))g(x)dx\right),& p=0.
    \end{cases}.
    \end{equation}
    
      Then, $G$ is either strictly decreasing or constant. Furthermore, $G$ is constant if and only if $f$ and $g$ satisfy the following relations: for almost all $t \in (0,\|f\|_{L^\infty(\R^n)})$,
\begin{equation}
\int_{\left\{x\in \R^n: f(x) \geq t\right\}}g(x)dx = \left(1-\frac{t}{\|f\|_{L^\infty(\R^n)}}\right)^{\frac{1}{s}+n}.
\label{eq:berwald_equality}
\end{equation}
\end{prop}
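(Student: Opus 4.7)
The plan is to reduce the weighted Berwald inequality to the classical (un-weighted) Berwald inequality in one variable, via a rearrangement that packages both concavity hypotheses into a single concave function on an interval.

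First, by the layer-cake formula, $\int_{\R^n} f^p g\, dx = p \int_0^M t^{p-1} F(t)\, dt$, where $F(t) = \int_{\{f \geq t\}} g(x)\, dx$ and $M = \|f\|_{L^\infty(\R^n)}$. Since $f$ is concave, the superlevel sets $A_t = \{f \geq t\}$ are convex and satisfy $(1-\lambda) A_{t_1} + \lambda A_{t_2} \subseteq A_{(1-\lambda)t_1 + \lambda t_2}$. Combined with the $s$-concavity of $g$, Borell's inequality then yields that $F^\alpha$ is concave on $[0, M]$, decreasing from $1$ to $0$, where $\alpha = s/(1+ns)$. Set $d := 1/\alpha = n + 1/s$.

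Next, define $\tilde f(u) := (F^\alpha)^{-1}(u)$ for $u \in [0,1]$. As the inverse of a concave decreasing function, $\tilde f$ is itself concave and decreasing, with $\tilde f(0)=M$ and $\tilde f(1)=0$; equivalently, $\tilde f(u) = f^*(u^d)$, where $f^* = F^{-1}$ is the decreasing rearrangement of $f$ with respect to $g$. The change of variable $\lambda = u^d$ in the identity $\int f^p g\, dx = \int_0^1 f^*(\lambda)^p\, d\lambda$ then gives
\[
\int_{\R^n} f^p g\, dx \;=\; d\int_0^1 \tilde f(u)^p\, u^{d-1}\, du \;=\; \frac{1}{\omega_d}\int_{B_2^d} \tilde f(|x|)^p\, dx,
\]
interpreting the right-hand side as a radial integral over the (possibly non-integer) $d$-dimensional Euclidean ball; since $\tilde f$ is concave and decreasing, $\tilde f(|\cdot|)$ is a concave radial function on $B_2^d$ vanishing on the boundary.

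Consequently, $G(p)$ coincides with the quantity appearing in the classical Berwald inequality applied to $\tilde f(|\cdot|)$ on $B_2^d$. The standard Berwald argument is a crossing/moment-comparison: one matches the $p$-th moment of $\tilde f$ with the linear extremizer $u \mapsto M(1-u)$, uses concavity of $\tilde f$ to obtain a single sign change of the difference on $(0,1]$, and then exploits the monotonicity of the kernel $u^{q-p}$ to conclude $G(q) \geq G(p)$ for $q<p$, with equality if and only if $\tilde f$ is affine. This argument goes through verbatim for any real $d>0$. The equality case $\tilde f(u) = M(1-u)$ translates back to $F(t) = (1-t/M)^{n+1/s}$, which is exactly \eqref{eq:berwald_equality}. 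Finally, the $p=0$ case is obtained by continuity, using $\lim_{p\to 0}\bigl(\int f^p g\, dx\bigr)^{1/p} = \exp\bigl(\int \log f \cdot g\, dx\bigr)$ and $\lim_{p\to 0}\binom{d+p}{p}^{1/p} = e^{\psi(d+1)-\psi(1)}$. The main obstacle is the Borell step of identifying the exponent $\alpha = s/(1+ns)$ and verifying the concavity of $\tilde f$; once this structural reduction is in place, everything else collapses to classical Berwald in (possibly non-integer) dimension $d$.
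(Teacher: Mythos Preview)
The paper does not supply its own proof of Proposition~\ref{p:berwald}; it is quoted from the literature, with the inequality attributed to Fradelizi--Li--Madiman \cite[Theorem 6.2]{FLM20} and the equality characterization to Langharst--Putterman \cite[Corollary 1.2]{LP25}. Your argument is correct and is in fact the same strategy used in \cite{FLM20}: invoke the Borell--Brascamp--Lieb theorem to convert the $s$-concavity of the density $g$ into $\alpha$-concavity of the measure (with $\alpha=s/(1+ns)$), deduce that the distribution function $t\mapsto F(t)^\alpha$ is concave because the superlevel sets of the concave function $f$ nest as $(1-\lambda)A_{t_1}+\lambda A_{t_2}\subseteq A_{(1-\lambda)t_1+\lambda t_2}$, and then change variables to reduce to the one-dimensional Berwald inequality with weight $u^{d-1}$, $d=n+1/s$, against the concave profile $\tilde f$. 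The crossing/moment-comparison step you sketch is the classical one, and it does go through for non-integer $d>0$ since only the one-variable integral $\int_0^1 \tilde f(u)^p u^{d-1}\,du$ is needed, not an ambient $d$-dimensional ball. The equality case $\tilde f(u)=M(1-u)$ unwinds exactly to $F(t)=(1-t/M)^{n+1/s}$, matching \eqref{eq:berwald_equality}.
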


\subsection{The Covariogram}
In this section, we analyze the covariogram function $g_K$ of a convex body $K\subset \R^n$ from \eqref{eq:covario}.  The interested reader can see the recent survey \cite{GB23} for a complete and detailed review of the covariogram function.

Firstly, we note that $g_K=(\chi_K\ast \chi_{-K})$ and   $\supp(g_K)=DK$. Therefore, 
\begin{equation}
    \label{eq:covario_Fourier}
    \widehat{g_K}(y)=\widehat{\chi_K}(y)\cdot \widehat{\chi_{-K}}(y)=|\widehat{\chi_K}(y)|^2, \qquad y\in \R^n.
\end{equation}
Also, for any $x \in \R^n$, 
\begin{equation}
g_K(x) \leq g_K(o)=\vol_n(K)=\frac{1}{\vol_n(K)}\int_{\R^n}g_K(x)dx.
\label{eq:covario_zero}
\end{equation} 
The Minkowski sum of two Borel sets $A,B\subset \R^n$ is precisely $$A+B=\left\{a+b:a\in A,b\in B\right\}.$$ The covariogram then factors with respect to Minkowski sums of sets from orthogonal spaces. Indeed, for $n_1,n_2\in \N$, suppose $M\subset \R^{n_1}$ and $K\subset \R^{n_2}$ are compact, convex sets. Then,
\begin{equation}
\label{eq:factor_g_K}
\begin{aligned}
g_{M+K}\left(x_1, x_2\right) & =\vol_{n_1+n_2}\left((M+K) \cap\left(M+K+\left(x_1, x_2\right)\right)\right) \\
& =\vol_{n_1+n_2}\left(\left(M \cap\left(M+x_1\right)\right)+\left(K \cap\left(K+x_2\right)\right)\right) \\
& =\vol_{n_1}\left(M \cap\left(M+x_1\right)\right) \vol_{n_2}\left(K \cap\left(K+x_2\right)\right) 
\\
&=g_M\left(x_1\right) g_K\left(x_2\right).
\end{aligned}
\end{equation}

Recall that the parallel section function of a convex body $K$ in direction $\theta\in\s^{n-1}$ is given by
\begin{equation}
    A_{K,\theta}(t)=\vol_{n-1}(K\cap (\theta^\perp+t\theta)) = \int_{\{x\in\R^n:\langle x,\theta \rangle=t\}}\chi_K(x)dx, \qquad t\in\R.
    \label{eq:parallel_section}
\end{equation}
It follows from \eqref{eq:og_radon_fourier}, the relation between the Fourier transforms and the Radon transform, with $f=\chi_K$ that
\begin{equation}
\label{eq:parallel_fubini}
\begin{split}
    \widehat{A_{K,\theta}}(r) = \widehat{\chi_K}(r\theta),
    \end{split}
\end{equation}
where the first Fourier transform is on $\R$ and the second is on $\R^n$. On the other hand, we can apply \eqref{eq:og_radon_fourier} with $f=g_K$ and deduce
\begin{equation}
    \widehat{g_K}(r\theta) = \widehat{\mathcal{R}g_K(\theta;\cdot)}(r), \qquad r\in \R.
    \label{eq:radon_covariogram}
\end{equation}
Combining \eqref{eq:parallel_fubini}, \eqref{eq:covario_Fourier}, and \eqref{eq:radon_covariogram}, we obtain
\begin{equation}
    \widehat{\mathcal{R}g_K(\theta;\cdot)}(r)=\widehat{g_K}(r\theta)= |\widehat{A_{K,\theta}}(r)|^2, \qquad \theta\in\s^{n-1},\; r\in\R.
    \label{eq:fourier_covario_parallel}
\end{equation}

Finally, by the Brunn-Minkowski inequality, $g_K$ is $(1/n)$-concave, i.e., satisfies \eqref{eq:s_concave} with $s=\frac{1}{n}$.

\subsection{An extension of Hensley's Theorem}
\label{sec:hen}
In \cite[Theorem 3]{FM99}, M. Fradelizi showed the following  extension of Hensley's theorem: let $K$ be a convex body with center of mass at the origin, then, for $q \geq 1$,
\begin{equation}
\label{eq:fradelizi}
\begin{split}
c(q)\Gamma^\circ_q K=  \frac{1}{2} \frac{1}{(q+1)^\frac{1}{q}}\Gamma^\circ_q K &\subseteq \frac{1}{\vol_n(K)}IK 
\\
&\subseteq \frac{n}{2}\binom{n+q}{n}^{-\frac{1}{q}}\Gamma^\circ_q K=c(n,q) \Gamma^\circ_q K.
\end{split}
\end{equation}

Moreover, the first set-inclusion was anticipated. Indeed, the following proposition was proved by E. Lutwak and G. Zhang, but remained unpublished until they communicated it to R. Gardner and A. Giannopoulos \cite[Proposition 3.1]{GZ99}. This formula later appeared in the work by N. J. Kalton and A. Koldobsky \cite{KK05}, in the context of embedding $L^p$ spaces when $p<0$. See also \cite[Corollary 8.3]{GrZ99} and \cite{CH08}.
\begin{prop}
\label{p:LZ}
    Let $M\subset\R^n$ be a star body. Then,
    \[
    \left(\frac{2}{(q+1)\vol_n(M)}\right)^\frac{1}{q}\Gamma_q^\circ M \to IM 
    \qquad \mbox{ as } \qquad q\to (-1)^+. \]
\end{prop}

To see how \eqref{eq:fradelizi} improves and completes \eqref{eq:hen}, write the inequality in terms of radial functions and obtain, for every $\theta\in \s^{n-1}$ and $q\geq 1$,
\begin{align*}
c(q)\left(\frac{1}{\vol_n(K)}\int_{K}|\langle x,\theta\rangle|^qdx\right)^{-\frac{1}{q}} &\leq \frac{\vol_{n-1}(K\cap \theta^\perp)}{\vol_n(K)} 
\\
&\leq c(n,q)\left(\frac{1}{\vol_n(K)}\int_{K}|\langle x,\theta\rangle|^qdx\right)^{-\frac{1}{q}}.
\end{align*}
In fact, \cite{FM99} proved the equality conditions for this pointwise version: there is equality in the first inequality if and only if $K$ is cylindrical in the direction $\theta$ and in the second inequality if and only if $K$ is a double-cone in the direction $\theta$. Finally, selecting isotropic $K$ and $q=2$ yields, from \eqref{eq:isotropic_constant}, the formula
\begin{equation}
    \label{eq:fradelizi_2}
    c(2)L_K^{-1} \leq \vol_{n-1}(K\cap \theta^\perp) \leq c(n,2) L_K^{-1}, \qquad \theta \in \s^{n-1}.
\end{equation}
One can optimize over $n$ and show that there exist $b\in \R$ independent of $n$ such that $0<c:=c(2)<c(n,2) <b<\infty$, completing the recovery of \eqref{eq:hen}.

\section{The Geometry of Radial Mean Bodies}
\label{sec:geometry}
\subsection{Basic properties}
\label{sec:radial_def}
R. Gardner and G. Zhang \cite{GZ98} showed that the interaction between $R_p K$ and affine transformations is precisely \cite[Theorem 2.3]{GZ98}, for $T\in \operatorname{GL}_n(\R)$ and $x_0\in\R^n$,
\begin{equation}
\label{eq:R_pinvariance}
    R_p(TK+x_0)=TR_pK.
\end{equation}
By \eqref{eq:R_pinvariance}, $R_p \B$ is a dilate of $\B$, whose radius can be computed directly; see W.S. Cheung and G. Xiong \cite[Theorem 3]{XC08} for the details when $p\neq 0$.
\begin{prop}
\label{p:R_pE}
    Consider an ellipsoid $E:=T\B +x_0$ for $T\in \operatorname{GL}(n)$ and $x_0\in\R^n$. Then, 
    \begin{equation}
\label{eq:R_pB_2^n}
    R_p E =\left(\frac{2^{p+1} \omega_{n+p}}{(p+1) \omega_n \omega_{p+1}}\right)^\frac{1}{p}T\B, \quad p>-1,\; p\neq 0.
\end{equation}
For the case $p=0$, we have by taking the limit as $p\to 0$:
\begin{equation}
    R_0 E=2 \exp\left(\frac{\psi\left(\frac{1}{2}\right)-\psi\left(\frac{n}{2}+1\right)}{2}\right)T\B.
\end{equation}
\end{prop}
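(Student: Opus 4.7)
The plan is to reduce the claim to the case $K=\B$ and then perform a direct calculation. By the affine covariance \eqref{eq:R_pinvariance}, we have $R_p E = R_p(T\B+x_0) = TR_p\B$, so it suffices to prove that $R_p\B$ is the prescribed dilate of $\B$. Since rotations belong to $\operatorname{GL}(n)$, \eqref{eq:R_pinvariance} implies that $R_p\B$ is invariant under the orthogonal group $O(n)$, hence $R_p\B = r_p\B$ for some $r_p>0$. Thus the only task is to evaluate $r_p = \rho_{R_p\B}(e_n)$.

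The heart of the proof is an explicit formula for the covariogram in a fixed direction. For $r\in[0,2]$, a direct slicing argument shows that $\B\cap(\B+re_n) = \{x\in\B : |x_n|\text{ and }|x_n-r|\le 1\}$, and splitting the integration range at $x_n = r/2$ (and noting the natural symmetry in $x_n\mapsto r-x_n$) yields
\[
g_\B(re_n) = 2\omega_{n-1}\int_{r/2}^{1}(1-t^2)^{\frac{n-1}{2}}dt.
\]
A quick sanity check at $r=0$ recovers $g_\B(o)=\omega_n$, confirming the factor of $2$. Plugging this into the definition \eqref{eq:radial_ell} for $p>0$, I would swap the order of the two integrations (the region $\{(r,t):0\le r/2\le t\le 1\}$ is equivalent to $\{(r,t):0\le r\le 2t,\; 0\le t\le 1\}$) to obtain a Beta integral:
\[
\rho_{R_p\B}^{\,p} = \frac{p}{\omega_n}\int_0^{2}g_\B(re_n)\,r^{p-1}dr
= \frac{2^{p}\omega_{n-1}}{\omega_n}\, B\!\left(\tfrac{p+1}{2},\tfrac{n+1}{2}\right).
\]
Translating Beta into Gamma, applying the duplication formula \eqref{eq:duplication} to produce $\Gamma(p+3/2)$, and then rewriting the resulting ratios of Gammas as $\omega_{n+p}/(\omega_n\omega_{p+1})$ recovers \eqref{eq:R_pB_2^n}. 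The range $p\in(-1,0)$ is handled identically using the third branch of \eqref{eq:radial_ell}: I would apply the same covariogram identity, subtract the constant term $\vol_n(\B)$, and integrate by parts (or use analytic continuation of the Beta integral) to recover the same closed form, which remains valid by virtue of the $(p-n)$-homogeneity of the normalization constants.

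For the case $p=0$, there are two natural routes; I would choose whichever is cleanest. The first is to note that the right-hand side of \eqref{eq:R_pB_2^n} is continuous in $p$ and the monotonicity \eqref{eq:radial_growing} guarantees $R_p\B\to R_0\B$ in the Hausdorff metric, so it suffices to compute $\lim_{p\to 0}\rho_{R_p\B}$; taking logarithms and differentiating the product $2^{p}\Gamma((p+1)/2)/\Gamma((n+p)/2+1)$ at $p=0$ immediately yields the prefactor $2\exp\bigl(\tfrac{1}{2}(\psi(1/2)-\psi(n/2+1))\bigr)$. The alternative is to use the $p=0$ branch of \eqref{eq:radial_ell} directly: differentiating $g_\B(re_n)$ from the explicit formula above gives $\partial_rg_\B(re_n) = -\omega_{n-1}(1-r^2/4)^{(n-1)/2}$, and evaluating $\int_0^{2}(1-r^2/4)^{(n-1)/2}\log r\,dr$ via the substitution $s=r/2$ and the identity $\partial_a B(a,b) = B(a,b)(\psi(a)-\psi(a+b))$ at $a=1/2$ reproduces the same constant.

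No serious obstacle appears; the main care-points are (i) getting the covariogram constant right (the integrand must be symmetric about $x_n=r/2$, giving the $2\omega_{n-1}$), (ii) choosing the substitution $u=t^2$ to land on a clean Beta integral, and (iii) correctly bookkeeping the Gamma-function identities that convert $\Gamma((p+1)/2)\Gamma((n+1)/2)/\Gamma((n+p+2)/2)$ into the stated expression $2\omega_{n+p}/((p+1)\omega_n\omega_{p+1})$. Step (iii) is essentially the Legendre duplication formula \eqref{eq:duplication} applied once.
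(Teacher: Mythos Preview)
Your proposal is correct and is precisely the ``direct computation'' the paper alludes to (the paper itself does not supply a proof, but merely cites \cite[Theorem~3]{XC08} for the $p\neq 0$ case and invokes continuity for $p=0$). One small bookkeeping remark: step~(iii) does not actually require the Legendre duplication formula~\eqref{eq:duplication}; once you have reduced to comparing $2^p\Gamma((p+1)/2)$ with $\tfrac{2^{p+1}}{p+1}\Gamma((p+3)/2)$, the basic recursion $\Gamma(z+1)=z\Gamma(z)$ suffices.
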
 
It follows from the definition that, if $L,M\subset \R^n$ are star bodies such that $L\subset M$, then $IL\subset IM$. A similar monotonicity holds for $R_p K$. Indeed, if $K,M$ are convex bodies such that $K\subset M$, then $g_K\leq g_M$ and, therefore, it follows from the definition \eqref{eq:radial_ell} that
\begin{equation}
\label{eq:monotonic_R_pK}
R_pK \subset \left(\frac{\vol_n(M)}{\vol_n(K)}\right)^\frac{1}{p} R_p M, \qquad p>0.
\end{equation}
We mention now that some recent extensions of radial $p$th mean bodies can be found in \cite{LRZ22,LP25,HLPRY25,LX24,HL26}.  

We next return to dual querma{\ss}integrals $\widetilde W_{n-p}$ from \eqref{eq:dual_quer}. They are special cases of the so-called, for $p\in \R\setminus\{0\}$, $p$th dual mixed volumes (see \cite{LE98,LYZ04_3}) of a $L^{p}$ star $D$ and a $L^{n-p}$ star $M$:
    \begin{equation}
    \label{eq:dual_mixed}
    \widetilde V_{n-p}(D,M) = \frac{1}{n}\int_{\s^{n-1}}\rho_{D}^{p}(u)\rho_M^{n-p}(u)du.
    \end{equation}
     One has the relations $\widetilde V_n(D,M)=\vol_n(M)$  and  $\widetilde V_0(D,M)=\vol_n(D)$. Since $\rho_{\B}=|\cdot|^{-1}$, we have $\widetilde V_{n-p}(D,\B)=\widetilde W_{n-p}(D)$. Moreover, from the (-1)-homogeneity of radial functions, we have the relations, when $p\neq 0$, 
    \begin{equation}
    \begin{split}
       \widetilde V_{n-p}(D,M)
        &=\begin{cases}
           \frac{p}{n}\int_{\s^{n-1}} \int_0^{{\rho_D(u)}}\rho_M(tu)^{n-p}t^{n-1}dtdu, & p>0,
           \\
            \frac{|p|}{n}\int_{\s^{n-1}} \int_{\rho_D(u)}^{\infty}\rho_M(tu)^{n-p}t^{n-1}dtdu, & p<0,
       \end{cases}
       \\
       &=\begin{cases}
           \frac{p}{n}\int_{D}\rho_M(x)^{n-p}dx, & p>0,
           \\
            \frac{|p|}{n}\int_{\R^n\setminus D}\rho_M(x)^{n-p}dx, & p<0.
       \end{cases}
       \end{split}
       \label{eq:dual_quer_polar}
    \end{equation}

By setting $g=g_K$ and $f(x)= \frac{|p|}{n}\rho_M^{n-p}(x)$ in Proposition~\ref{p:ball_homo_formula}, we obtain the following corollary from an application of \eqref{eq:dual_quer_polar}. 
\begin{cor}
\label{cor:dual_cord}
    Let $p>-1$ and let $K$ be a convex body in $\R^n$. Then, for every $L^{n-p}$-star $M$,
    \[
    \widetilde{V}_{n-p}(R_p K,M) = \begin{cases}
    \frac{p}{n}\int_{\R^n}\rho_M(x)^{n-p}\frac{g_K(x)}{\vol_n(K)}dx, & p>0,
    \\
    \frac{p}{n}\int_{\R^n}\rho_M(x)^{n-p}\left(\frac{g_K(x)}{\vol_n(K)}-1\right)dx, & -1<p<0.
    \end{cases}
    \]
\end{cor}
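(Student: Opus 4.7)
The plan is to apply Proposition~\ref{p:ball_homo_formula} directly, as suggested, after making three identifications. First, I verify that $g_K$ satisfies the hypotheses of that proposition: by the Brunn--Minkowski inequality $g_K$ is $(1/n)$-concave on its support $DK$, hence in particular log-concave; it is bounded and has compact support, so it is integrable; and by \eqref{eq:covario_zero} it attains its essential supremum $\vol_n(K)$ at the origin, which is exactly what is needed in the $p\in(-1,0)$ case. Second, I check that $R_pK$ coincides with the Ball body $K_p(g_K)$ of \eqref{eq:kbb}: comparing \eqref{eq:radial_ell} to \eqref{eq:kbb} and using $\|g_K\|_{L^\infty(\R^n)}=g_K(o)=\vol_n(K)$ gives this for both ranges $p>0$ and $p\in(-1,0)$ by inspection. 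Third, since $\rho_M$ is $(-1)$-homogeneous, the function $f(x):=\tfrac{|p|}{n}\rho_M(x)^{n-p}$ is measurable and $(p-n)$-homogeneous on $\R^n\setminus\{o\}$, so it meets the homogeneity hypothesis of Proposition~\ref{p:ball_homo_formula}.

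With these identifications in place, for $p>0$ the first identity of Proposition~\ref{p:ball_homo_formula} gives
\[
\int_{R_pK}\frac{p}{n}\rho_M(x)^{n-p}\,dx
=\frac{1}{\vol_n(K)}\int_{\R^n}\frac{p}{n}\rho_M(x)^{n-p}g_K(x)\,dx,
\]
and the left-hand side equals $\widetilde V_{n-p}(R_pK,M)$ by \eqref{eq:dual_quer_polar}. This yields the first case of the corollary. For $p\in(-1,0)$, the second identity of Proposition~\ref{p:ball_homo_formula}, applied with $|p|$ replacing $p$ only in the prefactor, gives
\[
\int_{\R^n\setminus R_pK}\frac{|p|}{n}\rho_M(x)^{n-p}\,dx
=\frac{1}{\vol_n(K)}\int_{\R^n}\frac{|p|}{n}\rho_M(x)^{n-p}\bigl(\vol_n(K)-g_K(x)\bigr)dx.
\]
The left-hand side is again $\widetilde V_{n-p}(R_pK,M)$ by the second branch of \eqref{eq:dual_quer_polar}, while on the right-hand side I use $|p|=-p$ and factor out $\vol_n(K)$ in the denominator, producing $\frac{p}{n}\int_{\R^n}\rho_M(x)^{n-p}\bigl(\tfrac{g_K(x)}{\vol_n(K)}-1\bigr)dx$, which is the second case of the corollary.

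There is no real obstacle beyond bookkeeping: the statement is a clean specialization of the polar-coordinate identity encoded in Proposition~\ref{p:ball_homo_formula}. The only point that requires a moment of care is the sign flip when $p<0$, where the factor $|p|$ used in defining $f$ in order to get a non-negative integrand for Proposition~\ref{p:ball_homo_formula} must be converted back to $p$ at the end, and the measure $\vol_n(K)-g_K$ appearing in the second identity must be rewritten as $-\vol_n(K)\bigl(\tfrac{g_K}{\vol_n(K)}-1\bigr)$ so that the signs match the statement. One should also confirm that all integrals are finite: $\int_{\R^n}\rho_M^{n-p}g_K<\infty$ follows for $p>0$ from the boundedness and compact support of $g_K$ combined with the assumption that $M$ is an $L^{n-p}$-star, and for $p\in(-1,0)$ the difference $\vol_n(K)-g_K$ is supported in $\R^n\setminus\mathrm{int}(DK)\cup(DK)$ and bounded, so the only integrability question is at infinity, where $\rho_M^{n-p}$ is integrable against any compactly supported bounded function since $n-p>n$.
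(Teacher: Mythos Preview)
Your proof is correct and follows precisely the paper's approach: set $g=g_K$ and $f(x)=\tfrac{|p|}{n}\rho_M(x)^{n-p}$ in Proposition~\ref{p:ball_homo_formula}, then invoke \eqref{eq:dual_quer_polar}. Your final paragraph on finiteness is somewhat muddled (the function $\vol_n(K)-g_K$ is \emph{not} compactly supported, and Proposition~\ref{p:ball_homo_formula} does not require $f\ge 0$), but the correct argument---that $\rho_M^{n-p}$ is $(p-n)$-homogeneous with $p<0$, hence integrable at infinity against a bounded function---is implicit and the core logic is sound.
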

G. Xiong and W.S. Cheung had \cite{XC08} established isoperimetric inequalities for the dual querma{\ss}integrals of $R_p K$. As G. Xiong and W.S. Cheung pointed out, they are equivalent to the isoperimetric inequalities proven by G. Zhang \cite{GZ91_2} for the chord power integrals of a convex body $K$. We now show that these inequalities are immediate consequences of Theorem~\ref{t:radial_isos}, and, moreover, we establish them in more generality for the dual mixed volumes.
    
\begin{prop}
\label{p:dual_inequalities}
    Fix $p>-1$. Let $K\subset \R^n$ be a convex body and $M\subset \R^n$ be both a $L^{n-p}$-star and a $L^n$-star. Then, $\widetilde V_n(R_0 K,M)=\vol_n(M)$  and  $\widetilde V_0(R_nK,M)=\vol_n(K)$ and
   \begin{align*}
   \frac{\widetilde{V}_{n-p}(R_pK,M)}{\vol_n(M)^{\frac{n-p}{n}}\vol_n(K)^\frac{p}{n}} &\leq \frac{2^{p+1} \omega_{n+p}}{(p+1) \omega_n \omega_{p+1}},  \quad 0<p <n, \;
   \\
   \frac{\widetilde{V}_{n-p}(R_pK,M)}{\vol_n(M)^{\frac{n-p}{n}}\vol_n(K)^\frac{p}{n}} &\geq \frac{2^{p+1} \omega_{n+p}}{(p+1) \omega_n \omega_{p+1}}, \quad p\in (-1,0)\cup (n,\infty),
    \end{align*}
    with equality if and only if $K$ and $M$ are dilates of $B_2^n$ (up to null sets for the latter).    
\end{prop}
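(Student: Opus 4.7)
My plan is to factor the claimed inequality as a composition of two tools: a (possibly reverse) Hölder inequality on $\s^{n-1}$ controlling $\widetilde V_{n-p}(R_pK,M)$ in terms of $\vol_n(R_pK)$ and $\vol_n(M)$, followed by the isoperimetric inequality of Theorem~\ref{t:radial_isos} for $R_pK$, combined with the explicit value of $\vol_n(R_pB_2^n)/\vol_n(B_2^n)$ supplied by Proposition~\ref{p:R_pE}. The two endpoint identities are immediate from the polar-coordinate formula for volume: the factor $\rho_{R_0K}^0\equiv 1$ trivializes $\widetilde V_n(R_0K,M)=\frac{1}{n}\int_{\s^{n-1}}\rho_M^n\,d\theta=\vol_n(M)$, while $\widetilde V_0(R_nK,M)=\vol_n(R_nK)=\vol_n(K)$ by the $p=n$ case of Theorem~\ref{t:radial_isos}.

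For $p\ne 0,n$, I would rewrite $\rho_{R_pK}^p=(\rho_{R_pK}^n)^{p/n}$ and $\rho_M^{n-p}=(\rho_M^n)^{(n-p)/n}$ under the integral defining $\widetilde V_{n-p}(R_pK,M)$. When $0<p<n$, both exponents $p/n$ and $(n-p)/n$ lie in $(0,1)$ and sum to $1$, so Hölder's inequality with conjugate exponents $n/p$ and $n/(n-p)$ gives
\[
\widetilde V_{n-p}(R_pK,M)\;\le\;\vol_n(R_pK)^{p/n}\vol_n(M)^{(n-p)/n}.
\]
When $p>n$ or $-1<p<0$, one of these exponents is negative, and a standard reverse Hölder argument (apply ordinary Hölder to $\int \rho_M^n\,d\theta$ after factoring $\rho_M^n=(\rho_{R_pK}^p\rho_M^{n-p})^{1/\alpha}\cdot\rho_{R_pK}^{-p/\alpha}$ with $\alpha=(n-p)/n>1$, and rearrange; symmetrically for $p>n$) flips this to the opposite inequality with the same right-hand side. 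On the other side, Proposition~\ref{p:R_pE} gives
\[
\bigl(\vol_n(R_pB_2^n)/\vol_n(B_2^n)\bigr)^{p/n}=\frac{2^{p+1}\omega_{n+p}}{(p+1)\omega_n\omega_{p+1}},
\]
so raising Theorem~\ref{t:radial_isos} to the power $p/n$ compares $\vol_n(R_pK)^{p/n}$ to this constant times $\vol_n(K)^{p/n}$.

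The only technical point, really just bookkeeping, is that the direction of this second comparison depends both on the sign of $p/n$ (from the exponentiation) and on the sign of $p-n$ (from Theorem~\ref{t:radial_isos}); a short case-check verifies that in each of the three regimes $0<p<n$, $p>n$, and $-1<p<0$ the two pieces point the same way, and their concatenation yields the stated bound. Finally, equality in the Hölder (or reverse Hölder) step forces $\rho_M$ to be a positive constant multiple of $\rho_{R_pK}$ almost everywhere on $\s^{n-1}$, while equality in Theorem~\ref{t:radial_isos} forces $K$ to be an ellipsoid; Proposition~\ref{p:R_pE} then makes $R_pK$ a centered dilate of $B_2^n$ in the normalized case, so the combined equality characterization reduces to $K$ and $M$ being dilates of $B_2^n$ (up to null sets for $M$), as claimed.
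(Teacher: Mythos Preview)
Your proof is correct and follows essentially the same approach as the paper's: both combine the dual Minkowski inequalities (which the paper states as \eqref{eq:gardner_ineq} and \eqref{eq:gardner_ineq_2}, and which you rederive directly from H\"older and reverse H\"older on $\s^{n-1}$) with Theorem~\ref{t:radial_isos} and the explicit constant from Proposition~\ref{p:R_pE}. The paper only spells out the case $0<p<n$ and dismisses the others as similar, whereas you sketch the sign bookkeeping for all three regimes, but the substance is identical.
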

\begin{proof}
We begin the proof by recalling the so-called dual Minkowski's inequalities, which follow from H\"older's inequality. Let $D$ be a $L^{p}$-star and let $M$ be a $L^{n-p}$-star. Then: if $0<p <n$,
\begin{equation}
\vol_n(D)^{\frac{p}{n}}\vol_n(M)^{\frac{n-p}{n}} \geq  \widetilde V_{n-p}(D,M),
\label{eq:gardner_ineq}
\end{equation}
and, if $p\in (-\infty,0)\cup (n,+\infty)$, 
\begin{equation}
\vol_n(D)^{\frac{p}{n}}\vol_n(M)^{\frac{n-p}{n}} \leq \widetilde V_{n-p}(D,M),
\label{eq:gardner_ineq_2}
\end{equation}
In \eqref{eq:gardner_ineq} and \eqref{eq:gardner_ineq_2}, if both $D$ and $M$ are $L^n$-stars, then there is equality if and only if $D$ is a dilate of $M$ up to null sets. With these tools available, we turn to our main result. We only show the case when $0<p<n$, as the other cases are similar. From \eqref{eq:gardner_ineq}, with $D=R_p K$, we have
    \begin{equation*}
\left(\frac{\vol_n(R_p K)}{\vol_n(K)}\right)^{\frac{p}{n}} \geq \frac{\widetilde V_{n-p}(R_p K,M)}{\vol_n(K)^\frac{p}{n}\vol_n(M)^{\frac{n-p}{n}}},
\end{equation*}
and then the claim follows from \eqref{eq:iso_R_pK} (inequality) and Proposition~\ref{p:R_pE} (equality).
\end{proof}

\subsection{Polar Mean Zonoids}
\label{sec:fourier_of_RpK}
This section is dedicated to studying the $Z^\circ_p K$, the polar $p$th mean zonoids of $K$. We start by proving Theorem~\ref{t:p_n_+_1}, which shows that the polar $(p-n)$th mean zonoids and radial $p$th mean bodies, for $p> n-1$, are connected by the Fourier transform of their radial functions to the appropriate power.  With this range in mind, we define the coefficient function
\begin{equation}
\label{eq:m}
    m(p) =\begin{cases}
        -\frac{p-n}{p}\frac{2}{\pi}\Gamma\left(p-n\right)\sin\left(\frac{\pi (p-n)}{2}\right), & \text{if} \; p\neq n+2k, k\in \N,
        \\
        \frac{1}{p}(p-n)!(-1)^\frac{p-n}{2}, & \text{if} \; p=n+2k, k\in\N.
    \end{cases} 
\end{equation}
In our case $0\in \N$, and therefore $m(n)=\frac1n$. First, we use the $q$th Cosine transform from \eqref{eq:cosine_function}, to compute the Fourier transform of $\rho_{R_p K}^p$ and obtain the following result.
\begin{lem}
\label{l:fourier_large_p}
    Let $p>n-1$. Then, for $K\subset\R^n$ a convex body:
    \begin{enumerate}
        \item If $p\neq n+2k$ for any $k\in \N$, then
        \begin{equation}
        \widehat{\rho_{R_p K}^p}(x)=\frac{1}{m(p)}\frac{1}{p}\int_{\s^{n-1}}|\langle u, x \rangle|^{p-n}\rho_{R_p K}^p(u)du.
        \label{eq:radial_large_p}
    \end{equation}
    \item If $p = n+2k$ for some $k\in \N$, then
 \begin{equation}
        \widehat{\rho_{R_p K}^p}(x)=\frac{1}{m(p)}\frac{1}{p}\int_{\s^{n-1}}\left( \alpha_{p-n}-\ln |\langle u,x \rangle|\right)|\langle u,x\rangle|^{p-n}\rho_{R_p K}^p(u)du.
        \label{eq:radial_large_p_even} 
    \end{equation}

    \end{enumerate}
    
\end{lem}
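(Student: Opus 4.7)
The plan is to interpret $\rho_{R_pK}^p$ as a $(-p)$-homogeneous function on $\R^n\setminus\{o\}$ and then invoke Proposition~\ref{p:Koldobsky_cosine}, tailored to exactly this kind of negative-homogeneity setting. Concretely, set $q:=p-n$, so that the hypothesis $p>n-1$ becomes $q>-1$, and extend the function $u\mapsto\rho_{R_pK}^p(u)$ from $\s^{n-1}$ to $\R^n\setminus\{o\}$ by $(-n-q)=(-p)$-homogeneity. Because Theorem~\ref{t:gardner_zhang} guarantees $R_pK$ is an origin-symmetric convex body for $p\ge0$ and its radial function is continuous on $\s^{n-1}$, the restriction $f:=\rho_{R_pK}^p\big|_{\s^{n-1}}$ is even and continuous, which is exactly what Proposition~\ref{p:Koldobsky_cosine} demands.

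In the generic case (i), $q=p-n$ is not an even integer, so the Fourier-inversion form of Proposition~\ref{p:Koldobsky_cosine} applies directly and yields
\[
\widehat{\rho_{R_pK}^p}(x)
\;=\;
-\frac{\pi}{2\,\Gamma(q+1)\sin\!\left(\tfrac{\pi q}{2}\right)}\,
\mathcal{C}_q(f)(x).
\]
Substituting $q=p-n$ inside the $\mathcal{C}_q$ integral produces $|\langle u,x\rangle|^{p-n}$, and comparing the leading scalar with the definition
\(
m(p)=-\tfrac{2}{\pi}\Gamma(p-n+1)\sin\!\left(\tfrac{\pi(p-n)}{2}\right)
\)
gives a clean cancellation to $1/m(p)$, producing \eqref{eq:radial_large_p}. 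In the exceptional case (ii), when $q=p-n=2k$ is an even integer, I would instead use the even-integer branch of Proposition~\ref{p:Koldobsky_cosine}, which contributes the logarithmic correction $\alpha_q-\ln|\langle u,x\rangle|$. The constant becomes $\tfrac{(-1)^{q/2}}{q!}$, and since $m(n+2k)=(2k)!(-1)^k$ satisfies $(-1)^k/(2k)!=1/m(p)$ (because $(-1)^{q/2}$ is its own reciprocal when $q$ is even), this again reproduces the asserted formula \eqref{eq:radial_large_p_even}.

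The main thing to be careful about is the distributional meaning of $\widehat{\rho_{R_pK}^p}$ when $p\ge n$, since then $\rho_{R_pK}^p$ is not locally integrable near the origin and must be interpreted via Parseval's formula \eqref{eq:parseval_1}; this is, however, precisely the context of Proposition~\ref{p:Koldobsky_cosine}, whose conclusion is stated for homogeneous functions continuous away from the origin. A secondary bookkeeping point is verifying that the trigonometric-gamma constants indeed collapse to $1/m(p)$ in both regimes; this is mechanical but easy to sign-flip. With those items handled, the proof reduces to a direct citation of Proposition~\ref{p:Koldobsky_cosine} with the correct choice of $q$.
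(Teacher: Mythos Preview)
Your proposal is correct and follows essentially the same approach as the paper: both proofs are a direct application of Proposition~\ref{p:Koldobsky_cosine} with $f=\rho_{R_pK}^p$ and $q=p-n$. Your version is more detailed in verifying the constants collapse to $1/m(p)$ and in flagging the distributional interpretation for $p\ge n$, but the underlying argument is identical.
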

\begin{proof}
Both instances are direct applications of Proposition~\ref{p:Koldobsky_cosine} with the choice $f = \rho_{R_p K}^p$ and $q = p-n$.
\end{proof}

We see there is overlap between Lemma~\ref{l:fourier_large_p} and Theorem~\ref{t:fourier_small_p} when $p\in (n-1,n)$. To alleviate any concerns of the reader, we verify that Lemma~\ref{l:fourier_large_p} yields the result that $\rho_{R_p K}^p$ is positive-definite for such $p$. Indeed, $n-1<p<n$ rewrites as $-\frac{\pi}{2}<\frac{\pi}{2}(p-n)<0$. The function $\sin(t)$ is negative for such $t$. But $\Gamma(t)$ is also negative when $t\in (-1,0)$, and therefore we deduce that $\widehat{\rho_{R_p K}^p}$ is a positive distribution on $\R^n$.

\begin{proof}[Proof of Theorem~\ref{t:p_n_+_1}]
    Observe that
    \begin{align*}
        \frac{1}{p}\int_{\s^{n-1}}|\langle u, x \rangle|^{p-n}\rho_{R_p K}^p(u)du 
        &= \int_{\s^{n-1}}|\langle u, x \rangle|^{p-n}\int_0^{\rho_{R_p K}(u)}t^{p-1}dtdu
        \\
         &= \int_{\s^{n-1}}\int_0^{\rho_{R_p K}(u)}|\langle tu, x \rangle|^{p-n}t^{n-1}dtdu
        \\
        & = \int_{R_p K}|\langle y, x \rangle|^{p-n}dy
        \\
        &= \vol_n(R_{p}K) \rho_{\Gamma_{p-n}^\circ (R_p K)}^{n-p}(x)
        \\
         &= \vol_n(K)\rho_{Z^\circ_{p-n} K}^{n-p}(x),
    \end{align*}
    where we used equation~\eqref{eq:radial_ell}. The claim follows from Lemma~\ref{l:fourier_large_p}.
\end{proof}

An immediate application of Proposition~\ref{p:ball_homo_formula} with $g=g_K$ is a  formula for the radial function of $Z_p^\circ K$: for $p>-1;$ $p\neq 0$
\begin{equation}
\label{eq:new_radial_Zstar}
    \rho_{Z_{p}^\circ K}(\theta) =
        \left(\frac{1}{\vol_n(K)^2}\int_{\R^n}|\langle \theta,z\rangle|^{p}g_K(z)dz\right)^{-\frac{1}{p}}, \quad \theta\in \s^{n-1}.
\end{equation}
Using limits, we can consistently define $Z_0^\circ K$ via the radial function
\begin{equation}
\rho_{Z_{0}^\circ K}(\theta)=\exp\left(-\frac{1}{\vol_n(K)^2}\int_{\R^n}\log |\langle \theta,z\rangle|g_K(z)dz\right), \quad  \theta\in \s^{n-1}.
\label{eq:Z_o_circ}
\end{equation}

For an origin-symmetric convex set $M\subset \R^n$, its polar $M^\circ$ is the origin-symmetric convex set given by
\[
M^\circ = \left\{y\in \R^n:\langle x,y \rangle \leq 1, \; \forall \; x\in M\right\}, \, \text{thus,\;} \, \rho_{M^\circ} = \left(\max_{x\in M} |\langle x, \cdot \rangle|\right)^{-1}.
\]
We are now in a position to prove Proposition~\ref{p:Z_sets} and Theorem~\ref{t:z_opposite_chain}. We will do so in more general terms. 

For an even, $\log$-concave probability density $g:\R^n\to \R_+$, we define its polar $p$th centroid body $\Gamma_p^\circ g$ as the star-shaped set given by the radial functional
\begin{equation}
\rho_{\Gamma_p^\circ g}=
    \begin{cases}
        \left(\int_{\R^n}|\langle z,\cdot\rangle|^{p}g(z)dz\right)^{-\frac{1}{p}}, & p>-1, p\neq 0,
        \\
        \exp\left(-\int_{\R^n}\log |\langle z,\cdot\rangle|g(z)dz\right), & p=0,
        \\
        \left(\max_{z\in \supp(g)}|\langle z,\cdot\rangle|\right)^{-1} = \rho_{\supp(g)^\circ}, & p = \infty.
    \end{cases}
    \label{eq:polar_zonoid}
\end{equation}
For $p>0$, these bodies were studied thoroughly by G. Paouris \cite{GP06}. The case for all $p>-1$ appeared recently in \cite{APPS24}; in that work, the function $g$ has minimal requirements, and, therefore, $\Gamma_p^\circ g$ is not necessarily convex. In our case, the sets $\Gamma_p^\circ g$ are origin-symmetric compact, convex sets for all $p>-1$. Furthermore, they satisfy a monotonicity with respect to set-inclusion.
\begin{lem}
\label{l:chain_psis}
    Let $g:\R^n\to \R$ be an even, non-negative, log-concave probability density. Then, for all $p \in (-1,\infty)$, the polar $p$th centroid bodies $\Gamma_p^\circ g$ of $g$ are origin-symmetric convex bodies. For $p=\infty$, $\Gamma_\infty^\circ g$ is a compact, convex set. Additionally, for $-1<p<q<\infty$, $\supp(g)^\circ \subset \Gamma_q^\circ g \subset \Gamma_p^\circ g$. Finally, $p\mapsto \Gamma^\circ_p g$ is continuous in the Hausdorff metric for $p>-1$.
\end{lem}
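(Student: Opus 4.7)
My plan is to reduce the convexity assertions to Berck's Theorem~\ref{t:berck} by realizing each $\Gamma_p^\circ g$ as a dilate of $\Gamma_p^\circ K_{n+p}(g)$, where $K_{n+p}(g)$ is the associated K. Ball body from \eqref{eq:kbb}, and to deduce the chain of inclusions from monotonicity of $L^q$-norms on probability spaces.

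Fix $p\in(-1,\infty)\setminus\{0\}$. Since $n+p>0$ and $g$ is an even, log-concave probability density, Theorem~\ref{t:radial_ball} guarantees that $K_{n+p}(g)$ is an origin-symmetric convex body. As the function $x\mapsto|\langle x,\theta\rangle|^p$ is homogeneous of degree $p=(n+p)-n$, Proposition~\ref{p:ball_homo_formula} (first identity, applied with Ball parameter $n+p>0$) yields
\[
\int_{\R^n}|\langle x,\theta\rangle|^p g(x)\, dx=\|g\|_{L^\infty(\R^n)}\int_{K_{n+p}(g)}|\langle x,\theta\rangle|^p\, dx.
\]
Combining with \eqref{eq:L_p}, this identifies $\Gamma_p^\circ g$ as a positive dilate of $\Gamma_p^\circ K_{n+p}(g)$, whose origin-symmetric convexity is precisely Theorem~\ref{t:berck}. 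For $p=0$, I would pass to the limit $p\to 0$ in the $L^p$ formulation, using uniform control afforded by the fact that a log-concave probability density has all polynomial moments. For $p=\infty$, the identity $\Gamma_\infty^\circ g=\supp(g)^\circ$ is part of the definition; since $\supp(g)$ is an origin-symmetric convex set whose interior contains $o$ (forced by $g$ being even and having unit integral on $\R^n$), its polar is an origin-symmetric compact convex set.

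For the chain of inclusions, I interpret
\[
\rho_{\Gamma_q^\circ g}(\theta)^{-1}=\bigl\|\langle\cdot,\theta\rangle\bigr\|_{L^q(g\, dx)}
\]
as the $L^q$-norm of the linear functional $\langle\cdot,\theta\rangle$ on the probability space $(\R^n,g(x)\, dx)$. Classical monotonicity of $L^q$-norms on a probability space, extended continuously through $q=0$ and up to $q=\infty$, yields for $-1<p<q<\infty$
\[
\rho_{\supp(g)^\circ}(\theta)^{-1}=\max_{x\in\supp(g)}|\langle x,\theta\rangle|\geq\bigl\|\langle\cdot,\theta\rangle\bigr\|_{L^q(g\, dx)}\geq\bigl\|\langle\cdot,\theta\rangle\bigr\|_{L^p(g\, dx)},
\]
which, after inverting, is precisely $\supp(g)^\circ\subset\Gamma_q^\circ g\subset\Gamma_p^\circ g$.

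The main technical obstacle I anticipate lies in the range $p\in(-1,0)$, where I must confirm integrability of $|\langle x,\theta\rangle|^p$ both against $g(x)\, dx$ and against the uniform measure on $K_{n+p}(g)$; this rests on the hypothesis $p>-1$ together with the fact that neither $\supp(g)$ nor $K_{n+p}(g)$ is concentrated on $\theta^\perp$. A secondary delicacy is the logarithmic limit $p\to 0$, which I would handle by dominated convergence on the defining integrals rather than by any deeper argument.
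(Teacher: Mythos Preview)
Your proposal is correct and follows essentially the same route as the paper: reduce $\Gamma_p^\circ g$ to a dilate of $\Gamma_p^\circ K_{n+p}(g)$ via Proposition~\ref{p:ball_homo_formula}, invoke Berck's Theorem~\ref{t:berck}, handle $p=0$ and $p=\infty$ by limits and the definition respectively, and obtain the inclusions from Jensen's inequality (monotonicity of $L^q$-norms on a probability space). The integrability and limit concerns you flag are genuine but routine, and the paper does not dwell on them either.
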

\begin{proof}
It suffices to show that the case when $p\neq 0$, as the convexity and compactness of this case follows by continuity. The fact $\Gamma_0^\circ g$ has non-empty interior is an application of the second claim, i.e. the fact that $\Gamma_0^\circ g\supset \Gamma_p^\circ g$ for any $p>0$.

We begin with the limiting case, $p=\infty$. Note that $\supp(g)$ is a convex set with non-empty interior, since $g$ is $\log$-concave (which, by our definition, is not identically zero). Moreover, since $g$ is even, $\supp(g)$ is origin-symmetric; therefore, there exists $\epsilon>0$ such that $o\in \epsilon \B \subset \text{int}(\supp(g))$. Consequently, since polarity is order-reversing, we have $\supp(g)^\circ\subset \frac{1}{\epsilon}\B$. In particular, $\supp(g)^\circ$ is always a compact, convex set containing the origin.

We now turn to the case when $p\in (-1,0)\cup(0,\infty)$. By applying Proposition~\ref{p:ball_homo_formula}, with $p-n$ replaced by $p$ and $f=|\langle \theta,\cdot \rangle|^p$, we obtain from \eqref{eq:polar_zonoid} the identity
    \[
    \rho_{\Gamma_p^\circ g}(\theta)=\|g\|_{L^\infty(\R^n)}^{-\frac{1}{p}}\left(\int_{K_{n+p}(g)}|\langle \theta,z\rangle|^{p}dz\right)^{-\frac{1}{p}}.
    \]
    Where $K_{n+p}(g)$ is the $(n+p)$th Ball body of $g$, which is the convex body defined in Ball's theorem, Theorem~\ref{t:radial_ball}. Consequently, we have established the identity
    \[
    \Gamma_p^\circ g = \|g\|_{L^\infty(\R^n)}^{-\frac{1}{p}}\vol_n(K_{n+p} (g))^{-\frac{1}{p}} \Gamma_p^\circ K_{n+p}(g).
    \]
    We deduce that $\Gamma_p^\circ g$ are convex bodies via an application of Berck's theorem, Theorem~\ref{t:berck}.

    For the set-inclusion, we apply Jensen's inequality to \eqref{eq:polar_zonoid} and obtain, if $p<q$, then $\rho_{\Gamma_q^\circ g} \leq \rho_{\Gamma_p^\circ g}$ pointwise. Jensen's inequality also yields the continuity. Finally, by definition, we have $$\lim_{p\to +\infty} \rho_{\Gamma_p^\circ g} = \left(\max_{x\in \supp(g)} |\langle x, \cdot \rangle|\right)^{-1} = \rho_{\supp(g)^\circ}.$$ The claim follows.
\end{proof}

Lemma~\ref{l:chain_psis} implies Proposition~\ref{p:Z_sets}, since $Z^\circ_p K = \Gamma_p^\circ \left(\frac{g_K}{\vol_n(K)^2}\right)$. We now state the following theorem, which is a reversal of Lemma~\ref{l:chain_psis}. Indeed, recall that $s$-concave functions, $s\geq 0$, are log-concave on their support.
   
\begin{thm}
\label{t:opposite_chain_psis}
    Let $g:\R^n\to\R$ be an even, $s$-concave probability density, $s > 0$. Then, for $-1<p<q<\infty$, $$\kappa_s(p)\Gamma_p^\circ g \subseteq \kappa_s(q)\Gamma_q^\circ g \subseteq \supp(g)^\circ,$$ where $\kappa_s(p) = \dbinom{\frac{1}{s}+n+p}{p}^{-\frac{1}{p}}$ for $p\neq 0$ and $\kappa_s(0)=e^{\psi(1)-\psi\left(\frac{1}{s}+n+1\right)}$.
    
       If $n=1$, there is equality if and only if there exists $\rho>0$ such that $g(t)=\frac{1+s}{2s}\rho(1-\rho|t|)_+^\frac{1}{s}$, i.e. $g$ is an even $s$-affine function on its support $[-\frac{1}{\rho},\frac{1}{\rho}]$ normalized to probability. For $n\geq 2$, there does not exist an $s$-concave function $g$ that yields equality.
\end{thm}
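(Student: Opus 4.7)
The plan is to apply the weighted Berwald inequality (Proposition~\ref{p:berwald}) in each direction $\theta \in \s^{n-1}$, with the non-negativity hypothesis on $f$ handled by passing to the relevant half-space through the evenness of $g$. Fixing $\theta \in \s^{n-1}$ and setting $H_\theta^+ = \{z : \langle z,\theta\rangle \geq 0\}$, the function $\tilde g := 2g\chi_{H_\theta^+}$ is an $s$-concave probability density on $\supp(g) \cap H_\theta^+$, and on this convex set $f(z) := \langle z, \theta\rangle$ is non-negative and concave with $\|f\|_\infty = M_\theta := h_{\supp(g)}(\theta) = \rho_{\supp(g)^\circ}(\theta)^{-1}$. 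By evenness, $\int |\langle z,\theta\rangle|^p g\,dz = \int f^p \tilde g\,dz$, so the Berwald function in \eqref{eq:berwald_function} for $(f,\tilde g)$ simplifies to
\[
G(p) = \kappa_s(p)^{-1}\rho_{\Gamma_p^\circ g}(\theta)^{-1}, \qquad p \in (-1,\infty),
\]
with the $p=0$ case by continuity.

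Proposition~\ref{p:berwald} says $G$ is non-increasing in $p$, and rewriting this as a pointwise comparison of radial functions gives the first inclusion $\kappa_s(p)\Gamma_p^\circ g \subseteq \kappa_s(q)\Gamma_q^\circ g$ for $-1<p<q<\infty$. For the second inclusion, since $s>0$ the support of $g$ is compact, so $M_\theta < \infty$; as $q\to\infty$ one checks (by Stirling) that $\binom{1/s+n+q}{q}^{1/q} \to 1$ while $(\int f^q \tilde g)^{1/q} \to M_\theta$, hence $\lim_{q\to\infty} G(q) = \rho_{\supp(g)^\circ}(\theta)^{-1}$, and the monotonicity of $G$ produces $\kappa_s(q)\rho_{\Gamma_q^\circ g}(\theta) \leq \rho_{\supp(g)^\circ}(\theta)$ for every finite $q$.

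For equality, suppose $\kappa_s(p)\Gamma_p^\circ g = \kappa_s(q)\Gamma_q^\circ g$ for some $-1<p<q<\infty$; then $G(p)=G(q)$ for every $\theta$, so by monotonicity and continuity $G$ is constant on $(-1,\infty)$ for every $\theta$. The equality criterion \eqref{eq:berwald_equality} applied to $(f,\tilde g)$ and differentiated in $t$ then forces, for every $\theta\in\s^{n-1}$,
\[
A_{g,\theta}(t) = \frac{1/s+n}{2M_\theta}\left(1 - \frac{|t|}{M_\theta}\right)_+^{1/s+n-1}.
\]
When $n=1$ the marginal $A_{g,\theta}$ is $g$ itself, and the displayed formula is precisely $g(t)=\frac{1+s}{2s}\rho(1-\rho|t|)_+^{1/s}$ with $\rho = 1/M_\theta$; a direct Beta-integral computation of $\int |t|^p g(t)\,dt$ conversely confirms that this family realizes equality in the Berwald monotonicity.

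The main obstacle is ruling out equality when $n\ge 2$; the plan is a three-step rigidity argument. Integrating the prescribed marginal against $t^2$ gives
\[
\int_{\R^n}\langle z,\theta\rangle^2 g(z)\,dz = \frac{2M_\theta^2}{(1/s+n+1)(1/s+n+2)};
\]
the left-hand side is a quadratic form in $\theta$, so $h_{\supp(g)}(\theta)^2$ is quadratic in $\theta$, forcing $\supp(g)$ to be an ellipsoid. Applying a suitable $T\in\operatorname{GL}_n(\R)$ to $g$ (which preserves both the equality condition and the marginal form, up to relabeling directions), we may assume $\supp(g)=\B$, whence $M_\theta\equiv 1$ and the prescribed marginal is independent of $\theta$; by the injectivity of the Radon transform on $L^1(\R^n)$, $g$ is radial, $g(z)=\phi(|z|)$. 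The marginal of a radial density is the Abel-type integral $A_{g,\theta}(t)=c_n\int_{|t|}^1 \phi(s)(s^2-t^2)^{(n-3)/2}s\,ds$, and a standard boundary analysis shows that if $\phi(1-\xi)\sim c\xi^\beta$ as $\xi\to 0^+$ then $A_{g,\theta}(1-\varepsilon)\sim\varepsilon^{\beta+(n-1)/2}$. Matching to the required decay $\varepsilon^{1/s+n-1}$ forces $\beta = 1/s + (n-1)/2$, i.e.\ $\phi^s(1-\xi)\sim \xi^{1+(n-1)s/2}$. On the other hand, $\phi^s$ is a concave function on $[0,1]$ with $\phi^s(0)>0$ and $\phi^s(1)=0$, so a one-line chord comparison (the secant from $(0,\phi^s(0))$ to $(1,0)$ lies below $\phi^s$) gives $\phi^s(1-\xi)\geq c\xi$ near $\xi=0$. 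Since $1+(n-1)s/2>1$ whenever $n\ge 2$ and $s>0$, this contradicts the required decay, ruling out any such $g$.
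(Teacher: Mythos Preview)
For the set inclusions and the $n=1$ equality characterization, your argument coincides with the paper's: both feed $f(z)=\langle z,\theta\rangle_+$ and the half-space density $2g\chi_{H_\theta^+}$ into Proposition~\ref{p:berwald}, and you are in fact more explicit than the paper about the limiting step $q\to\infty$ that yields the outer inclusion $\kappa_s(q)\Gamma_q^\circ g\subseteq\supp(g)^\circ$. The substantive divergence is in ruling out equality for $n\ge 2$. The paper stays on the Fourier side: from the marginal prescription $\mathcal{R}g(\theta;t)=C_\theta(1-\rho_{\supp(g)^\circ}(\theta)|t|)_+^{1/s+n-1}$ it computes $\hat g(r\theta)$ via two integrations by parts, finds $\hat g(r\theta)\sim c_\theta r^{-2}$ with $c_\theta>0$, and concludes $(2\pi)^n g(0)=\int_{\R^n}\hat g=+\infty$. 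Your route is geometric: the second moment of the prescribed marginal makes $h_{\supp(g)}^2$ a quadratic form, hence $\supp(g)$ an ellipsoid; a $\mathrm{GL}_n$ reduction and Radon injectivity then force $g$ to be radial; and finally the boundary asymptotics of the Abel transform clash with the concavity of $\phi^s$. Your approach is more elementary, avoiding Fourier inversion entirely, at the price of the two reduction steps; the paper's is more direct but leans on pointwise inversion for a transform that is not in $L^1$, which itself needs some care to justify.

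One soft spot in your boundary step: the exponent-matching presupposes that $\phi(1-\xi)\sim c\xi^\beta$ for some $\beta$, which is not given a priori (and you also tacitly assume $\phi(1)=0$). The clean fix is to bypass matching altogether. Concavity of $\phi^s$ on $[0,1]$ with $\phi^s(0)>0$ gives the unconditional chord bound $\phi^s(1-\xi)\ge \xi\,\phi^s(0)$, hence $\phi(1-\xi)\ge c\,\xi^{1/s}$; inserting this directly into the Abel integral yields
\[
A_{g,\theta}(1-\varepsilon)\;\ge\; C\int_0^\varepsilon \xi^{1/s}(\varepsilon-\xi)^{(n-3)/2}\,d\xi \;=\; C'\,\varepsilon^{1/s+(n-1)/2},
\]
while the prescription demands $A_{g,\theta}(1-\varepsilon)=C''\varepsilon^{1/s+n-1}$. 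Since $(n-1)/2<n-1$ for $n\ge 2$, these are incompatible as $\varepsilon\to 0$, and the contradiction goes through without any asymptotic hypothesis on $\phi$. (If $\phi(1)>0$ the lower bound on the marginal is only stronger.)
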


First, we show that Theorem~\ref{t:opposite_chain_psis} implies Theorem~\ref{t:z_opposite_chain}. 
\begin{proof}[Proof of Theorem~\ref{t:z_opposite_chain}]
    In light of \eqref{eq:new_radial_Zstar} and \eqref{eq:Z_o_circ}, the claimed set-inclusions are immediate by setting $g=g_K/\vol_n(K)^2$ and $s=\frac{1}{n}$ in Theorem~\ref{t:opposite_chain_psis}.
\end{proof}

Next, we need the following rudimentary lemma.
\begin{lem}
\label{l:QE}
    Let $h:\R_+\to \R_+$ be a non-identically zero, absolutely continuous function that monotonically decreases to zero. Define the function
    \[
    g(r)=\int_0^\infty h(t)\sin \left(rt\right)dt.
\]
Then, $g\geq 0$ and, for every $n \geq 1$, $\int_0^\infty g(r)r^{n-1}dr=+\infty$.
\end{lem}
\begin{proof}
Since $h$ is absolutely continuous, it is differentiable almost everywhere, and we are justified in applying integration by parts. Viewing the function $t\mapsto \sin(rt)$ as the derivative of $\frac{1-\cos(rt)}{r}$, the boundary terms vanish because $h(t)\to 0$ as $t\to\infty$ and $1-\cos(0)=0$. Thus, we obtain
\[
g(r)=\int_0^\infty \frac{1-\cos(rt)}{r}\left(-h^\prime(t)\right)dt.
\]
Since $1-\cos(rt) \geq 0$, $r>0$, and $h^\prime \leq 0$ a.e. due to monotonicity, it immediately follows that $g\geq 0$. To show divergence, we apply Tonelli's theorem to swap the order of integration:
\[
\int_0^\infty g(r)r^{n-1}dr = \int_0^\infty \left(-h^\prime(t)\right) \left( \int_0^\infty (1-\cos(rt))r^{n-2} dr \right) dt.
\]
Substituting $u = rt$ into the inner integral yields
\[
\int_0^\infty (1-\cos(rt))r^{n-2} dr = t^{-(n-1)} \int_0^\infty (1-\cos u)u^{n-2} du.
\]
The integral $\int_0^\infty (1-\cos u)u^{n-2} du$ diverges to $+\infty$ for all $n \geq 1$ due to its behavior at infinity. Since $h$ is not identically zero, $\int_0^\infty (-h'(t)) t^{-(n-1)} dt > 0$, and we complete the proof.
\end{proof}

\begin{proof}[Proof of Theorem~\ref{t:opposite_chain_psis}]
Henceforth, we fix $-1<p<q$ and assume that $p,q\neq 0$; the case when either of them is zero follows by continuity. Next, we observe that, since $g$ is even,
\begin{equation}
\label{eq:new_radial_Zstar_again}
\rho_{\Gamma_p^\circ g}(\theta) = \left(2\int_{\R^n}\langle \theta,z\rangle_+^p g(z)dz\right)^{-\frac{1}{p}}, \qquad \theta\in\s^{n-1}.
\end{equation}
 We use Proposition~\ref{p:berwald}, with the concave function $f(x)=\langle x,\theta \rangle_+$ and the $s$-concave probability density $2\cdot g$ on $\supp(f)= \{z\in \R^n:\langle \theta,z \rangle \geq 0\}$, and arrive at the inequality, for $p\neq 0$,
    \begin{equation}
        \label{eq:using_berwald}
        \begin{split}
        \dbinom{\frac{1}{s}+n+q}{q}^{\frac{1}{q}}&\left(2\int_{\R^n}  \langle \theta,z\rangle_+^q g(z)dz\right)^\frac{1}{q} 
        \\
        &\leq {\dbinom{\frac{1}{s}+n+p}{p}^{\frac{1}{p}}}\left(2\int_{\R^n} \langle  \theta,z\rangle_+^{p} g(z)dz\right)^{\frac{1}{p}}.
        \end{split}
    \end{equation}
    Inserting \eqref{eq:using_berwald} into \eqref{eq:new_radial_Zstar_again}, we obtain $$\dbinom{\frac{1}{s}+n+p}{p}^{-\frac{1}{p}}\rho_{\Gamma_p^\circ g}  \leq \dbinom{\frac{1}{s}+n+q}{q}^{-\frac{1}{q}}\rho_{\Gamma_q^\circ g}.$$ The claimed set-inclusion follows.

    As for the case of equality, set in \eqref{eq:berwald_equality} $f(z)=\langle z,\theta \rangle_+$ and $$\|f\|_{L^\infty(\R^n)}=\max_{z\in \supp(g)}\langle z,\theta\rangle_+=\rho_{\supp(g)^\circ}(\theta)^{-1}$$ to deduce that there is equality if and only if $g$ satisfies,
    \begin{equation}
    \label{eq:using_berwald_equality}
\int_{\left\{z\in \R^n: \langle z,\theta \rangle \geq t\right\}}\!\!\!\!\!\!\!\!\!\!\!\!\!\!\!\!\!\!g(z)dz = \frac{1}{2}\left(1-\rho_{\supp(g)^\circ}(\theta)t\right)_+^{\frac{1}{s}+n}, \quad \text{for } t>0\;\text{ and } \theta\in\s^{n-1}.
\end{equation}

Suppose $n=1$ and take $\theta=+1$. Then, \eqref{eq:using_berwald_equality} becomes, for $\rho>0$ a fixed constant,
\begin{equation}
    \label{eq:using_berwald_equality_2}
\int_t^{\infty}g(z)dz = \frac{1}{2}\left(1-\rho t\right)_+^{\frac{1}{s}+1}, \quad \text{for } t>0.
\end{equation}
Differentiating \eqref{eq:using_berwald_equality_2}, and using that $g$ is even, we obtain $$g(t)=\frac{1+s}{2s}\rho\left(1-\rho |t|\right)_+^{\frac{1}{s}},$$ as claimed.

Suppose $n\geq 2$. By taking the derivative of \eqref{eq:using_berwald_equality} in $t$ and then using that $g$ is even, we obtain, for $t\in \R$ and $\theta\in\s^{n-1}$,
    \begin{equation}
    \label{eq:using_berwald_equality_3}
\mathcal{R}g(\theta;t) = \left(\frac{1+ns}{2s}\right)\rho_{\supp(g)^\circ}(\theta)\left(1-\rho_{\supp(g)^\circ}(\theta)|t|\right)^{\frac{1}{s}+n-1}_+.
\end{equation}
Next, by applying the Fourier transform in the variable $t$ in \eqref{eq:using_berwald_equality_3}, we obtain from \eqref{eq:og_radon_fourier}
\begin{equation}
\label{eq:convolution_polynomial}
\hat{g}(r\theta)=\left(\frac{1+ns}{2s}\right)\int_{-1}^1 \left(1-|t|\right)^{\frac{1}{s}+n-1}e^{-irt\rho_{\supp(g)^\circ}^{-1}(\theta)}dt, \qquad r>0,\; \theta\in\s^{n-1}.
\end{equation}
From the fact that $g$ is even, we can write
\begin{equation}
\label{eq:convolution_polynomial_2}
\hat{g}(r\theta)=\left(\frac{1+ns}{s}\right)\int_0^1 \left(1-t\right)^{\frac{1}{s}+n-1}\!\cos\left(tr\rho_{\supp(g)^\circ}^{-1}(\theta)\right)dt, \,\, r>0,\; \theta\in\s^{n-1}.
\end{equation}

Let $c=\left(\frac{1+ns}{s}\right)\left(\frac{1+ns}{s}-1\right)$. Since $\frac{1}{s}+n-1>\frac{1}{s}+n-2\geq \frac{1}{s} \geq 0$, we have $c\geq 0$. Therefore, from an application of integration by parts and a variable substitution, we have the formula, for $r>0$ and $\theta\in\s^{n-1}$
\[
\hat{g}(r\theta)=c\cdot \frac{\rho_{\supp(g)^\circ}(\theta)^2}{r}\int_0^{\rho_{\supp(g)^\circ}(\theta)^{-1}}\!\!\!\!\!\!\!\!\!\!\!\!\!\!\!\!\!\!(1-t\rho_{\supp(g)^\circ}(\theta))^{\frac{1}{s}+n-2}\sin\left(tr\right)dt.
\]
Notice that the function $$h(t)=\left(1-t\rho_{\supp(g)^\circ}(\theta)^{-1}\right)^{\frac{1}{s}+n-2} 1_{[0,\rho_{\supp(g)^\circ}(\theta)^{-1}]}(t)$$ is decreasing to zero on $\R_+$. Consequently, we have by Lemma~\ref{l:QE} that $\hat g$ is positive and, for every direction $\theta,\,\int_0^{+\infty}\hat{g}(r\theta)r^{n-1}dr = +\infty.$ We would like to then invoke \eqref{eq:inversion} and deduce
\begin{equation}
\label{eq:bad}
(2\pi)^ng(o) = \int_{\R^n}\widehat{g}(x) dx = \int_{\s^{n-1}}\int_0^{\infty}\hat{g}(r\theta)r^{n-1}dr=+\infty.
\end{equation}
If this were the case, we would be done; an $s$-concave function must be finite on its support, and thus, equality would never be obtained.

We now make the identity \eqref{eq:bad} rigorous. Fix $\varepsilon>0$ and define the mollifier $ \varphi_\varepsilon (x) = \left(\frac{1}{2\pi\varepsilon}\right)^{n/2}e^{-\frac{|x|^2}{2\varepsilon}}$. Note that $\hat{\varphi_\varepsilon} (\xi) = e^{-\frac{\varepsilon|\xi|^2}{2}} $. Thus, we have from the formula \eqref{eq:parseval_2} and the fact that $g$ is even,
\begin{equation*}
    \frac{1}{(2\pi)^n}\int_{\R^n} \hat{g}(\xi)e^{-\frac{\varepsilon|\xi|^2}{2}} d\xi =\frac{\langle \widehat{g}, \hat{\varphi_\varepsilon}  \rangle}{(2\pi)^n}
    = \langle {g}, {{\varphi_\varepsilon}}  \, \rangle = \frac{1}{(2\pi \varepsilon)^\frac{n}{2}}\int_{\R^n}g(z)e^{-\frac{|z|^2}{2\varepsilon}}dz.
\end{equation*}
We now send $\varepsilon\to 0$; by the monotone convergence theorem, the first integral diverges to infinity, while the last integral converges to $g(o)$, completing the verification of \eqref{eq:bad}.  
\end{proof}

\begin{rem}
    For an origin-symmetric convex body $M$, one may use Berwald's inequality in the case of the Lebesgue measure to obtain that the sets $$\dbinom{n+q}{q}^{-\frac{1}{q}}\Gamma_q^\circ M$$ are decreasing with respect to set-inclusion for $q>-1$ ; see \cite{MP89,KM12}. Indeed, this follows from Proposition~\ref{p:berwald} with $s=\infty$, $f=\langle \cdot, \theta \rangle\cdot \chi_M$ and $g=\vol_n(M)^{-1}\cdot \chi_M$. However, this will not be sharp for $Z^\circ_p K$; when employing \eqref{eq:mean_zonoid_relate}, one obtains a coefficient $\vol_n(R_{n+q}K)^{-q}$, and one must use an auxiliary inequality to remove these coefficients. In particular, one will have to utilize both the Haddad-Ludwig (Theorem~\ref{t:radial_isos}) and Gardner-Zhang \eqref{eq:radial_set_inclusion} inequalities for $R_p K$, which have conflicting equality characterizations.
\end{rem}

For a star body $M\subset \R^n$ the $L^p$-Blaschke-Santal\'o inequality, $p \geq 1$, by E. Lutwak and G. Zhang \cite{LZ97} is precisely that
\begin{equation}
    \label{eq:LZ}
    \vol_n(M)\vol_n(\Gamma_p^\circ M) \leq \vol_n(\B)\vol_n(\Gamma_p^\circ \B),
\end{equation}
with equality if and only if $M$ is an ellipsoid. Here, $\Gamma_p^\circ \B$ is a particular dilate of $\B$. See \cite{CG06} for another proof of \eqref{eq:LZ}. It was subsequently proven by R. Adamczak, P. Paouris, G. Pivovarov, and P. Simanjuntak that the inequality in \eqref{eq:LZ} holds for $p\in (0,1)$ \cite[Theorem 2.2]{APPS24} and for $\{p\in (-1,0) :n=pk,k\in \Z\}$ \cite[Theorem 2.3]{APPS24}.

\begin{proof}[Proof of Theorem~\ref{t:polar_BS}]
Given the assumptions on $p$, we can apply \eqref{eq:LZ} with $M=R_{n+p }K$ and obtain from \eqref{eq:mean_zonoid_relate} the inequality
\[
\vol_n(K)\vol_n(Z_p^\circ K)\leq \vol_n(\B)\vol_n(\Gamma_p^\circ \B)\left(\frac{\vol_n(R_{n+p} K)}{\vol_n(K)}\right)^{-\left(1+\frac{n}{p}\right)}.
\]
We take into account the sign of $p$. If $p>0$, then the exponent $-\left(1+\frac{n}{p}\right)$ is negative and $n+p>n$; thus, we use \eqref{eq:iso_R_pK_two} to conclude. On the other hand, if $p\in (-1,0)$, then the exponent $-\left(1+\frac{n}{p}\right)$ is positive and $n+p <n$; in this case, we use \eqref{eq:iso_R_pK} to conclude. The equality conditions are inherited from the Haddad-Ludwig inequalities.
\end{proof}

\begin{rem} We used the epithet ``polar $p$th mean zonoid'' for $Z^\circ_p K$ because D. Xi, L. Guo and G. Leng \cite{XGL14} showed that the set $Z_p K=(Z^\circ_p K)^\circ$, is precisely the $L^p$ version of the mean zonoid introduced by G. Zhang \cite{Zhang91}.
    In a similar strategy to the proof of Theorem~\ref{t:polar_BS}, one can use the Busemann-Petty centroid inequality by E. Lutwak, D. Yang and G. Zhang \cite{LYZ00} and Theorem~\ref{t:radial_isos} to prove \cite[Theorem 1]{XGL14}, which is a sharp affine isoperimetric inequality for the $p$th mean zonoids when $p\geq 1$. In fact, with this inequality in hand, one can combine it with the renowned Blaschke-Santal\'o inequality (see e.g. the survey \cite{FMZ23}) for another proof of Theorem~\ref{t:polar_BS} when $p\geq 1$.
\end{rem}

\subsection{Radial Mean Bodies of the Cube}
In light of Theorem~\ref{t:use_slicing}, it is natural to ask if $R_p K$ is close to an ellipsoid in the sense of Hensley's theorem. This section is dedicated to showing this is not the case for $p\in [n-1,n)$. This fact is interesting by itself and, moreover, it illustrates that Theorem~\ref{t:use_slicing} does not follow immediately from Theorem~\ref{t:fourier_small_p}. As per usual, the confounding example is the cube. We will need the covariogram of the interval $[-\frac{1}{2},\frac{1}{2}]:$ 
\begin{equation}g_{[-\frac{1}{2},\frac{1}{2}]}(t)=
\begin{cases}
    1-|t|, & |t|\leq 1,
    \\
    0, & \text{otherwise}.
    \end{cases}
    \label{eq:covario_line}
\end{equation}
We introduce the notation $Q_n:=\left[-\frac{1}{2},\frac{1}{2}\right]^n$ and denote by $e_i$ the usual canonical basis vectors. Recall the $\ell^\infty$ norm of $x\in\R^n$ is $\|x\|_\infty=\max_{1\leq i \leq n}|x_i|$.
\begin{figure}[ht]
\centering
\begin{subfigure}[b]{0.49\textwidth}
    \centering
    \includegraphics[width=\textwidth]{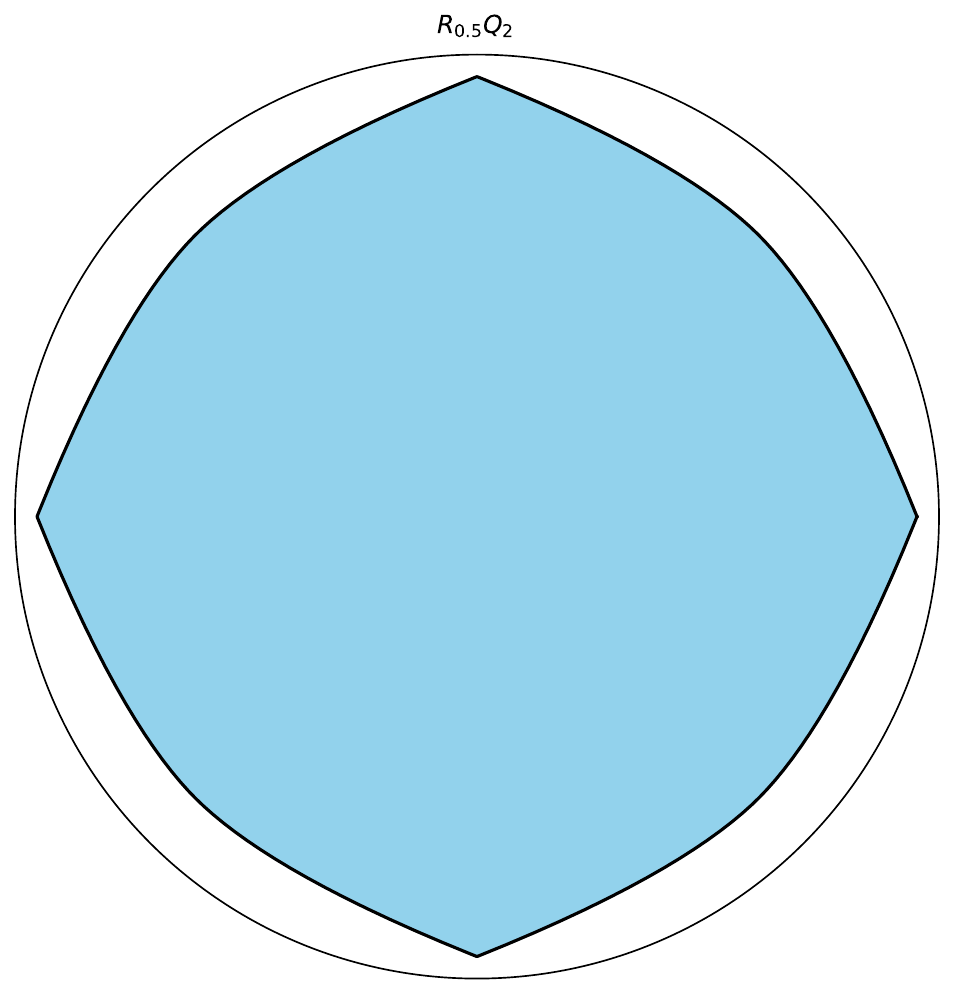}
    \label{fig:R0.5}
  \end{subfigure}
  \hfill
  \begin{subfigure}[b]{0.49\textwidth}
    \centering
    \includegraphics[width=\textwidth]{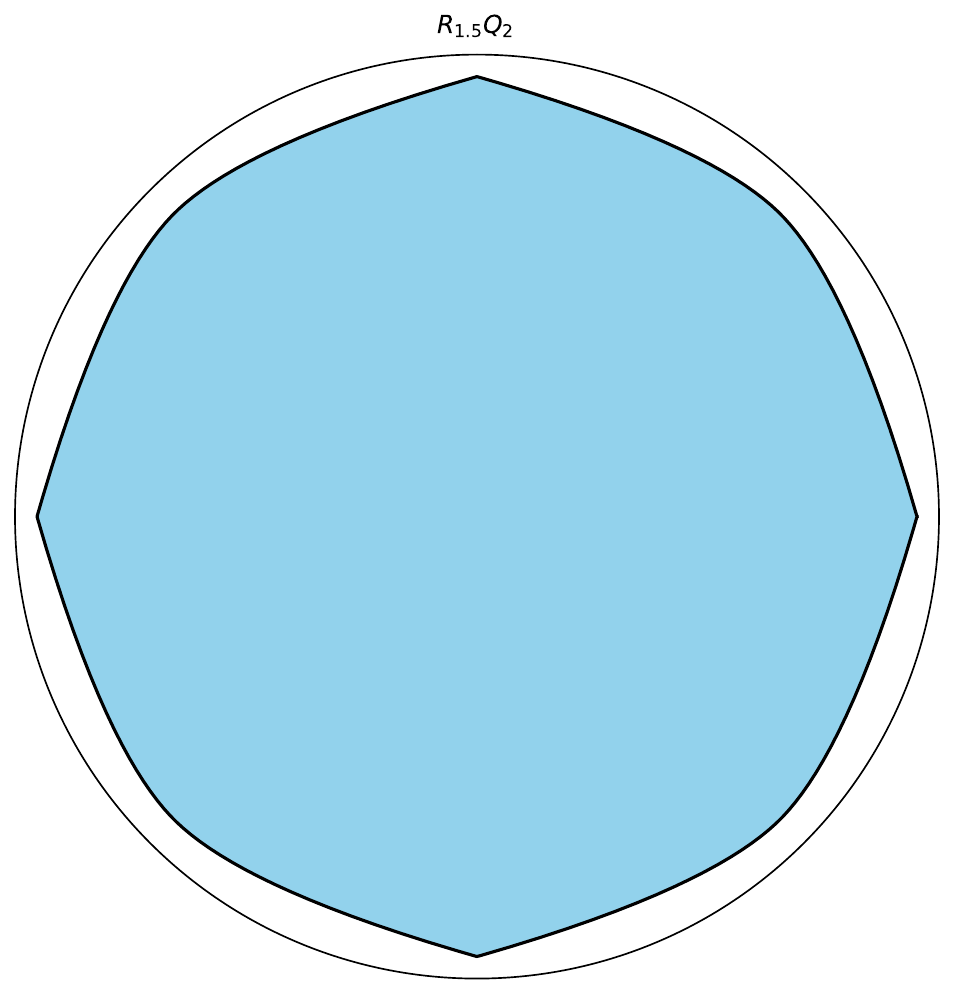}
    \label{fig:R1.5}
  \end{subfigure}
\caption{The body $R_p Q_2$ when $p=0.5$ and $p=1.5$.}
  \label{fig:R_p}
\end{figure}

\begin{prop}
    Fix $p>-1$. Then, for every $\theta\in\s^{n-1}$,
    \begin{equation}
\label{eq:radial_body_cube}
    \rho_{R_p Q_n}(\theta) \!=\! \begin{cases}
        \left(p\int_{0}^{\|\theta\|_{\infty}^{-1}} \prod_{i=1}^n(1-r|\theta_i|)r^{p-1}dr\right)^\frac{1}{p}, & p>0,
        \\
        \exp\left(\int_{0}^{\|\theta\|_{\infty}^{-1}}\!\!\! \left(\!\!\sum_{i=1}^n|\theta_i|\prod_{\substack{j=1 \\ j\neq i}}^n(1-r|\theta_j|)\right)\!\!\log(r)dr\right), & p=0,
        \\
        \left((-p)\int_{0}^{+\infty} \left(1-\prod_{i=1}^n(1-r|\theta_i|)_+\right)r^{p-1}dr\right)^\frac{1}{p}, & \!\!\!\!\!p \in (-1,0).
    \end{cases}
\end{equation}
In particular,
\begin{equation}
\label{eq:radial_cube_canonical}
    \rho_{R_p Q_n}(\pm e_i) =
    \begin{cases}
        \left(\frac{1}{p+1}\right)^\frac{1}{p}, & p>-1, p\neq 0,
        \\
       \frac{1}{e}, & p=0
    \end{cases}
\end{equation}
Finally, for every $\varepsilon\in \{-1,1\}^n:$
\begin{equation}
\label{eq:radial_cube_ham}
    \rho_{R_p Q_n}\left(\frac{\varepsilon}{\sqrt{n}}\right) =\sqrt{n}\rho_{R_p Q_n}\left(\varepsilon\right)=\begin{cases}
    \sqrt{n}\binom{n+p}{p}^{-\frac{1}{p}}, & p>-1, p\neq 0,
    \\
    \sqrt{n}e^{-H_n}, & p=0.
    \end{cases}
\end{equation}
\end{prop}
\begin{proof}
     We take advantage of the product structure to iteratively factor the covariogram of $g_{Q_n}$ using \eqref{eq:covario_line} and \eqref{eq:factor_g_K}: $g_{Q_n}((x_1,\dots,x_n)) = \prod_{i=1}^n(1-|x_i|)_+$.  We will also use that, if we write $\theta=(\theta_1,\dots,\theta_n)\in \s^{n-1}$, then, for almost every $r>0$,
\[
\frac{\partial}{\partial r}g_{K}(r\theta) = -\sum_{i=1}^n|\theta_i|\chi_{[0,|\theta_i|^{-1}]}(r)\prod_{\substack{j=1 \\ j\neq i}}^n(1-r|\theta_j|)_. 
\]
The claims follow from  \eqref{eq:radial_ell} and direct computation.
\end{proof}

We will show more strongly that there does not exist a position of $Q_n$ such that $R_p Q_n$ is isomorphic to a ball (with dimension-free radius) when $p\in [n-1,n)$. Recall that the Banach-Mazur distance between two origin-symmetric  bodies $K,M\subset \R^n$ is given by
\begin{equation}
    d_{\operatorname{BM}}(K,M) = \min \left\{d \ge 1 : T M\subseteq K\subseteq d TM, \quad \text{for some } T\in GL_n(\R)\right\}.
\end{equation}
The classical result by F. John \cite{FJ37} tell us that $d_{\operatorname{BM}}(K,B_2^n)\le \sqrt{n}$ for any origin-symmetric convex body $K$, thus $d_{\operatorname{BM}}(R_pQ_n,\B) \leq \sqrt{n},$ for $p\ge 0$. Next we will provide a lower bound.
\begin{prop}
    Fix $p>-1$. Then, $\sqrt{n} \left(\frac{p+1}{\binom{n+p}{p}}\right)^\frac{1}{p}\leq d_{\operatorname{BM}}(R_pQ_n,\B)$. In particular, for $p\in [n-1,n)$, $ d_{\operatorname{BM}}(R_pQ_n,\B) \approx \sqrt{n}$ as $n\to \infty$.
\end{prop}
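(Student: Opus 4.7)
The plan is to exploit the large symmetry group of $R_p Q_n$ to reduce the Banach--Mazur optimization over $\operatorname{GL}_n(\R)$ to a comparison of concentric Euclidean balls, at which point the claim reduces to an explicit ratio of radial values in two convenient directions.

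\textbf{Step 1 (Symmetry).} The covariogram $g_{Q_n}(x) = \prod_{i=1}^n (1-|x_i|)_+$ is invariant under the hyperoctahedral group $B_n := S_n \ltimes \{\pm 1\}^n$ of signed permutations, viewed as a subgroup of $O(n)$; by \eqref{eq:radial_ell}, the convex body $R_p Q_n$ inherits this invariance. The standard action of $B_n$ on $\R^n$ is irreducible over $\R$: any nonzero invariant subspace $V$ contains some $v$ with $v_i \ne 0$, and subtracting the image of $v$ under the sign change on the $i$th coordinate yields a nonzero multiple of $e_i \in V$; permutations then give all coordinate vectors, forcing $V = \R^n$.

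\textbf{Step 2 (Symmetry-reduction lemma).} For any $B_n$-symmetric origin-symmetric convex body $K$, I claim
\[
d_{\operatorname{BM}}(K, \B) \;\geq\; \frac{\max_\theta \rho_K(\theta)}{\min_\theta \rho_K(\theta)}.
\]
By uniqueness of John's maximum-volume inscribed ellipsoid together with Schur's lemma applied to its $B_n$-invariant defining quadratic form (a real-irreducible representation admits only scalar invariant positive-definite symmetric forms), this ellipsoid is the inscribed ball $r\B$ with $r = \min_\theta \rho_K(\theta)$; dually, the minimum-volume enclosing ellipsoid is $R\B$ with $R = \max_\theta \rho_K(\theta)$. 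If $T \in \operatorname{GL}_n(\R)$ satisfies $T\B \subseteq K \subseteq d\,T\B$, then $T\B$ is an inscribed ellipsoid and $d\,T\B$ an enclosing one, so volume comparison with the John ellipsoids yields $\det T \leq r^n$ and $d^n \det T \geq R^n$, whence $d \geq R/r$.

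\textbf{Step 3 (Explicit ratio and asymptotics).} Substituting the directional values $\rho_{R_p Q_n}(e_i) = (p+1)^{-1/p}$ from \eqref{eq:radial_cube_caonical} and $\rho_{R_p Q_n}(\varepsilon/\sqrt n) = \sqrt n \binom{n+p}{p}^{-1/p}$ from \eqref{eq:radial_cube_ham}, one computes
\[
\frac{\rho_{R_p Q_n}(\varepsilon/\sqrt n)}{\rho_{R_p Q_n}(e_i)} \;=\; \sqrt n \left(\frac{p+1}{\binom{n+p}{p}}\right)^{1/p}.
\]
When this ratio is $\geq 1$ it is majorized by $\max \rho_{R_p Q_n}/\min \rho_{R_p Q_n} \leq d_{\operatorname{BM}}(R_p Q_n,\B)$, and when it is $\leq 1$ the conclusion follows trivially from $d_{\operatorname{BM}} \geq 1$; the case $p=0$ is handled identically with the limiting values $1/e$ and $\sqrt n\, e^{-H_n}$. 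For the concluding asymptotic, Stirling's formula gives $\binom{n+p}{p}^{1/p} = \Theta(1)$ uniformly for $p \in [n-1, n)$ (since $\binom{2n}{n}^{1/n} \to 4$) while $(p+1)^{1/p} \to 1$, so the lower bound is $\Theta(\sqrt n)$; combined with F.~John's upper bound $d_{\operatorname{BM}}(K, \B) \leq \sqrt n$ for origin-symmetric convex $K$, this yields $d_{\operatorname{BM}}(R_p Q_n, \B) \asymp \sqrt n$ as $n \to \infty$.

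The main obstacle is the symmetry-reduction lemma of Step 2: showing that no nontrivial linear transformation can improve upon $T = r\cdot I$ requires combining the uniqueness of John's ellipsoid with the real-irreducibility of the $B_n$-action, and then converting the ellipsoidal sandwich into the scalar ratio $R/r$ via the volume inequalities.
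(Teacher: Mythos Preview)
Your approach via John's ellipsoid and Schur's lemma is genuinely different from the paper's and more conceptual. The paper proceeds by taking an arbitrary ellipsoid $\mathcal{E}$ with $\mathcal{E} \subset R_p Q_n \subset d\mathcal{E}$, writing $\rho_{\mathcal{E}} = |T\cdot|^{-1}$, and then averaging the resulting inequalities $d^{-2}|T\varepsilon|^2 \leq \binom{n+p}{p}^{2/p} \leq |T\varepsilon|^2$ over $\varepsilon \in \{\pm 1\}^n$; the cross terms vanish and $|T\varepsilon|^2$ is replaced by the Hilbert--Schmidt quantity $\sum_{k,i}|t_{k,i}|^2$, which is then compared against the same sum obtained from the directions $\pm e_i$. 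This Khintchine-type averaging trick avoids John's theorem entirely. Your route instead pins down the extremal ellipsoid up front and reduces the problem to the scalar ratio $R/r$; it yields the sharper statement $d_{\operatorname{BM}}(K,\B) = \max\rho_K/\min\rho_K$ for any convex $B_n$-symmetric $K$, at the cost of invoking two nontrivial external results.

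There is, however, a gap in coverage. Your Step~2 lemma is stated for convex $K$, and convexity is genuinely needed: uniqueness of the maximum-volume inscribed ellipsoid can fail for non-convex star bodies even under full $B_n$-symmetry (e.g.\ the planar ``plus sign'' $([-3,3]\times[-1,1])\cup([-1,1]\times[-3,3])$ admits the inscribed ellipse $\{(x/3)^2+y^2\le 1\}$ of area $3\pi$, larger than its inscribed disk). But $R_p Q_n$ is only known to be convex for $p \geq 0$ (Theorem~\ref{t:gardner_zhang}); for $p \in (-1,0)$ and $n \geq 3$ this remains open. The paper's averaging argument, being purely an inequality on radial values, works uniformly for all $p > -1$ regardless of convexity. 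Your argument is complete for $p \ge 0$ --- in particular for the headline asymptotic range $p \in [n-1,n)$ --- but does not cover the full interval $(-1,0)$ as stated in the proposition.
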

\begin{proof}
Let $\mathcal{E}$ be an ellipsoid such that $\mathcal{E} \subset R_p Q_n \subset d\mathcal{E}$ for some $d\ge 1$. Consider $T=(t_{k,i})_{k,i}\in GL_n(\R)$ such that $\rho_{\mathcal{E}}(x) = |Tx|^{-1}$. Therefore, we have $|Tx|^{2} \geq  \rho_{R_p Q_n}(x)^{-2} \geq d^{-2}|Tx|^{2}$. Picking $x = \frac{\varepsilon}{\sqrt{n}}$, $\varepsilon\in \{-1,1\}^n,$ we deduce from \eqref{eq:radial_cube_ham}
\[
\frac{1}{d^2}\sum_{k=1}^n\left(\sum_{i=1}^n\frac{\varepsilon_i}{\sqrt{n}}t_{k,i}\right)^2\leq\frac{\binom{n+p}{p}^{\frac{2}{p}}}{n}\leq \sum_{k=1}^n\left(\sum_{i=1}^n\frac{\varepsilon_i}{\sqrt{n}}t_{k,i}\right)^2.
\]
Simplifying, we have
\[
\frac{1}{d^2}\sum_{k=1}^n\left(\sum_{i=1}^n\varepsilon_it_{k,i}\right)^2\leq \binom{n+p}{p}^{\frac{2}{p}}\leq \sum_{k=1}^n\left(\sum_{i=1}^n\varepsilon_it_{k,i}\right)^2.
\]
We next take the average of the above inequality in $\varepsilon \in \{-1,1\}^n$. When we open the squared term, items of the form $\varepsilon_i\varepsilon_j$, $j\neq i$, average to zero. Therefore:
\[
\frac{1}{d^2}\sum_{k=1}^n\sum_{i=1}^n|t_{k,i}|^2\leq \binom{n+p}{p}^{\frac{2}{p}}\leq \sum_{k=1}^n\sum_{i=1}^n|t_{k,i}|^2.
\]
We isolate the inequality:
\begin{equation}
\label{eq:lower_BM}
d^2 \geq \binom{n+p}{p}^{-\frac{2}{p}}\sum_{k=1}^n\sum_{i=1}^n|t_{k,i}|^2
\end{equation}

On the other hand, we can pick $x=\pm e_i$ and obtain from \eqref{eq:radial_cube_canonical}
$$\sum_{k=1}^n|t_{k,i}|^2 \geq  \left(p+1\right)^\frac{2}{p} \geq d^{-2}\sum_{k=1}^n|t_{k,i}|^2.$$
Inserting this bound into \eqref{eq:lower_BM}, we have $d_{\operatorname{BM}}(R_pQ_n,\B) \geq \sqrt{n} \left(\frac{p+1}{\binom{n+p}{p}}\right)^\frac{1}{p},$ as claimed.
When $p\in [n-1,n)$, this is asymptotically on the order of $\sqrt{n}$ by Stirling's approximation.
\end{proof}

\section{The Geometry of the Fourier Mean Bodies}

\subsection{Foundational Properties}
\label{sec:foundational}

We start by establishing that the operator $F_p$ is a translation invariant,  $\operatorname{GL}(n)$ contravariant mapping of degree $\frac{n-p}{p}$. This may come as a surprise; a result by M. Ludwig \cite[Theorem 2p]{ML05} asserts that the only $L^p$-\textit{valuation} with this property (for $p>1$) and which maps the set of convex polytopes containing the origin to the set of origin-symmetric convex bodies must be a multiple of $\Pi_p$, the so-called $L^p$ projection body $\Pi_p K$ from \cite{LZ97,LYZ00}. We do not know if $F_p$ is a $L^p$-valuation (or any variant thereof). It seems unlikely, as one can verify by direction computation (say, when $n=1$) that $\frac{g_K}{\vol_n(K)} + \frac{g_L}{\vol_n(L)} \neq \frac{g_{K\cap L}}{\vol_n(K\cap L)} + \frac{g_{K\cup L}}{\vol_n(K\cup L)}$ in general, and similarly for $\frac{|\widehat{\chi_K}|^2}{\vol_n(K)}$ by taking the Fourier transform. But this, while an interesting question, is beyond the point; we see from Theorem~\ref{t:cube_not} that it is precisely when $p>1$ that $F_p K$ is not necessarily convex. See also \cite{HC09} for related studies. 

\begin{prop}
\label{p:contravariance}
    Let $K\subset \R^n$ be a convex body and $p>0$. Then, $K\mapsto F_p K$ is translation invariant. Additionally, for every $T\in \operatorname{GL}(n)$,
    \begin{equation}
        F_pTK = |\operatorname{det}(T)|^\frac{1}{p}T^{-t}F_p K.
        \label{eq:gl_relate}
    \end{equation}
    In particular, by taking $T$ to be a multiple of the identity matrix, 
    \begin{equation}
\label{eq:homogeneous}
F_p (cK) = c^\frac{n-p}{p} F_p K, \quad c>0.
\end{equation}
\end{prop}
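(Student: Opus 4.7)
The plan is to unpack the definition \eqref{eq:fourier_body} of $F_p K$ and track how each ingredient transforms under affine maps. For translation invariance, I would use the standard formula $\widehat{\chi_{K+x_0}}(\xi) = e^{-i\langle x_0,\xi\rangle}\widehat{\chi_K}(\xi)$, whence $|\widehat{\chi_{K+x_0}}|^2 = |\widehat{\chi_K}|^2$, together with $\vol_n(K+x_0) = \vol_n(K)$, to conclude that the integrand and the normalizing constant in \eqref{eq:fourier_body} are both unchanged. Hence $F_p(K+x_0) = F_p K$.

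For the $\operatorname{GL}(n)$ contravariance, the key identity is
$$\widehat{\chi_{TK}}(\xi) = |\det T|\,\widehat{\chi_K}(T^t\xi),$$
obtained from a change of variable in the Fourier integral. Substituting this identity together with $\vol_n(TK) = |\det T|\,\vol_n(K)$ into \eqref{eq:fourier_body}, and then performing the one-variable substitution $s = r|T^t\theta|$ inside the radial integral, yields
$$\rho_{F_p TK}(\theta)^p = \frac{|\det T|}{|T^t\theta|^p}\cdot \frac{p}{\vol_n(K)}\int_0^{+\infty}\!\left|\widehat{\chi_K}\!\left(s\,\tfrac{T^t\theta}{|T^t\theta|}\right)\right|^2 s^{p-1}\,ds = \frac{|\det T|}{|T^t\theta|^p}\,\rho_{F_p K}\!\left(\tfrac{T^t\theta}{|T^t\theta|}\right)^p.$$
Applying the convention $\rho_{F_p K}(z) = |z|^{-1}\rho_{F_p K}(z/|z|)$ with $z = T^t\theta$ collapses the right-hand side to $\rho_{F_p TK}(\theta) = |\det T|^{1/p}\,\rho_{F_p K}(T^t\theta)$.

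To recognize this as the radial function of a transformed set, I would invoke the elementary identities $\rho_{SM}(\theta) = \rho_M(S^{-1}\theta)$ and $\rho_{cM}(\theta) = c\,\rho_M(\theta)$; with $S = T^{-t}$ and $c = |\det T|^{1/p}$ this is precisely $\rho_{|\det T|^{1/p}T^{-t}F_pK}(\theta)$, establishing \eqref{eq:gl_relate}. The homogeneity statement \eqref{eq:homogeneous} then follows by specializing $T = cI_n$: since $|\det(cI_n)|^{1/p} = c^{n/p}$ and $(cI_n)^{-t} = c^{-1}I_n$, the combined scalar factor is $c^{n/p-1} = c^{(n-p)/p}$.

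\textbf{Main obstacle.} There is essentially no obstacle; the result is a formal consequence of how Fourier transforms and radial functions interact with affine maps. The only place to stay disciplined is the bookkeeping of transposes and the radial variable: depending on whether one enters via $\widehat{f(T\,\cdot)}(\xi) = |\det T|^{-1}\hat f(T^{-t}\xi)$ or via the dual identity for $\widehat{\chi_{TK}}$, one can easily end up with $T^{-t}$ in place of $T^t$, or misplace a factor of $|T^t\theta|$ when converting between spherical and $\R^n\setminus\{o\}$ evaluations of $\rho_{F_pK}$. Verifying the scalar case $T = cI_n$ against the target exponent $(n-p)/p$ in \eqref{eq:homogeneous} serves as a useful sanity check for the final constants.
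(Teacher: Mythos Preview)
Your proposal is correct and follows essentially the same route as the paper: both establish translation invariance from $|\widehat{\chi_{K+x_0}}|=|\widehat{\chi_K}|$, derive $\widehat{\chi_{TK}}(\xi)=|\det T|\,\widehat{\chi_K}(T^t\xi)$ by change of variable, feed this into \eqref{eq:fourier_body} to obtain $\rho_{F_p TK}(\theta)=|\det T|^{1/p}\rho_{F_pK}(T^t\theta)$, and then read off the set identity via $\rho_{SM}(\theta)=\rho_M(S^{-1}\theta)$ and $\rho_{cM}=c\rho_M$. The only cosmetic difference is that you spell out the one-variable substitution $s=r|T^t\theta|$ and normalize to a unit vector, whereas the paper absorbs this step into the $(-1)$-homogeneous extension of $\rho_{F_pK}$ directly.
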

\begin{proof}
The translation invariance is due to the fact that $|\widehat{\chi_{K+x}}|=|\widehat{\chi_{K}}|$ for all $x\in\ \R^n$. 
    It suffices to prove \eqref{eq:gl_relate} at the level of radial functions.
    Observe that 
    \begin{align*}
        \widehat{\chi_{TK}}(y) &= \int_{\R^n} e^{-i\langle x,y\rangle} \chi_{TK}(x)dx = \int_{\R^n} e^{-i\langle T^{-1}x,T^ty\rangle} \chi_{K}(T^{-1}x)dx
        \\
        &= |\operatorname{det}(T)| \int_{\R^n} e^{-i\langle z,T^ty\rangle} \chi_{K}(z)dz = |\operatorname{det}(T)|\widehat{\chi_K}\left(T^ty\right).
    \end{align*}
    We deduce from this computation that 
    \[
    \frac{1}{\vol_n(TK)}|\widehat{\chi_{TK}}(r\theta)|^2 = |\operatorname{det}(T)|\frac{1}{\vol_n(K)}|\widehat{\chi_{K}}(rT^t\theta)|^2.
    \]
    Inserting this into the formula \eqref{eq:fourier_body} for the radial function of $F_p K$, we derive the identity
    \begin{equation}
        \rho_{F_p TK}(\theta) = |\operatorname{det}(T)|^\frac{1}{p}\rho_{F_p K}(T^t\theta), \quad \forall \; \theta\in\s^{n-1}.
        \label{eq:gl_radial_form}
    \end{equation}
    The following two facts follow from the definition of radial function: for a star-shaped set $M\subset \R^n,$ $c>0$ and $T\in \operatorname{GL}(n)$, one has $\rho_M(T\theta) = \rho_{T^{-1}M}(\theta)$ and $c\rho_M = \rho_{cM}.$ Therefore, \eqref{eq:gl_radial_form} implies, $\rho_{F_p TK} = \rho_{|\operatorname{det}(T)|^\frac{1}{p}T^{-t}F_p K},$ which, in turn, implies the claimed linearity. 
\end{proof}

An immediate consequence of Proposition~\ref{p:contravariance} is that, if $D$ is a linear image of $K$, and $F_p K$ is a $L^n$-star, then so too is $F_p D$. As we shall see in Theorem~\ref{t:compactness}, $\vol_n(F_p E)<\infty$ for every ellipsoid $E$ when $p \in (0,n+1)$. We choose to prove this directly as well.
\begin{prop}
\label{p:ball}
    Let $E\subset\R^n$ be an ellipsoid. Then, $F_p E$ is compact if and only if $p\in (0,n+1)$. In particular, for such $p$, if $E=T\B+x_0$ for some $T\in \operatorname{GL}_n(\R)$ and $x_0\in \R^n$, then $F_p E$ is the centered ellipsoid given by 
    \[
    F_p E =\left(
\frac{(2\pi)^p\omega_{2n-p}\,\omega_{n-p}^{\,2}}{\omega_n\,\omega_p\,\omega_{2(n-p)}}
\right)^\frac{1}{p} |\operatorname{det}(T)|^\frac{1}{p} T^{-t}\B.
    \]
\end{prop}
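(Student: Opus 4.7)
My plan is to reduce the statement to the case $E=\B$ via Proposition~\ref{p:contravariance}. Translation invariance immediately allows us to assume $x_0=o$, so $E=T\B$. By \eqref{eq:gl_relate}, $F_pT\B = |\det(T)|^{1/p}T^{-t}F_p\B$. Since $\B$ is rotationally invariant, $\rho_{F_p\B}$ is constant on $\s^{n-1}$, so $F_p\B$ is a centered Euclidean ball, and the formula for $E$ follows by pulling the radius constant through. Moreover, compactness of $F_pE$ is equivalent to finiteness of this radius, hence to compactness of $F_p\B$. Thus everything reduces to computing $\rho_{F_p\B}(\theta)$ and pinning down when the defining integral is finite.

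The next step is to evaluate $\widehat{\chi_\B}$ by the radial Fourier formula \eqref{eq:stein}, applied to $\phi=\chi_{[0,1]}$; using the Bessel identity \eqref{eq:fourier_identity} with $\mu=(n-2)/2$ and $\nu=0$, I get
\begin{equation}
\widehat{\chi_\B}(x) = (2\pi)^{n/2}|x|^{-n/2}J_{n/2}(|x|),
\end{equation}
so that $|\widehat{\chi_\B}(r\theta)|^2=(2\pi)^n r^{-n}J_{n/2}(r)^2$. Substituting this into Definition~\ref{def:F_pK} yields
\begin{equation}
\rho_{F_p\B}(\theta)^p = \frac{p(2\pi)^n}{\omega_n}\int_0^{+\infty} J_{n/2}(r)^2\, r^{p-n-1}\,dr.
\end{equation}

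Now I apply the Mellin transform identity \eqref{eq:Bessel_mellin_2} with $\mu=n/2$ and $\nu=p-n$. The stated range of validity, $-1/2<-\nu/2<\mu$, becomes $0<p<n+1$, which gives exactly the claimed compactness equivalence: the integral diverges precisely when $p\geq n+1$ (or $p\le 0$). For $p\in(0,n+1)$, \eqref{eq:Bessel_mellin_2} produces a closed-form ratio of Gamma functions, with arguments $1-(p-n)$, $1-(p-n)/2$, $p/2$, and $n+1-p/2$.

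The remaining step is bookkeeping: rewrite every Gamma factor using $\Gamma(1+q/2)=\pi^{q/2}/\omega_q$, together with $\Gamma(p/2)=(2/p)\Gamma(1+p/2)$ to absorb the extra factor of $p$ and convert $\Gamma(n-p+1)$ via its $\omega_{2(n-p)}$ representation. After collecting powers of $\pi$ and $2$ (the factor $(2\pi)^n/2^{n-p}\pi^{n-p}$ collapses to $(2\pi)^p$), one arrives at
\begin{equation}
\rho_{F_p\B}(\theta)^p = \frac{(2\pi)^p\,\omega_{2n-p}\,\omega_{n-p}^{\,2}}{\omega_n\,\omega_p\,\omega_{2(n-p)}},
\end{equation}
which finishes the proof. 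The only slightly delicate point is the algebraic simplification at the end, but it is mechanical; the conceptual content is entirely the identification of the defining integral with a classical Mellin transform of $J_{n/2}^2$, and reading the convergence range off \eqref{eq:Bessel_mellin_2}.
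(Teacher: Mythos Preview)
Your proof is correct and follows essentially the same route as the paper: reduce to $\B$ via Proposition~\ref{p:contravariance}, compute $\widehat{\chi_\B}$ through \eqref{eq:stein} and \eqref{eq:fourier_identity}, and evaluate the resulting Mellin integral of $J_{n/2}^2$ via \eqref{eq:Bessel_mellin_2}. The only minor point you omit is that the paper handles $n=1$ separately, since the Bessel identity \eqref{eq:fourier_identity} is stated only for $\mu>-\tfrac12$ and thus does not literally cover $\mu=(n-2)/2=-\tfrac12$; you should either note the easy extension or treat that case directly.
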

\begin{proof}
Assume that $n\geq 2$. Then, for every $O\in SO(n)$, we have $$F_p \B = F_p O\B=O^{-t}F_p \B,$$ establishing that $F_p \B$ is rotational invariant, i.e. either a centered Euclidean ball or all of $\R^n$. We could appeal to Theorem~\ref{t:compactness} to find that $F_p \B$ is a ball when $p\in (0,n+1)$, but we choose to give a self-contained proof that, while more technical, provides the radius of $F_p \B$. 

We will use the Bessel function $J_\mu$ from \eqref{eq:Bessel} and some of its formulas listed afterward. From \eqref{eq:stein} (with $\varphi=\chi_{[-1,1]}$ and $f=\chi_{\B}$) and \eqref{eq:fourier_identity} with ($\nu=0$ and $\mu=\frac{n-2}{2}$), it follows
\begin{equation*}
        \widehat{\chi_{\B}}(x) = (2\pi)^\frac{n}{2} |x|^{1-\frac{n}{2}} \int_0^1 J_{\frac{n-2}{2}}(|x|t)t^\frac{n}{2}dt = \left(\frac{2\pi}{|x|}\right)^\frac{n}{2}J_\frac{n}{2}(|x|).
    \end{equation*}
    Next, we obtain the formula for $\rho_{F_p \B}$ by direct substitution:
    \begin{equation}
    \label{eq:F_pB}
        \rho_{F_p \B}(\theta) = \left(\frac{p(2\pi)^n}{\omega_n}\int_0^{+\infty}J_\frac{n}{2}(r)^2r^{p-n-1}dr\right)^\frac{1}{p}, \quad \theta\in\s^{n-1},
    \end{equation}
    which is a constant.  By \eqref{eq:Bessel_mellin_2} with $\mu=\frac{n}{2}$ and $\nu=p-n$, \eqref{eq:F_pB} converges if and only if $p\in (0,n+1)$, and, furthermore,
    \[
    \int_0^{+\infty}J_\frac{n}{2}(r)^2r^{p-n-1}dr =  \frac{\Gamma\left(n-p+1\right)}{p2^{n-p}\Gamma\left(1+\frac{n-p}{2}\right)^2}\frac{\Gamma\left(1+\frac{p}{2}\right)}{\Gamma\left(n+1-\frac{p}{2}\right)}.
    \]
    Consequently, we have by \eqref{eq:F_pB} the formula $\rho_{F_p \B} = \left(
\frac{(2\pi)^p\omega_{2n-p}\,\omega_{n-p}^{\,2}}{\omega_n\,\omega_p\,\omega_{2(n-p)}}
\right)^\frac{1}{p}$, which yields \[
    F_p \B =\left(
\frac{(2\pi)^p\omega_{2n-p}\,\omega_{n-p}^{\,2}}{\omega_n\,\omega_p\,\omega_{2(n-p)}}
\right)^\frac{1}{p} \B.
    \]

    For $n=1$, it is easy to see that $|\widehat{\chi_{[-1,1]}}(t)|^2=4\left|\frac{\sin t}{t}\right|^2, t>0$. The function $p\mapsto 2p\int_0^{\infty}\left|\frac{\sin t}{t}\right|^2 t^{p-1}\,dt$ is finite, in fact real-analytic, precisely for $p\in(0,2)$. Also, for $p\in(1,2)$,
\[
2p\int_0^{\infty}\sin^2(t)\,t^{p-3}\,dt
= -\frac{2p}{p-2}\int_0^{\infty} t^{p-2}\sin(2t)\,dt
= p\,\frac{\Gamma(p-2)}{2^{p-2}}\cos\!\left(\frac{\pi p}{2}\right),
\]
holds by integration by parts yields and \eqref{eq:sine_identity} (with $a=2$). The right-hand side has a removable
singularity at $p=1$; moreover, by \eqref{eq:gamma_cosine_reflection} and \eqref{eq:duplication},
$
\frac{\Gamma(p-2)}{2^{p-2}}\cos\!\left(\frac{\pi p}{2}\right)
=\frac{1}{2-p}\,
\frac{\Gamma\!\left(\frac{p}{2}\right)\Gamma\!\left(\frac12\right)}
{\Gamma\!\left(\frac{3-p}{2}\right)}.
$
Since both sides are real-analytic on $(0,2)$ and agree on $(1,2)$, the identity extends to
all $p\in(0,2)$:
\[
2p\int_0^{\infty}\sin^2(t)\,t^{p-3}\,dt
=\frac{p}{2-p}\,
\frac{\Gamma\!\left(\frac{p}{2}\right)\Gamma\!\left(\frac12\right)}
{\Gamma\!\left(\frac{3-p}{2}\right)}, \qquad 0<p<2.
\]
    Therefore, $\rho_{F_p[-1,1]} = \left( \frac{p}{2-p}\,
\frac{\Gamma\!\left(\frac{p}{2}\right)\Gamma\!\left(\frac12\right)}
{\Gamma\!\left(\frac{3-p}{2}\right)}\right)^\frac{1}{p}$, and, thus $$F_p [-1,1]=\left(\frac{p}{2-p}\,
\frac{\Gamma\!\left(\frac{p}{2}\right)\Gamma\!\left(\frac12\right)}
{\Gamma\!\left(\frac{3-p}{2}\right)}\right)^\frac{1}{p}[-1,1],$$ which is, in fact, the claimed formula.

    We translate these results to those for ellipsoids by applying Proposition~\ref{p:contravariance}.
\end{proof}

We now establish Proposition~\ref{p:mono}.

\begin{proof}[Proof of Proposition~\ref{p:mono}]
For the first claim, we obtain from polar coordinates, \eqref{eq:covario_Fourier} and \eqref{eq:covario_zero}
    \[
    \vol_n(F_n K) = \frac{1}{\vol_n(K)}\int_{\R^n} |\widehat{\chi_K}(x)|^2dx = \frac{1}{\vol_n(K)}\int_{\R^n} \widehat{g_K}(x)dx = (2\pi)^n.
    \]
    Next, consider a bounded, non-negative, measurable function $f$ on $[0,\infty)$ and define
    $$M_p(f) = \frac{p}{\|f\|_{L^\infty(\R^n)}}\int_0^\infty f(r)r^{p-1}dr.$$
    Then, it was proved by V. Milman and A. Pajor \cite[Lemma 2.1]{MP89} that $p\mapsto (M_p(f))^\frac{1}{p}$ is increasing on $\{p>0:0<M_p(f)<+\infty\}$, and is constant if and only if $f$ is the characteristic function of an interval. 
    
    Consequently, by setting $f(r)=\frac{1}{\vol_n(K)}|\widehat{\chi_K}(r\theta)|^2$, and observing that $\|f\|_{L^\infty(\R^n)} = \vol_n(K),$ we obtain the pointwise inequality $$\rho_{F_p K} < \vol_n(K)^{\frac{1}{p}-\frac{1}{q}}\rho_{F_q K},$$ or, equivalently, $$F_p K \subset \vol_n(K)^{\frac{1}{p}-\frac{1}{q}}F_q K.$$
    The continuity in $p$ is obvious from the continuity to $p\mapsto t^p$ for $t>0$.   
    
    As a consequence, we deduce that $F_p K$ is a $L^n$-star for $0< p< n,$ since Proposition~\ref{p:mono} yields
    \[
    0 < \vol_n(F_p K) < \vol_n(K)^{\frac{n-p}{p}}\vol_n(F_nK) <+\infty.
    \]
    Observe that we have
    \begin{align*}
        \vol_n(K)^{1-\frac{n}{p}}\vol_n(F_p K) &\leq \vol_n(K)^{1-\frac{n}{p}}\vol_n\left(\vol_n(K)^{\frac{1}{p}-\frac{1}{q}}F_q K\right)
        \\
        & = \vol_n(K)^{1-\frac{n}{q}}\vol_n\left(F_q K\right).
    \end{align*}
     Finally, the affine-invariance of $K\mapsto \vol_n(K)^\frac{p-n}{p}\vol_n(F_p K)$ follows from Proposition~\ref{p:contravariance} and the fact that $0<\vol_n(F_pK) < +\infty$ for $p\in (0,n]$.
\end{proof}

Having properly demonstrated that $F_p$ is a well-defined operator from the set of convex bodies to the set of origin-symmetric $L^n$-stars when $p\in (0,n]$, we are almost ready to prove Theorem~\ref{t:fourier_small_p}. All that remains is to show that $\rho_{F_pK}^p$ is a positive distribution.
\begin{lem}
\label{l:pos_dist}
    For $p\in (0,n)$ and a $K\subset \R^n$ be a convex body, $\rho_{F_p K}^p$ is a positive distribution.
\end{lem}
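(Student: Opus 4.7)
The goal is to prove $\langle\widehat{\rho_{F_pK}^p},\phi\rangle\ge 0$ for every non-negative $\phi\in\mathcal{S}(\R^n)$, where $\rho_{F_pK}^p$ is extended to $\R^n\setminus\{o\}$ by $(-p)$-homogeneity. My plan is to unfold the pairing into a manifestly non-negative integral, with the only non-trivial step being a preliminary check that $\rho_{F_pK}^p$ defines a tempered distribution.

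First, I verify $\rho_{F_pK}^p\in L^1_{\mathrm{loc}}(\R^n)$. Since the extension is $(-p)$-homogeneous and $p<n$, this reduces via polar coordinates to $\int_{\s^{n-1}}\rho_{F_pK}^p(\theta)\,d\theta<\infty$. By Tonelli applied to~\eqref{eq:fourier_body},
\begin{equation*}
\int_{\s^{n-1}}\rho_{F_pK}^p(\theta)\,d\theta
=\frac{p}{\vol_n(K)}\int_0^\infty r^{p-1}\Big(\int_{\s^{n-1}}|\widehat{\chi_K}(r\theta)|^2\,d\theta\Big)dr.
\end{equation*}
The inner integral is continuous, equals $n\omega_n\vol_n(K)^2$ at $r=0$, and is $O(r^{-(n+1)})$ as $r\to\infty$ by the Brandolini-Hofmann-Iosevich estimate cited in the introduction, so the whole integral converges for $p\in(0,n+1)$, in particular for $p\in(0,n)$.

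Next, the substitution $r=u|y|$ in~\eqref{eq:fourier_body} together with~\eqref{eq:covario_Fourier} yields, for $y\in\R^n\setminus\{o\}$,
\begin{equation*}
\rho_{F_pK}^p(y)=\frac{p}{\vol_n(K)}\int_0^\infty u^{p-1}\widehat{g_K}(uy)\,du.
\end{equation*}
Since $F_pK$ is origin-symmetric, I may replace $\phi$ by its (still non-negative) even part. Because $\widehat{g_K}=|\widehat{\chi_K}|^2\ge 0$, every integrand that follows is non-negative, so Tonelli validates all interchanges. Applying Parseval~\eqref{eq:parseval_1}, substituting the representation above, then using the dilation rule~\eqref{eq:Fourier_factoring} and a second Parseval step to convert $\int\widehat{g_K}(uy)\hat\phi(y)\,dy$ into $(2\pi)^n\int g_K(x)\phi(ux)\,dx$ (here the evenness of $g_K$ and $\phi$ is used), I arrive at
\begin{equation*}
\langle\widehat{\rho_{F_pK}^p},\phi\rangle=\frac{p(2\pi)^n}{\vol_n(K)}\int_0^\infty u^{p-1}\int_{\R^n} g_K(x)\phi(ux)\,dx\,du\ge 0,
\end{equation*}
as $g_K,\phi\ge 0$. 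The main obstacle is the $L^1_{\mathrm{loc}}$ verification in Step 1, which rests on the averaged Fourier decay of $\chi_K$; beyond that, the computation is a routine Parseval manipulation between non-negative quantities.
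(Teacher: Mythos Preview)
You have misread the statement: in this paper, a \emph{positive distribution} is a non-negative function that is $L^1_{\mathrm{loc}}$ and satisfies the polynomial decay condition stated in Section~\ref{sec:fourier_facts}. It is \emph{not} the same as a positive-definite distribution. The lemma asks only for (a) $\rho_{F_pK}^p\ge 0$ (trivial), (b) $\rho_{F_pK}^p\in L^1_{\mathrm{loc}}(\R^n)$, and (c) $t^{-\beta}\rho_{F_pK}^p(t\theta)\to 0$ a.e.\ for some $\beta>0$ (immediate from $(-p)$-homogeneity once (b) gives $\rho_{F_pK}(\theta)<\infty$ a.e.). The paper proves (b) by quoting Proposition~\ref{p:mono}, and (c) exactly as just indicated. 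Your Step~1 gives an alternative, self-contained proof of (b) via the Brandolini--Hofmann--Iosevich averaged decay, which is perfectly fine and arguably more direct; together with the two trivial observations above, Step~1 alone already establishes the lemma.

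Everything after Step~1 is aimed at the wrong target: you are proving that $\rho_{F_pK}^p$ is positive-\emph{definite}, which is the content of Theorem~\ref{t:fourier_small_p}, not of Lemma~\ref{l:pos_dist}. Incidentally, your justification ``Tonelli validates all interchanges'' is not quite right there either: for $\phi\ge 0$ the transform $\hat\phi$ need not be non-negative, so the intermediate integrands are not signed and you must invoke Fubini with an absolute-integrability check. That check does go through (it reduces, via polar coordinates and $p<n$, precisely to the finiteness established in Step~1 combined with the rapid decay of $\hat\phi$), but it should be stated rather than hidden behind an appeal to Tonelli.
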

\begin{proof}
 Recall we must show that $\rho_{F_p K}^p\in L^1_{\operatorname{loc}}(\R^n)$ and that there exists a $\beta>0$ such that $\lim_{t\to +\infty}\rho_{F_p K}^p(t\theta)t^{-\beta}=0$ for almost every $\theta\in\s^{n-1}.$ 
 
 For the latter, we have by Proposition~\ref{p:mono} that $F_p K$ is a $L^n$-star; therefore, the set $S_p:=\{\theta\in\s^{n-1}:\rho_{F_pK}(\theta) <\infty\}$ has full spherical Lebesgue measure. Consequently, for every $\theta\in S_p$, we obtain from the $(-1)$-homogeneity of radial functions that $\rho_{F_p K}^p(t\theta)t^{-\beta}=\rho_{F_p K}^p(\theta)t^{-(p+\beta)}$ will converge to zero as $t \to \infty$ for any choice of $\beta>0$.

 For the integrability requirement, it suffices to show that $$\int_{rB_2^n}\rho_{F_p K}^p (z) dz <\infty, \qquad \forall \, r>0.$$ From polar coordinates, we deduce that this integral is
 \begin{align*}
     \int_{\s^{n-1}}\int_{0}^r\rho_{F_p K}^p (t\theta) t^{n-1}dtd\theta &= \int_{\s^{n-1}}\rho_{F_p K}^p (\theta)\left(\int_{0}^rt^{n-p-1}dt\right)d\theta
     \\
     &=\frac{r^{n-p}}{n-p}\int_{\s^{n-1}}\rho_{F_p K}^p (\theta) d\theta,
 \end{align*}
which is finite by Proposition~\ref{p:mono}. 
\end{proof}
\begin{proof}[Proof of Theorem~\ref{t:fourier_small_p}]
Let $\phi\in\mathcal{S}(\R^n)$ be a test function. Then, by Parseval's formula \eqref{eq:parseval_1}, the formula \eqref{eq:radial_ell} for the radial functions of $R_p K$, and Fubini's theorem, we deduce the equalities
\begin{align*}
        &\langle \widehat{\rho_{R_p K}^p},\phi \rangle = \langle \rho^p_{R_p K},\hat\phi \rangle = \int_{\R^n}\rho_{R_p K}^p(x)\hat{\phi}(x)dx
        \\
        & = \frac{p}{\vol_n(K)}\int_{\R^n}\left(\int_{0}^{+\infty}g_K(rx)r^{p-1}dr\right)\hat{\phi}(x)dx
        \\
        & = \frac{p}{\vol_n(K)}\int_{0}^{+\infty}\left(\int_{\R^n}g_K(rx)\hat{\phi}(x)dx\right)r^{p-1}dr.
\end{align*}
Next, we use Parseval's formula \eqref{eq:parseval_1}, and then the formula \eqref{eq:Fourier_factoring} concerning the Fourier transform of a dilate of a function to obtain
\begin{equation}
\begin{split}
        \langle \widehat{\rho_{R_p K}^p},\phi \rangle &= \frac{p}{\vol_n(K)}\int_{0}^{+\infty}\langle \widehat{g_K(rx)},\phi(x)\rangle r^{p-1}dr
        \\
        &= \frac{p}{\vol_n(K)}\int_{0}^{+\infty}\left\langle \widehat{g_K}\left(\frac{x}{r}\right),\phi(x)\right\rangle r^{p-n-1}dr
        \\
        &= \frac{p}{\vol_n(K)}\int_{0}^{+\infty}\left\langle \widehat{g_K}\left(rx\right),\phi(x)\right\rangle r^{n-p-1}dr
        \\
        &= \frac{p}{\vol_n(K)}\int_{0}^{+\infty}\left\langle |\widehat{\chi_K}\left(rx\right)|^2,\phi(x)\right\rangle r^{n-p-1}dr
        \\
        &= \frac{p}{n-p} \left\langle \rho_{F_{n-p}K}^{n-p},\phi\right\rangle,
    \end{split}
    \label{eq:RF_actions}
    \end{equation}
    where we used a change of variables $r\mapsto \frac{1}{r}$, the Fourier transform of $g_K$ \eqref{eq:covario_Fourier}, and, finally, Fubini's theorem in-conjunction with the definition of $F_{n-p}K$ from \eqref{eq:fourier_body}. We have thus established, in the sense of distributions, the identity
    \[
    \rho_{F_{n-p}K}^{n-p} = \frac{n-p}{p}\widehat{\rho_{R_p K}^p}.
    \]
   We deduce from Definition~\ref{d:p_intersection_fourier}, after replacing $p$ with $n-p$, that $F_{p} K$ is a $p$-intersection star (of a dilate of $R_{n-p} K$). By Lemma~\ref{l:pos_dist}, $\rho_{F_{n-p}K}^{n-p}$ is a positive distribution, and therefore we have shown that $\rho_{R_p K}^p$ is a positive-definite distribution. 

    On the other hand, from an application of Parseval's formula \eqref{eq:parseval_2}, we have for every even test function $\varphi\in\mathcal{S}(\R^n)$, it holds by \eqref{eq:RF_actions}
\begin{equation}
\label{eq:last_parseval}
    \langle \widehat{\rho_{F_p K}^p}, \phi \rangle= \langle \rho_{F_p K}^p,\hat\phi \rangle = \frac{p}{n-p} \langle \widehat{\rho_{R_{n-p} K}^{n-p}},\hat\phi \rangle=\frac{p(2\pi)^n}{n-p}\langle \rho_{R_{n-p} K}^{n-p},\phi \rangle,
\end{equation}
    which yields the claim. It sufficed to only check \eqref{eq:last_parseval} along even test functions since $\widehat{\rho_{F_p K}^p}$ and $\rho_{R_{n-p} K}^{n-p}$ are, themselves, even distributions. Like in the previous case, we have by Definition~\ref{d:p_intersection_fourier} that $R_p K$ is a $p$-intersection body (of a dilate of $F_{n-p} K$). Since $R_{n-p} K$ is a convex body, $\rho_{R_{n-p} K}^{n-p}$ is a positive distribution. Thus, we have shown that $\rho_{F_p K}^p$ is a positive-definite distribution. The formula \eqref{eq:R_K_I_p_false_start} and \eqref{eq:F_K_I_p} follow from Definition \ref{d:p_intersection_fourier}.  We complete the proof.
\end{proof}

With the connections between $R_p K$ and $F_p K$ established, we now focus our study on more qualitative characterizations of $F_p K$.

\begin{proof}[Proof of Theorem~\ref{t:compactness}]
On the one hand, we have that $\widehat{\chi_K}$ is bounded by $\vol_n(K)$; therefore, $|\widehat{\chi_K}(r\theta)|^2 r^{p-1}$ is dominated by $\vol_n(K)^2 r^{p-1}$, which is integrable on $[0,1]$ because $p > 0$. On the other hand, $p(K)$, the Fourier index of $K$, is the largest $p^\prime>0$ such that, for every $p<p^\prime$ and for every $\theta\in \s^{n-1}$,
$|\widehat{\chi_K}(r\theta)|^2 r^{p-1}$ is integrable on $[1,\infty)$. Therefore, we have shown that $0<\rho_{F_p K}<\infty$ on $\s^{n-1}$. In particular, $F_p K$ is bounded with non-empty interior.

Moreover, the preceding arguments show we have a uniform bound (in $\theta$) for the integrand $|\widehat{\chi_K}(r\theta)|^2 r^{p-1}$; since $\widehat{\chi_K}$ is a continuous function, the dominated convergence theorem yields that $\rho_{F_p K}$ is continuous on $\s^{n-1}$. It follows that the star-shaped set $F_p K$ is, in fact, a star body. Proposition~\ref{p:contravariance} yields the affine invariance of $p(K)$.

We now set out to show that $p(K)$ is positive and determine bounds for it. First,
define
\begin{equation}
\label{eq:Omega}
\Omega(K)
:= \sup\Big\{\alpha \ge 0 :
\sup_{\theta \in \s^{n-1}}\sup_{r \ge 1}
r^{\alpha}\big|\widehat{\chi_K}(r\theta)\big| < \infty
\Big\}.
\end{equation}
Then, $p(K)=2\Omega(K)$. Thus, our study turns to $\Omega(K)$.

It is classical that $|\widehat{\chi_K}(\xi)|\lesssim |\xi|^{-1}$ for every convex
body, and that this bound is sharp for the cube (see, e.g., \cite[Section 3.2.1]{BGT14});
hence $\Omega(K)\ge1$ for all $K$, and therefore $p(K)\ge2$. Next, it is classical \cite{HCS62,BR69_b,BHI03} (see also \cite[Chapter VIII, Theorem 1 in Section 3.1 and Section 5.7]{Stein93}) that if $K$ is $C^2_+$-smooth, then
$|\widehat{\chi_K}(r\theta)|\lesssim r^{-(n+1)/2}$, and, furthermore, this exponent is optimal; hence $\Omega(K)=(n+1)/2$ and
$p(K)=n+1$ for such bodies. 
\end{proof}

 \subsection{Isotropic estimates for Fourier mean bodies}
\label{sec:isotropic}
This section is dedicated to proving Theorem~\ref{t:use_slicing}. Along the way, we establish a self-contained geometric proof of the fact that  $F_p K$ is compact when $p\in (0,1]$, without having to invoke Theorem~\ref{t:compactness}. We will need the following formula, which is an exercise obtained from \eqref{eq:stein}. Let $0<p<n$ and consider the locally integrable function on $\R^n$  $|\cdot|^{p-n}$. Then, its Fourier transform is the distribution
\begin{equation}
    \label{eq:moments_fourier}
    \widehat{|\cdot|^{p-n}}=2^{p}\pi^\frac{n}{2}\frac{\Gamma\left(\frac{p}{2}\right)}{\Gamma\left(\frac{n-p}{2}\right)}|\cdot|^{-p}.
\end{equation}

We repeatedly use the case where $n=1$ and $p\in (0,1)$. The coefficients have an elegant representation in this case, with an application of the Gamma reflection formula (in the form of \eqref{eq:gamma_cosine_reflection}) and the Legendre duplication formula (in the form of \eqref{eq:duplication}):
\begin{equation}
    \label{eq:moments_fourier_1D}
    \widehat{|\cdot|^{p-1}}=\frac{2}{p}\cos\left(\frac{p\pi}{2}\right)\Gamma\left(p+1\right)|\cdot|^{-p}, \qquad p\in (0,1).
\end{equation}
We start our investigations with the following lemma.
\begin{lem}
\label{l:mollifer}
    Let $K\subset \R^n$ be a convex body. Then, there exists an explicit positive constant $\kappa(n,p)$ such that, for $0<p<\min\{n,p(K)\}$,
\[
\left\langle \widehat{g_K}, \frac{p}{n}|\cdot|^{p-n} \right\rangle= \kappa(n,p)\left\langle |\cdot|^{-p},g_K\right\rangle.
\]
Similarly, for $0<p<1$ and every $\theta\in \s^{n-1}$, it holds
\[
\left\langle\frac{\widehat{\mathcal{R}g_K(\theta;\cdot)}}{\vol_n(K)},\frac{p}{2}|\cdot|^{p-1}\right\rangle = \frac{\Gamma\left(p+1\right)\cos\left(\frac{\pi p}{2}\right)}{\vol_n(K)}\left\langle \mathcal{R}g_K(\theta;\cdot),|\cdot|^{-p} \right\rangle.
\]

\end{lem}
\begin{proof}
    We first notice that, in each instance, the left-hand side is a genuine inner-product, while the right-hand side is the action of the distribution $|\cdot|^{-p}$. Because of this, we cannot simply appeal to Parseval's formula \eqref{eq:parseval_1} to move the Fourier transform onto the kernels $|\cdot|^{p-n}$ and $|\cdot|^{p-1}$. To overcome this obstacle, we use that Schwartz functions are dense in $L^1(\R^n)$. 
    
    Let $f\in \mathcal{S}(\R^n)$ be non-negative and supported on $\B$ with $\|f\|_{L^1(\R^n)}=1$ (e.g., take $f$ proportional to $\exp\left(-\frac{1}{1-|\;\cdot\;|^2}\right)\chi_{\B}$). Define a sequence of \textit{Schwartz mollifiers} by $f_j(x)=j^nf(jx)$. Then, for all $j\in \N,$ $f_j\in \mathcal{S}(\R^n)$, $\supp(f_j)=\frac{1}{j}B_2^n$, and $\|f_j\|_{L^1(\R^n)}= 1$. 
    
    We now specialize our proof to the first identity. Define $\varphi_j=g_K\ast f_j$. Then, $\varphi_j \to g_K$ almost everywhere. Furthermore, $\hat{\varphi_j}=\hat{g_K}\hat{f_j}$ converges to $\hat{g_K}$ pointwise: for every $x\in \R^n,$
    \begin{equation}
    \label{eq:everywhere}
        |\widehat{\varphi_j}(x)-\widehat{g_K}(x)|=\left|\int_{\R^n}e^{-i\langle x,y\rangle}(\varphi_j(y)-g_K(y))dy\right| \leq \|\varphi_j-g_K\|_{L^1(\R^n)}.
    \end{equation}
    We use \eqref{eq:moments_fourier} to infer the existence of a constant $\kappa(n,p)$ such that $\frac{p}{n}\widehat{|\cdot|^{p-n}}=\kappa(n,p)|\cdot|^{-p}$ in the sense of distributions. We then have the formal computation, by \eqref{eq:parseval_1},
    \begin{equation}
    \label{eq:quer_fourier_final}
    \begin{split}
         \frac{p}{n} \left\langle |\cdot|^{p-n},\widehat{g_K} \right\rangle &=\frac{p}{n}\left\langle|\cdot|^{p-n},\lim_{j\to \infty}\widehat{\varphi_j}\right\rangle 
        =\frac{p}{n} \lim_{j\to\infty}\left\langle |\cdot|^{p-n},\widehat{\varphi_j} \right\rangle 
        \\
       & = \kappa(n,p) \lim_{j\to \infty} \left\langle |\cdot|^{-p},\varphi_j\right\rangle = \kappa(n,p)\left\langle |\cdot|^{-p},g_K\right\rangle.
        \end{split}
    \end{equation}
    
    We now justify the two instances of exchanging limits and integrals in \eqref{eq:quer_fourier_final}. For the first exchange, we have that, for every $x\in\R^n,$ 
    \begin{equation}
    \label{eq:fourier_uniform}
    \begin{split}
        \left|\widehat{\varphi_j}(x)\right|&= \left|\int_{\R^n}e^{-i\langle x,y\rangle}(g_K\ast f_j)(y)dy\right| 
        \\
        &\leq \int_{\R^n}(g_K \ast f_j)(y) dy = \|g_K\|_{L^1(\R^n)}\|f_j\|_{L^1(\R^n)}.
    \end{split}
    \end{equation}
    Recall that $p(K) (\geq 2)$ is defined so that, for every $\alpha\in (p,p(K))$ fixed, $\widehat{g_K} \leq c_K |\cdot|^{-\alpha}$ for some $c_K=c_K(\alpha)>0$. Therefore,
    \begin{equation}
    \label{eq:Fourier_decay}
    |\widehat{\varphi_j}(x)| = |\widehat{g_K}(x)|\cdot|\widehat{f_j}(x)| \leq c_K|x|^{-\alpha}\|f_j\|_{L^1(\R^n)} = c_K|x|^{-\alpha}, \quad x\in\R^n.
    \end{equation}
    We use \eqref{eq:fourier_uniform}, \eqref{eq:covario_zero}, and \eqref{eq:Fourier_decay} to deduce, 
    \begin{align*}
        |\left\langle |\cdot|^{p-n},\widehat{\varphi_j} \right\rangle| &\leq\int_{\R^n}|x|^{p-n}|\widehat{\varphi_j}(x)| dx 
        \\
        &= \int_{\B}|x|^{p-n}|\widehat{\varphi_j}(x)| dx + \int_{\R^n\setminus\B}|x|^{p-n}|\widehat{\varphi_j}(x)| dx 
        \\
        &\leq \vol_n(K)^2\int_{\B}|x|^{p-n}dx + c_K\int_{\R^n\setminus \B}|x|^{p-n-\alpha}dx,
    \end{align*}
    which is finite since $0<p<n$ and $p<\alpha$. Consequently, the first exchange is justified by the dominated convergence theorem. For the second exchange, we use that, since $g_K$ has compact support, and is in $L^1(\R^n)\cap L^\infty (\R^n)$, then $\varphi_j \in L^1(\R^n)\cap L^\infty (\R^n)$:
    \[
    |\varphi_j(x)| = \int_{\R^n}g_K(y)f_j(x-y)dy \leq \|g_K\|_{L^\infty(\R^n)}=\vol_n(K).
    \]
    Furthermore, $\supp(\varphi_j)=DK+\frac{1}{j}\B$, thus, $\supp(\varphi_j) \subset DK +\B$ for all $j.$ Consequently, the second exchange  again uses the dominated convergence theorem (by $\vol_n(K)|x|^{-p}\chi_{DK+\B}$).

    For the second identity, we first deduce from \eqref{eq:parallel_fubini} the identity
\begin{equation}
\label{eq:radon_g_K_convol}
\begin{split}
\widehat{\mathcal{R}g_K(\theta;\cdot)}(r)  &= \widehat{\chi_K}(r\theta)\widehat{\chi_K}(-r\theta)
= \widehat{A_{K,\theta}}(r)\widehat{A_{K,\theta}}(-r) 
\\
&= \left(\widehat{A_{K,\theta}(t)\ast A_{K,\theta}(-t)}\right)(r).
\end{split}
\end{equation}
In particular, \eqref{eq:radon_g_K_convol} shows that $\widehat{\mathcal{R}g_K(\theta;\cdot)}(r)$ decays at least as fast as $r^{-2}$. Recalling also that $\widehat{\mathcal{R}g_K(\theta;\cdot)}(r)=\hat{g_K}(r\theta)$ by \eqref{eq:og_radon_fourier}, the proof of the claim follows line-by-line the same as that of the first identity with $n=1$.
\end{proof}

Next, we derive a new formula for the radial function of $F_p K$ in terms of the parallel section function $A_{K,\theta}$ from \eqref{eq:parallel_section}.

\begin{prop}
\label{p:f_formulas}
    Let $p>0$ and $K\subset \R^n$ a convex body and fix $\theta\in\s^{n-1}$. Then, 
    \begin{equation}
        \rho_{F_p K}(\theta) =\frac{1}{\sqrt{2}}\left(\frac{p}{\vol_n(K)}\int_{0}^{+\infty} \widehat{A_{K\times (-K),\left(\frac{\theta}{\sqrt{2}},\frac{\theta}{\sqrt{2}}\right)}}(r)r^{p-1}dr\right)^\frac{1}{p},
        \label{eq:fourier_body_v2}
    \end{equation}
and
\begin{equation}
         \rho_{F_p K}(\theta)= \left(\frac{p}{\vol_n(K)}\int_{0}^{+\infty} \widehat{\left(A_{K,\theta}(t)\ast A_{K,\theta}(-t)\right)}(r)r^{p-1}dr\right)^\frac{1}{p}.
        \label{eq:using_og_radon_2}
    \end{equation}
Next, if $p\in (0,1)$, 
\begin{equation}
         \rho_{F_p K}(\theta)\!=\! \left(\frac{2\Gamma\left(p+1\right)\cos\left(\frac{\pi p}{2}\right)}{\vol_n(K)}\!\int_{0}^{+\infty} \!\!\!\!\!\!\left(A_{K,\theta}(t)\ast A_{K,\theta}(-t)\right)(r)r^{-p}dr\right)^\frac{1}{p}.
        \label{eq:using_og_radon_3}
    \end{equation}
    
 \noindent   In fact, 
\begin{equation}
\label{eq:parellel_convol}
A_{K\times (-K),\left(\frac{\theta}{\sqrt{2}},\frac{\theta}{\sqrt{2}}\right)}\left(\frac{r}{\sqrt{2}}\right)  =\sqrt{2}\left(A_{K,\theta}(t)\ast A_{K,\theta}(-t)\right)(r), \quad r\in \R.
\end{equation}
Therefore, for $p\in (0,1)$,
\begin{equation}
         \rho_{F_p K}(\theta)= \!\frac{1}{\sqrt{2}}\left(\frac{2\Gamma\left(p+1\right)\cos\left(\frac{\pi p}{2}\right)}{\vol_n(K)}\int_{0}^{+\infty}\!\!\!\!\!\!\! A_{K\times (-K),\left(\frac{\theta}{\sqrt{2}},\frac{\theta}{\sqrt{2}}\right)}\!\!\left(r\right)r^{-p}dr\!\!\right)^\frac{1}{p}.
        \label{eq:parellel_convol_2}
    \end{equation}
\end{prop}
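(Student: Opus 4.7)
The plan is to start from the definition
$\rho_{F_pK}^{\,p}(\theta)=\frac{p}{\vol_n(K)}\int_0^{+\infty}|\widehat{\chi_K}(r\theta)|^2\,r^{p-1}dr$
and rewrite the integrand $|\widehat{\chi_K}(r\theta)|^2$ in two complementary ways: as a $1$-dimensional Fourier transform of a convolution of parallel sections of $K$ (yielding \eqref{eq:using_og_radon_2}), and as a $1$-dimensional Fourier transform of a single parallel section of the product body $K\times(-K)\subset\R^{2n}$ in the diagonal direction $\xi=(\theta/\sqrt{2},\theta/\sqrt{2})$ (yielding \eqref{eq:fourier_body_v2}). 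The formula \eqref{eq:using_og_radon_3} is then extracted by transferring the Fourier transform onto the weight $r^{p-1}$ via $1$-dimensional Parseval, which is possible precisely in the range $p\in(0,1)$.

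For \eqref{eq:using_og_radon_2}, I would factor $|\widehat{\chi_K}(r\theta)|^2=\widehat{\chi_K}(r\theta)\widehat{\chi_{-K}}(r\theta)$ and apply \eqref{eq:parallel_fubini} twice to identify each factor with a $1$D Fourier transform of a parallel section: $\widehat{\chi_K}(r\theta)=\widehat{A_{K,\theta}}(r)$ and $\widehat{\chi_{-K}}(r\theta)=\widehat{A_{-K,\theta}}(r)=\widehat{A_{K,\theta}(-\,\cdot\,)}(r)$, using $A_{-K,\theta}(t)=A_{K,\theta}(-t)$; the convolution theorem then gives $|\widehat{\chi_K}(r\theta)|^2=\widehat{\bigl(A_{K,\theta}\ast A_{K,\theta}(-\,\cdot\,)\bigr)}(r)$, from which \eqref{eq:using_og_radon_2} is immediate. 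For \eqref{eq:fourier_body_v2}, the key observation is that $\chi_{K\times(-K)}(x,y)=\chi_K(x)\chi_{-K}(y)$, so its Fourier transform factorizes: $\widehat{\chi_{K\times(-K)}}(y_1,y_2)=\widehat{\chi_K}(y_1)\widehat{\chi_{-K}}(y_2)$. Evaluating at $(y_1,y_2)=r\xi=(r\theta/\sqrt{2},r\theta/\sqrt{2})$ gives $\widehat{\chi_{K\times(-K)}}(r\xi)=|\widehat{\chi_K}(r\theta/\sqrt{2})|^2$, and \eqref{eq:og_radon_fourier} applied in $\R^{2n}$ to $f=\chi_{K\times(-K)}$ identifies the left-hand side with $\widehat{A_{K\times(-K),\xi}}(r)$. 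Substituting into the defining integral and performing the change of variables $s=r\sqrt{2}$ generates the prefactor $1/\sqrt{2}$ and yields \eqref{eq:fourier_body_v2}. Identity \eqref{eq:parellel_convol} is then verified by comparing $1$D Fourier transforms of both sides: the left-hand side has Fourier transform $\sqrt{2}\,\widehat{A_{K\times(-K),\xi}}(u\sqrt{2})=\sqrt{2}\,|\widehat{\chi_K}(u\theta)|^2$, while the right-hand side has Fourier transform $\sqrt{2}\,\widehat{A_{K,\theta}}(u)\widehat{A_{K,\theta}(-\,\cdot\,)}(u)=\sqrt{2}\,|\widehat{\chi_K}(u\theta)|^2$, so the two are equal by Fourier inversion. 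Alternatively, one can derive \eqref{eq:parellel_convol} directly via Fubini applied to the integral defining $A_{K\times(-K),\xi}$ against a $1$D test function, slicing along the levels of $\langle x,\theta\rangle$ and $\langle y,\theta\rangle$ and using the factorization of $\chi_{K\times(-K)}$.

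To obtain \eqref{eq:using_og_radon_3}, I would set $\phi(t):=(A_{K,\theta}\ast A_{K,\theta}(-\,\cdot\,))(t)$ and note that $\phi$ is a continuous, compactly supported, non-negative even function on $\R$ (evenness follows from $\widehat{\phi}=|\widehat{A_{K,\theta}}|^2$ being real and even). Consequently, $\int_0^{+\infty}\widehat{\phi}(r)r^{p-1}dr=\tfrac12\int_{\R}\widehat{\phi}(r)|r|^{p-1}dr$, and one applies $1$D Parseval together with the distributional identity \eqref{eq:moments_fourier_1D}, which is valid exactly for $p\in(0,1)$:
\begin{equation*}
\int_{\R}\widehat{\phi}(r)|r|^{p-1}dr=\int_{\R}\phi(t)\,\widehat{|\,\cdot\,|^{p-1}}(t)\,dt=\frac{2\,\Gamma(p+1)\cos\!\left(\tfrac{\pi p}{2}\right)}{p}\int_{\R}\phi(t)|t|^{-p}dt.
\end{equation*}
Folding to $[0,+\infty)$ by evenness of $\phi$ and substituting into \eqref{eq:using_og_radon_2} gives \eqref{eq:using_og_radon_3}, after which \eqref{eq:parellel_convol_2} follows by inserting \eqref{eq:parellel_convol}.

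The main technical obstacle is the Parseval step above, since neither $\phi$ nor $|r|^{p-1}$ is a Schwartz function. I would resolve this by a standard Gaussian regularization: insert $e^{-\varepsilon r^2}$ in the integrand, apply Parseval (or Fubini directly on the now absolutely convergent double integral), and pass to the limit $\varepsilon\to 0^+$ by dominated convergence. The limiting integrals are both absolutely convergent: $\widehat{A_{K,\theta}}(r)=\mathcal O(r^{-1})$ by the bounded variation of the concave function $A_{K,\theta}$ yields $\widehat{\phi}(r)=\mathcal O(r^{-2})$, so $\widehat{\phi}(r)r^{p-1}$ is integrable on $[0,+\infty)$; and $\phi\in L^\infty(\R)$ with compact support together with $p<1$ make $\phi(t)|t|^{-p}$ integrable. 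This completes the plan.
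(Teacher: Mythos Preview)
Your proposal is correct and follows essentially the same route as the paper: factor $|\widehat{\chi_K}(r\theta)|^2$ via \eqref{eq:parallel_fubini} and the product structure of $K\times(-K)$, derive \eqref{eq:parellel_convol} by matching one-dimensional Fourier transforms, and obtain \eqref{eq:using_og_radon_3} by moving the Fourier transform onto $|r|^{p-1}$ via \eqref{eq:moments_fourier_1D} with a regularization argument (you use a Gaussian cutoff, the paper uses mollifiers---both work). One small slip: $A_{K,\theta}$ is $(1/(n-1))$-concave rather than concave for $n\ge 3$, but it is still of bounded variation (being log-concave with compact support), so your $\mathcal{O}(r^{-1})$ decay claim for $\widehat{A_{K,\theta}}$ stands.
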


\begin{proof}
 Consider two convex bodies $K,D\subset\R^n$. Let $\xi_1,\xi_2 \in \s^{n-1}$ and define $\xi=\left(\frac{\xi_1}{\sqrt{2}},\frac{\xi_2}{\sqrt{2}}\right) \in \s^{2n-1}$. Then, from \eqref{eq:parallel_fubini}:
\begin{align*}
    \widehat{A_{K\times D,\xi}}(\sqrt{2}r) &= \int_{\R^n\times \R^n}e^{-i\left\langle  r(\xi_1,\xi_2),(x_1,x_2)\right\rangle}\chi_{K\times D}((x_1,x_2)) dx_1dx_2 
    \\
    &=\left(\int_{\R^n}e^{-i\left\langle  r\xi_1,x\right\rangle}\chi_{K}(x) dx\right)\left(\int_{\R^n}e^{-i\left\langle  r\xi_2,x\right\rangle}\chi_{D}(x) dx\right)
    \\
    &=\widehat{\chi_K}(r\xi_1)\widehat{\chi_D}(r\xi_2).
\end{align*}
In particular, by setting $D=-K$ and $\xi_1=\theta=\xi_2$, we obtain
\begin{equation}
    \widehat{A_{K\times (-K),\left(\frac{\theta}{\sqrt{2}},\frac{\theta}{\sqrt{2}}\right)}}(\sqrt{2}r) = \widehat{\chi_K}(r\theta)\widehat{\chi_{-K}}(r\theta)=|\widehat{\chi_K}(r\theta)|^2.
    \label{eq:parellel_double}
\end{equation}
The formula \eqref{eq:fourier_body_v2} follows. As for \eqref{eq:using_og_radon_2}, we start by writing the radial function of $F_p K$ as,
    \begin{equation}
        \rho_{F_p K}(\theta)=\left(\frac{p}{\vol_n(K)}\int_{0}^{+\infty}\widehat{g_K}(r\theta)r^{p-1}dr\right)^\frac{1}{p}, \quad \theta\in\s^{n-1}.
        \label{eq:step_to_distribution}
    \end{equation}
  Inserting \eqref{eq:radon_covariogram} into \eqref{eq:step_to_distribution} yields the identity
    \begin{equation}
         \rho_{F_p K}(\theta)=\left(\frac{p}{\vol_n(K)}\int_{0}^{+\infty}\widehat{\mathcal{R}g_K(\theta;\cdot)}(r)r^{p-1}dr\right)^\frac{1}{p}.
        \label{eq:using_og_radon}
    \end{equation}
The formulas \eqref{eq:using_og_radon} and  \eqref{eq:radon_g_K_convol} yield the claim. We next write \eqref{eq:using_og_radon} as the action of a distribution:
\begin{equation}
    \rho_{F_p K}(\theta) =\left(\left\langle\frac{\widehat{\mathcal{R}g_K(\theta;\cdot)}}{\vol_n(K)},\frac{p}{2}|\cdot|^{p-1}\right\rangle\right)^\frac{1}{p}.
    \label{eq:using_og_radon_again}
\end{equation}
An application of Lemma~\ref{l:mollifer} then yields
 \begin{equation}
 \begin{split}
        \rho_{F_p K}(\theta)=\left(\frac{\Gamma\left(p+1\right)\cos\left(\frac{\pi p}{2}\right)}{\vol_n(K)}\left\langle \mathcal{R}g_K(\theta;\cdot),|\cdot|^{-p} \right\rangle\right)^\frac{1}{p}.
    \end{split}
        \label{eq:step_to_distribution_2}
    \end{equation}
     By applying Fourier inversion to \eqref{eq:radon_g_K_convol}, we deduce
\begin{equation}
\label{eq:radon_g_K_convol_2}
\mathcal{R}g_K(\theta;r)   = \left(A_{K,\theta}(t)\ast A_{K,\theta}(-t)\right)(r).
\end{equation}
We then obtain \eqref{eq:using_og_radon_3} from \eqref{eq:radon_g_K_convol_2} and \eqref{eq:step_to_distribution_2}. To deduce \eqref{eq:parellel_convol}, we equate \eqref{eq:parellel_double} and \eqref{eq:radon_g_K_convol} to obtain
\begin{align*}
\left(\widehat{A_{K,\theta}(t)\ast A_{K,\theta}(-t)}\right)(r) &=  \widehat{A_{K\times (-K),\left(\frac{\theta}{\sqrt{2}},\frac{\theta}{\sqrt{2}}\right)}}(\sqrt{2}r) 
\\
&= \frac{1}{\sqrt{2}}\widehat{A_{K\times (-K),\left(\frac{\theta}{\sqrt{2}},\frac{\theta}{\sqrt{2}}\right)}\left(\frac{t}{\sqrt{2}}\right)}(r)
\end{align*}
from \eqref{eq:Fourier_factoring}. We conclude by applying Fourier inversion.
\end{proof}

The following corollary of Proposition~\ref{p:f_formulas} establishes that $F_1 K$ is a section of a star body, and therefore compact. In particular, it already yields that, if $K$ is origin-symmetric, then $F_1 K$ is an origin-symmetric convex body. Indeed, if $K$ is origin-symmetric, then $K \times K$ is an origin-symmetric convex body in $\R^{2n}$. By Busemann's theorem, its intersection body is convex and therefore, its sections are convex.
\begin{cor}
\label{cor:F_1K}
    If $K\subset\R^n$ is a convex body, then $F_1 K$ is an $n$-dimensional  section of $\frac{1}{\sqrt{2}}\frac{\pi}{\vol_n(K)} I \left(K \times (-K)\right).$
    
\end{cor}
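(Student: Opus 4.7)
The plan is to set $p=1$ in equation \eqref{eq:fourier_body_v2} of Proposition~\ref{p:f_formulas} and then recover the value of a parallel section function at the origin from the integral of its Fourier transform. Writing $\iota(\theta) = (\theta/\sqrt{2},\theta/\sqrt{2})$, which is an isometric embedding of $\R^n$ into $\R^{2n}$ carrying $\s^{n-1}$ into $\s^{2n-1}$, this formula reads
\[
\rho_{F_1 K}(\theta) \;=\; \frac{1}{\sqrt{2}\,\vol_n(K)}\int_0^{+\infty}\widehat{A_{K\times(-K),\,\iota(\theta)}}(r)\,dr.
\]
The strategy is to recognize the right-hand integral as $\tfrac{\pi}{\sqrt{2}\,\vol_n(K)}\,A_{K\times(-K),\,\iota(\theta)}(0)$ by Fourier inversion on $\R$, and then invoke the very definition of the intersection body to identify $A_{K\times(-K),\,\iota(\theta)}(0)$ with $\rho_{I(K\times(-K))}(\iota(\theta))$.

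To justify the Fourier inversion, I would first establish that the scalar function $f(t) := A_{K\times(-K),\,\iota(\theta)}(t)$ is even. This is the delicate point, since $K\times(-K)$ is not origin-symmetric in $\R^{2n}$ unless $K=-K$. There are two natural ways forward: either observe directly that the involution $(x,y)\mapsto(-y,-x)$ preserves $K\times(-K)$ and sends $\langle(x,y),\iota(\theta)\rangle$ to its negative, so that $f(t)=f(-t)$; or, more efficiently, note from \eqref{eq:parellel_double} that after rescaling $\widehat f$ equals $|\widehat{\chi_K}(\,\cdot\,\theta/\sqrt{2})|^2$, which is manifestly non-negative and even, and then recover evenness of $f$ by Fourier inversion. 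The classical bound $|\widehat{\chi_K}(\xi)|\lesssim|\xi|^{-1}$ (invoked in the proof of Theorem~\ref{t:compactness}) makes $\widehat f$ integrable on $\R$, while continuity of $f$ (a parallel section function of a convex body) legitimizes pointwise inversion at $t=0$:
\[
f(0)\;=\;\frac{1}{2\pi}\int_{-\infty}^{+\infty}\widehat f(r)\,dr\;=\;\frac{1}{\pi}\int_0^{+\infty}\widehat f(r)\,dr.
\]

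Combining the two displays gives $\rho_{F_1 K}(\theta)=\tfrac{\pi}{\sqrt{2}\,\vol_n(K)}\,A_{K\times(-K),\,\iota(\theta)}(0)$, and the definition of the intersection body yields $A_{K\times(-K),\,\iota(\theta)}(0)=\vol_{2n-1}\!\bigl((K\times(-K))\cap\iota(\theta)^\perp\bigr)=\rho_{I(K\times(-K))}(\iota(\theta))$. Since $\iota$ is an isometric embedding onto the diagonal subspace $H:=\iota(\R^n)\subset\R^{2n}$, the map $\theta\mapsto\rho_{I(K\times(-K))}(\iota(\theta))$ is exactly the radial function of the $n$-dimensional section $I(K\times(-K))\cap H$, read back on $\R^n$ via $\iota^{-1}$. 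Scaling commutes with taking sections, so $F_1 K$ coincides with an $n$-dimensional section of the appropriate dilate of $I(K\times(-K))$, and compactness of $F_1 K$ then follows from compactness of intersection bodies of star bodies in $\R^{2n}$.

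The principal obstacle is confirming the evenness of the parallel section function across the non-symmetric product $K\times(-K)$; once this is in hand, the remainder of the proof is just one-dimensional Fourier inversion and unpacking of definitions.
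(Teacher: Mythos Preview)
Your approach is essentially identical to the paper's: set $p=1$ in \eqref{eq:fourier_body_v2}, apply one-dimensional Fourier inversion to recover $A_{K\times(-K),\iota(\theta)}(0)$, and identify this with the radial function of $I(K\times(-K))$ evaluated at $\iota(\theta)$. You are more careful than the paper in justifying the inversion step (evenness of $\widehat f$ via \eqref{eq:parellel_double}, integrability via the $|\xi|^{-1}$ decay, continuity of the parallel section function), all of which the paper elides with a bare reference to \eqref{eq:inversion}.

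One point worth flagging: your careful one-dimensional inversion yields the dilation factor $\dfrac{\pi}{\sqrt{2}\,\vol_n(K)}$, whereas both the corollary's statement and the paper's proof display $\dfrac{(2\pi)^n}{\sqrt{2}\,\vol_n(K)}$. Since the Fourier transform in \eqref{eq:fourier_body_v2} is taken on $\R$, the inversion constant is $2\pi$, not $(2\pi)^n$, and after halving (evenness) one gets $\pi$; your constant is the correct one, and the $(2\pi)^n$ in the paper appears to be a slip. This does not affect the qualitative conclusion that $F_1K$ is a section of a dilate of $I(K\times(-K))$.
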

\begin{proof}
    By setting $p=1$ in \eqref{eq:fourier_body_v2} and using the formula \eqref{eq:inversion},
    \begin{align*}
     \rho_{F_1 K}(\theta) &= \frac{1}{\sqrt{2}}\frac{1}{\vol_n(K)}\int_{0}^{+\infty} \widehat{A_{K\times (-K),\left(\frac{\theta}{\sqrt{2}},\frac{\theta}{\sqrt{2}}\right)}}(r)dr
     \\
     &= \frac{1}{2\sqrt{2}}\frac{1}{\vol_n(K)}\int_{-\infty}^{+\infty} \widehat{A_{K\times (-K),\left(\frac{\theta}{\sqrt{2}},\frac{\theta}{\sqrt{2}}\right)}}(r)dr
    \\
    &=\frac{1}{\sqrt{2}}\frac{\pi}{\vol_n(K)}A_{K\times (-K),\left(\frac{\theta}{\sqrt{2}},\frac{\theta}{\sqrt{2}}\right)}(0)
    \\
    &=\frac{1}{\sqrt{2}}\frac{\pi}{\vol_n(K)}\vol_{2n-1}\left(\left(K\times (-K)\right) \cap \left(\frac{\theta}{\sqrt{2}},\frac{\theta}{\sqrt{2}}\right)^\perp\right).
\end{align*}
This settles the claim.
\end{proof}

An interesting generalization of Busemann's intersection theorem to the case of slabs was established by M. Anttila, K. Ball, and I. Perissinaki \cite[Lemma 3]{ABP03}: if $K$ is an origin-symmetric convex body, then, for every $t>0$ and $\theta\in\s^{n-1}$,
\begin{equation}
\begin{split}
\label{eq:ABP}
\theta\mapsto  \int_0^{\frac{1}{t}}A_{K,\theta}(s)ds &= \vol_n\left(\left\{x\in K: 0\leq \langle x,t\theta \rangle \leq 1 \right\}\right)
\\
&=\int_{\R^n}\chi_K(x)\chi_{[0,1]}(\langle x,t\theta\rangle)dx,
\end{split}
\end{equation}
is the radial function of an origin-symmetric convex body. In this spirit, consider the quantity: for $p\in (0,1)$, a convex body $D\subset \R^n$ and $\xi \in\s^{n-1}$,
\begin{equation}
\label{eq:false_radial}
\xi\mapsto\left(\int_0^{+\infty}\!A_{D,\xi}(r)r^{-p}dr\right)^\frac{1}{p} = \left(p\int_0^{+\infty}\!\!\left(\int_0^r A_{D,\xi}(s)ds\right)r^{-p-1}dr\right)^\frac{1}{p},
\end{equation}
where the equality is integration by parts. We remark that the criterion $p\in (0,1)$ is used critically to ensure the boundary term at $r=0$ vanishes. Letting $t=1/r$, we then obtain
\begin{equation}
\label{eq:ball_ball}
 \left(\int_0^{+\infty}A_{D,\xi}(r)r^{-p}\right)^\frac{1}{p} = \left(p\int_0^{+\infty}\left(\int_0^{1/t} A_{D,\xi}(s)ds\right)t^{p-1}dt\right)^\frac{1}{p}.
\end{equation}
Comparing \eqref{eq:ball_ball} and \eqref{eq:ABP}, it is natural to ask the following: 
Let $\varphi:\R^n\to\R$ be a log-concave function. Then, is
\begin{equation}
\iota(z)=\int_{\R^n}\varphi(u)\chi_{[0,1]}(\langle u,z \rangle) du, \quad z\in\R^n,
\label{eq:bad_function}
\end{equation} also a log-concave function? In light of Theorem~\ref{t:radial_ball}, such a result would yield that the function from \eqref{eq:false_radial} is the radial function of a convex body. Unfortunately, this is not true, even when $n=1$.

\begin{ce}
    Indeed, consider $\varphi(t)=e^{-t^2}$. Then, the associated \eqref{eq:bad_function} is $\iota(s)=
\int_0^{1/|s|}e^{-t^2}dt,$  but $\iota(2) < \sqrt{\iota(1)\iota(3)}$. For an example closer to our considerations, take $\varphi(t)=\chi_{[0,1]}(t)$. Then, the associated \eqref{eq:bad_function} is $\iota(s)=0$ for $s<0$, $\iota(s)=1$ for $s\in (0,1)$, and $\iota(s) = 1/s$ for $s>0$. Notice that $\iota(2)=\frac{1}{2} < \frac{1}{\sqrt{3}} = \sqrt{\iota(1)\iota(3)}$. 
\end{ce}

We may also ask about our specific structure from \eqref{eq:parellel_convol_2}, when $\xi=(\frac{\theta}{\sqrt{2}},\frac{\theta}{\sqrt{2}})$ and $D=K\times (-K)$. Then   
\begin{equation}
    \label{eq:slabs}
    \begin{split}
        &\int_0^\frac{1}{s} A_{K\times (-K),(\frac{\theta}{\sqrt{2}},\frac{\theta}{\sqrt{2}})}(t)dt 
        \\
        &=\vol_{2n}\left(\left\{(x_1,x_2) \in K\times (-K):0\leq \left\langle x_1+x_2, \frac{s\theta}{\sqrt{2}}\right\rangle \leq 1\right\}\right)
        \\
        &= \int_{\R^n}\int_{\R^n}\chi_K(x_1)\chi_{-K}(x_2)\chi_{[0,1]}\left(\left\langle x_1+x_2, \frac{s\theta}{\sqrt{2}}\right \rangle \right)dx_1dx_2
        \\
        &=\int_{\R^n}(\chi_K\ast\chi_{-K})(x)\chi_{[0,1]}\left(\left\langle x,\frac{s\theta}{\sqrt{2}}\right \rangle \right)dx
        \\
        &=\int_{\R^n}g_K(x)\chi_{[0,1]}\left(\left\langle x,\frac{s\theta}{\sqrt{2}}\right \rangle \right)dx,
    \end{split}
    \end{equation}
which is of the form \eqref{eq:bad_function}, with $\varphi=g_K$ (and $z=s\theta/\sqrt{2}$). We again claim that the associated $\iota$ is \textit{not} log-concave in general. 

\begin{ce}
    Set $\varphi=g_{[-\frac{1}{2},\frac{1}{2}]}$, where the latter is the covariogram of $[-\frac{1}{2},\frac{1}{2}]$ from \eqref{eq:covario_line}, we obtain the associated function $\iota$ from \eqref{eq:bad_function} is precisely
$$\iota(s)=\begin{cases}
    \frac{1}{|s|}-\frac{1}{2|s|^2}, & |s|\geq 1,
    \\
    \frac{1}{2}, & 0<|s|<1,
    \\
    1, & s=0.
\end{cases}$$
However, $\iota\left(\frac{1}{2}\right)=\frac{1}{2} < \frac{1}{\sqrt{2}}=\sqrt{\iota(0)\iota(1)}.$ It was shown by S. Bobkov \cite{LBobkov} that K. Ball's theorem, Theorem~\ref{t:radial_ball}, holds when $f$ is $s$-concave for $s\geq -\frac{1}{p+1}$. Assuming we want all $p\in (0,1)$, we ask if the function $\iota$ from \eqref{eq:bad_function} is $s=-\frac{1}{2}$-concave. Unfortunately, our counterexample with $\varphi=g_{[-\frac{1}{2},\frac{1}{2}]}$ still works in this case: $\iota\left(\frac{1}{2}\right)^{-\frac{1}{2}}=\sqrt{2} > \frac{\sqrt{2}}{2}=\frac{\iota(0)^{-\frac{1}{2}}+\iota(1)^{-\frac{1}{2}}}{2}$.
\end{ce}

Still, using the techniques of this section, we prove Theorem~\ref{t:use_slicing}. In fact, we will use the upgraded \eqref{eq:fradelizi_2} to obtain the following sharper result.
\begin{thm}
\label{t:slicing_2}
    Let $K\subset\R^n$ be an isotropic convex body. Then, for every $p\in (0,1]$, $$\sqrt{\frac{2}{3}}\cdot\Gamma\left(1+\frac{p}{2}\right)^{\frac{2}{p}}\cdot L_K^{-1}\leq\rho_{F_pK} \leq\sqrt{\frac{2}{3}}\cdot \left(\sqrt{6}\cdot\Gamma\left(1+\frac{p}{2}\right)^{2}\right)^{\frac{1}{p}}\cdot L_K^{-1}.$$
\end{thm}
To obtain Theorem~\ref{t:use_slicing}, simply set $c_1 = \min_{q\in [0,1]}\Gamma\left(1+\frac{q}{2}\right)^{2}$ and similarly for $c_2$ (using maximum) and $c_3=\sqrt{\frac{2}{3}}$. Before proving Theorem~\ref{t:slicing_2}, we prove the following lemma, which may be of independent interest. It is inspired by the approach of M. Anttila, K. Ball, and I. Perissinaki \cite[Lemma 3]{ABP03}.

\begin{lem}
\label{l:our_slicing_bound}
    Let $K\subset\R^n$ be an isotropic convex body. Define 
    \[
K^\prime := K\times (-K) \times [- \sqrt{3}L_K, \sqrt{3}L_K]\subset \R^{2n+1}.
\]
Then,
\[ 1\le\vol_{2n}(K^\prime \cap \nu^\perp) \le \sqrt{6}, \qquad \forall\; \nu\in \s^{2n}.
\]

Equality is achieved in the first inequality if and only if $\nu=\pm e_{2n+1}$ and $K$ is any isotropic body or if $\nu\in \{(\theta,0,0),(0,\theta,0):\theta\in \s^{n-1}\}$ and $K$ is an isotropic cylinder in the corresponding direction $\theta$. The second inequality is strict.
\end{lem}
\begin{proof}
We note that, since $K$ is in isotropic position, then so too is $K\times (-K)$ and $L_{K\times (-K)}=L_{K}$. Consider the interval $[-a,a]$ for $a>0$. Then,
\begin{equation}\label{isotrseg}
I_a:=\int_{-a}^ax^2 dx = \frac{2}{3}a^3.
\end{equation}
As advertised, we pick $a = \sqrt{3}L_K$, so that $2aL_K^2 = I_a = 2\sqrt{3}L_K^3$. From the fact that $K$ is in isotropic position and \eqref{isotrseg}, we obtain
    \begin{equation}
    \int_{K^\prime}\langle z,\nu \rangle^2 dz = 2\sqrt{3}L_K^3, \quad \nu\in \s^{2n},
    \label{eq:using_iso}
    \end{equation}
   and $\vol_{2n+1}(K^\prime)=2a=2\sqrt{3}L_K$. We set $K^{\prime\prime}=\left(2\sqrt{3}L_K\right)^{-\frac{1}{2n+1}}K^\prime$, so that $\vol_{2n+1}(K^{\prime\prime})=1$ and thus $K^{\prime\prime}$ is in isotropic position. Then, \eqref{eq:using_iso} becomes, for $\nu\in \s^{2n}$,
    \begin{equation}
    \begin{split}
    \int_{K^{\prime\prime}}\langle z,\nu \rangle^2 dz &= \int_{K^\prime}\left(2\sqrt{3}L_K\right)^{-\frac{2n+3}{2n+1}}\langle z,\nu \rangle^2 dz 
    \\
    &= 
    \left(2\sqrt{3}L_K\right)^{-\frac{2n+3}{2n+1}}\left(2\sqrt{3}L_K^3\right).
    \end{split}
    \label{eq:using_iso_2}
    \end{equation}
    Therefore, 
    \begin{equation}
    \label{eq:iso_relate}
    L_{K^{\prime\prime}}=\left(2\sqrt{3}\right)^{-\frac{1}{2n+1}}L_K^\frac{2n}{2n+1}.
    \end{equation} 
    
 We apply Fradelizi's sharp Hensley theorem \eqref{eq:fradelizi_2} to $K^{\prime\prime}$; replacing $L_{K^{\prime\prime}}$ with its formula \eqref{eq:iso_relate} in terms of $L_K$, we obtain, recalling the constants $c(2n+1,2)$ and $c(2)=1/\left(2\sqrt{3}\right)$ from \eqref{eq:fradelizi}, for $\nu\in\s^{2n}$,
    $$
       (2\sqrt{3}L_K)^{-\frac{2n}{2n+1}} \leq \vol_{2n}(K^{\prime\prime}\cap \nu^\perp) \le c(2n+1,2) \cdot (2\sqrt{3})^{\frac{1}{2n+1}}L_K^{-\frac{2n}{2n+1}}.
    $$ 
    Since $K^{\prime\prime}=\left(2\sqrt{3}L_K\right)^{-\frac{1}{2n+1}}K^\prime$, it follows from the homogeneity of the Lebesgue measure that $\vol_{2n}(K^\prime \cap \nu^\perp) = (2\sqrt{3}L_K)^{\frac{2n}{2n+1}} \vol_{2n}(K^{\prime\prime}\cap \nu^\perp)$. The factors of $L_K$ cancel, and we deduce
    \[ 1\le\vol_{2n}(K^\prime \cap \nu^\perp) \le \sqrt{3}(2n+1)\binom{2n+3}{2}^{-\frac{1}{2}}, \qquad \forall\; \nu\in \s^{2n}.
\]

    We take a moment to discuss the equality conditions of this inequality.

    There is equality in the lower bound if and only if $K^{\prime}$ is a cylinder in the direction $\nu$. Decompose $\nu \in \s^{2n}$ as $\nu = (x, y, t) \in \R^n \times \R^n \times \R$. Then, by its product structure, $K^\prime$ is a cylinder in the direction $\nu$ if and only if exactly one of $x, y,$ or $t$ is non-zero:
    \begin{enumerate}
        \item $\nu = \pm e_{2n+1}$. Any convex body in $\R^{2n+1}$ of the form $L\times [-a,a]$, where $L\subset \R^{2n}$ and $a>0$, is a cylinder in this direction. Thus, equality holds in this case for any isotropic body $K$.
        \item $\nu = (\theta, 0, 0)$ for some $\theta \in \s^{n-1}$. Equality holds if and only if $K$ is a cylinder in the direction $\theta$.
        \item $\nu = (0, \theta, 0)$ for some $\theta \in \s^{n-1}$. Equality holds if and only if $-K$ (and thus $K$) is a cylinder in the direction $\theta$.
    \end{enumerate}
    
    On the other hand, equality in the upper bound holds if and only if $K^{\prime}$ is a double-cone in the direction $\nu$. A double-cone is defined as the convex hull of a base and two apices. However, since $K^\prime$ is a Cartesian product of dimension $2n+1 \ge 3$, it inherently possesses flat facets (e.g., the face $K\times (-K) \times \{\sqrt{3}L_K\}$). Thus, $K^\prime$ can never be a double-cone, and the upper bound is strictly an inequality.

    Therefore, we lose nothing by taking the supremum of the upper-bound. Notice the function $$n\longmapsto(2n+1)\binom{2n+3}{2}^{-\frac{1}{2}} = \frac{2n+1}{\sqrt{2n^2 + 5n + 3}}$$
    is monotonically increasing to $\sqrt{2}$. Combining this bound with the inequality we have shown allows us to conclude.
\end{proof}

\begin{proof}[Proof of Theorem~\ref{t:slicing_2}]
We may assume that $p\in (0,1)$, and then $p=1$ follows by the continuity of $F_p K$ in $p$ from Lemma~\ref{l:F_p<1} below. Henceforth, fix $\theta\in\s^{n-1}$. We first see that using  \eqref{eq:parellel_convol_2} yields 
    \[
    \sqrt{2}\left(\!\Gamma\left(p+1\right)\cos\left(\frac{\pi p}{2}\right)\right)^{-\frac{1}{p}}\!\!\!\rho_{F_p K}(\theta)= \left(2\int_{0}^{+\infty} \!\!\!A_{K\times (-K),\left(\frac{\theta}{\sqrt{2}},\frac{\theta}{\sqrt{2}}\right)}\left(t\right)t^{-p}dt\right)^\frac{1}{p}.
        \]
   Similarly to  \eqref{eq:false_radial}, we perform integration by parts and obtain
    \begin{equation}
\label{eq:ball_iso}
 2\!\!\int_{0}^{+\infty}\!\!\!\!\!\!\!\!\!A_{K\times (-K),\left(\frac{\theta}{\sqrt{2}},\frac{\theta}{\sqrt{2}}\right)}\!\!\left(t\right)t^{-p}\!dt = \!p\!\!\int_0^{\infty}\!\!\!\!\int_{-t}^{t} \!\!\!\!A_{K\times (-K),\left(\frac{\theta}{\sqrt{2}},\frac{\theta}{\sqrt{2}}\right)}\!\!\left(s\right)ds\, t^{-1-p}dt.
\end{equation}
Here, we used the verifiable fact that the parallel section function of $K\times (-K)$ is still even in directions of the form $(\frac{\theta}{\sqrt{2}},\frac{\theta}{\sqrt{2}})$.

 Next, set $\xi_\theta=\left(\frac{\theta}{\sqrt{2}},\frac{\theta}{\sqrt{2}}\right) \in \mathbb{S}^{2n-1}$ and define 
$$
K_\theta(t) = \{x\in K\times (-K): |\langle x,\xi_\theta\rangle| \leq t\},
$$
so that
\[
\vol_{2n}(K_\theta(t))  =  \int_{-t}^{t} A_{K\times (-K),\left(\frac{\theta}{\sqrt{2}},\frac{\theta}{\sqrt{2}}\right)}\left(s\right)ds.
\]
We climb to $\R^{2n+1}$ be defining, for $t>0$, the map $u:\R_+\to \s^{2n}$ by $$u_\theta(t)\!=\!\left(\frac{a}{\sqrt{t^2+a^2}}\xi_\theta,\frac{t}{\sqrt{t^2+a^2}}\right).$$ Then,
\[
P_{e_{2n+1}^\perp} (K^\prime \cap u_\theta(t)^\perp) = K_\theta(t).
\]
Here, $e_{2n+1}^\perp$ is the embedding of $\R^{2n}$ into $\R^{2n+1}$, and $P_{e_{2n+1}^\perp}$ is the orthogonal projection operator onto this space. By Cauchy's area formula, we know that
\begin{equation}
\label{eq:ball_cauchy}
\begin{split}
\vol_{2n}(K_\theta(t)) &= \vol_{2n}(P_{e_{2n+1}^\perp} (K^\prime \cap u_\theta(t)^\perp))
\\
&=\vol_{2n}(K^\prime \cap u_\theta(t)^\perp)\frac{t}{\sqrt{t^2+a^2}}.
\end{split}
\end{equation}
We insert \eqref{eq:ball_cauchy} into \eqref{eq:ball_iso} and deduce, for all $\theta\in\s^{n-1}$, 
\begin{equation}
    \frac{\sqrt{2}\cdot \rho_{F_p K}(\theta)}{\left(\Gamma\left(p+1\right)\cos\left(\frac{\pi p}{2}\right)\right)^{\frac{1}{p}}}= \left(p\int_{0}^{+\infty} \vol_{2n}(K^\prime \cap u_\theta(t)^\perp)\frac{t^{-p}}{\sqrt{t^2+a^2}} dt\right)^\frac{1}{p}.
        \label{eq:parellel_convol_slicing_2}
    \end{equation}
    Therefore, applying the result of Lemma~\ref{l:our_slicing_bound} in \eqref{eq:parellel_convol_slicing_2}, we have the pointwise bound
    \begin{equation}
    \label{eq:almost_ellip}
    \begin{split}
        1 &\le \left( \left(\Gamma\left(p+1\right)\cos\left(\frac{\pi p}{2}\right)\right)\frac{p}{2^\frac{p}{2}}\int_{0}^\infty\frac{t^{-p}}{\sqrt{t^2+a^2}}dt \right)^{-\frac{1}{p}} \!\!\!\!\!\!\!\!\rho_{F_p K}
        \le 6^\frac{1}{2p}.
    \end{split}
    \end{equation}
    We perform the variable substitution  $t\to ta$ and integrate:
    \[
    p\int_{0}^\infty\frac{t^{-p}}{\sqrt{t^2+a^2}}dt
    =\frac{p}{a^p}\int_{0}^\infty\frac{t^{-p}}{\sqrt{t^2+1}}dt
    =\frac{1}{a^p}\frac{\Gamma\left(\frac{1}{2}\right)}{\cos\left(\frac{\pi p}{2}\right)} \frac{\Gamma\left(1+\frac{p}{2}\right)}{\Gamma\left(\frac{1+p}{2}\right)}.
    \]
    Consequently, substituting $a = \sqrt{3}L_K$ and using \eqref{eq:duplication}, we obtain
    \begin{align*}
      \cos\left(\frac{\pi p}{2}\right)\frac{p}{2^\frac{p}{2}}\int_{0}^\infty\frac{t^{-p}}{\sqrt{t^2+a^2}}dt & = \left(\frac{1}{\sqrt{3}L_K}\right)^p \frac{\Gamma\left(\frac{1}{2}\right)\Gamma\left(1+\frac{p}{2}\right)}{2^\frac{p}{2}\Gamma\left(\frac{1+p}{2}\right)}\\ &=\left(\sqrt{\frac{2}{3}}\frac{1}{L_K}\right)^p \frac{\Gamma\left(1+\frac{p}{2}\right)^2}{\Gamma\left(p+1\right)}.
    \end{align*}
    We conclude by combining this calculation with \eqref{eq:almost_ellip}.
\end{proof}

\subsection{On Convexity of Fourier Mean Bodies for \texorpdfstring{$p\in(0,1]$}{p in (0,1]}.}
\label{sec:convex}
This section is dedicated to establishing the convexity of $F_p K$ when $p\in (0,1]$, that is, Theorem~\ref{t:main_convexity}. We first provide a direct proof for when $p\in (0,1)$ and $K$ is origin-symmetric.
\begin{proof}[Proof of Theorem~\ref{t:main_convexity} in the origin-symmetric case when $p\in (0,1)$]
We know from Corollary~\ref{cor:F_1K} and Proposition~\ref{p:mono} that $F_p K$ is bounded for $p\in (0,1)$. Consequently, it suffices to show that $\rho_{F_p K}^{-1}$ is a convex function for $p\in (0,1)$. We use \eqref{eq:parellel_convol_2} and Fubini's theorem: for $\theta\in \s^{n-1}$

    \begin{align*}
        \frac{2^{p/2} \vol_n(K)}{\Gamma\left(p+1\right)\cos\left(\frac{\pi p}{2}\right)} \rho_{F_p K}^p(\theta)&= 2\int_{0}^{+\infty}\!\! \!\!\int_{\left\{(x_1,x_2)\in K\times (-K):\left\langle (x_1, x_2),\left(\frac{\theta}{\sqrt{2}},\frac{\theta}{\sqrt{2}}\right)\right\rangle =r\right\}}\!\!\!\!\!\!\!\!\!\!\!\!\!\!\!\!\!\!\!\!\!\!\!\!\!\!\!\!\!\!\!\!\!\!dx_1dx_2\;r^{-p}dr
        \\
        &=\int_{K\times (-K)} \left|\left\langle (x_1, x_2),\left(\frac{\theta}{\sqrt{2}},\frac{\theta}{\sqrt{2}}\right)\right\rangle\right|^{-p}dx_1dx_2
        \\
        &=\vol^2_n(K)\rho_{\Gamma_{-p}^\circ \left(K\times (-K)\right)}^p \left(\left(\frac{\theta}{\sqrt{2}},\frac{\theta}{\sqrt{2}}\right)\right).
    \end{align*}
    We now use the symmetry assumption on $K$. Then, $K\times (-K)$ is an origin-symmetric convex body in $\R^{2n}$; by Theorem~\ref{t:berck}, $\Gamma_{-p}^\circ \left(K\times (-K)\right)$ is also an origin-symmetric convex body. In particular, the function on $\R^n$ given by the $1$-homogeneous extension of $V(\theta)=\rho_{\Gamma_{-p}^\circ \left(K\times (-K)\right)}\left(\left(\frac{\theta}{\sqrt{2}},\frac{\theta}{\sqrt{2}}\right)\right)^{-1}$ is convex. Since the above computations show that $V$ and $\rho_{F_p K}^{-1}$ coincide up to positive constants, we deduce that the latter is convex and conclude.
\end{proof}

We break the proof of Theorem~\ref{t:main_convexity} for general $K$ into two lemmas. The first lemma establishes the convexity of $F_p K$ when $p\in (0,1)$.
\begin{lem}
\label{l:F_p<1}
Let $K\subset \R^n$ be a convex body and let $p\in (0,1)$. Then, 
\begin{equation}
       F_{p} K=\left(\Gamma(1+p)\cos\left(\frac{\pi p}{2}\right)\vol_n(K)\right)^{\frac{1}{p}}Z^\circ_{-p} K.
    \end{equation}
In particular, $F_p K$ is an origin-symmetric convex body and $p\mapsto F_p K$ is continuous in the Hausdorff metric on $(0,1)$.
\end{lem}

\begin{proof}
   We have by Theorem~\ref{t:fourier_small_p} that $\rho_{F_{p} K}^{p}=\frac{p}{n-p}\widehat{\rho_{R_{n-p} K}^{n-p}}$. But, we can compute $\widehat{\rho_{R_{n-p} K}^{n-p}}$ another way by using Proposition~\ref{p:Koldobsky_cosine}, with $q=-p$:
    \begin{align*}
    \widehat{\rho_{R_{n-p} K}^{n-p}} &= \frac{\pi}{2}\frac{1}{\Gamma\left(1-p\right)\sin\left(\frac{\pi p}{2}\right)}\mathcal{C}_{-p}\left(\rho_{R_{n-p} K}^{n-p}\right)
    \\
    &= \frac{\pi}{2}\frac{\vol_n(R_{n-p}K)(n-p)}{\Gamma\left(1-p\right)\sin\left(\frac{\pi p}{2}\right)}\rho_{\Gamma^\circ_{-p}(R_{n-p} K)}^p
    \\
    &=(n-p)\Gamma(p)\cos\left(\frac{\pi p}{2}\right)\vol_n(R_{n-p}K)\rho_{\Gamma^\circ_{-p}(R_{n-p} K)}^p,
    \end{align*}
    where we used \eqref{eq:Lq_intersection_bodies} and then the identity \eqref{eq:gamma_ident} in the last line. We deduce that, for $p\in  (0,1)$,
    \begin{equation}
    \label{eq:F_p_neg}
        F_{p} K = \left(\Gamma(p+1)\cos\left(\frac{\pi p}{2}\right)\vol_n(R_{n-p}K)\right)^\frac{1}{p}\Gamma^\circ_{-p}(R_{n-p} K).
    \end{equation}
    Finally, we use Theorem~\ref{t:berck} to obtain that this is an origin-symmetric convex body. The continuity follows from Proposition~\ref{p:Z_sets}.
\end{proof}
The Lemma~\ref{l:F_p<1} complements Theorem~\ref{t:p_n_+_1}. We now prove Lemma~\ref{l:F_1K_is_convex}, which, in particular, yields the $p=1$ case of Theorem~\ref{t:main_convexity}. It will follow by taking the limit in $p$.
\begin{proof}[Proof of Lemma~\ref{l:F_1K_is_convex}]
    Firstly, we rewrite the formula before \eqref{eq:F_p_neg} as
    \begin{equation}
    \label{eq:F_p_neg_again}
    \begin{split}
        & F_{p} K 
        \\
        &= \left(\frac{p\pi}{\Gamma\left(2-p\right)\sin\left(\frac{\pi p}{2}\right)}\right)^\frac{1}{p}\left(\frac{2}{(1-p)\vol_n(R_{n-p}K)}\right)^\frac{1}{-p}\Gamma^\circ_{-p}(R_{n-p} K).
     \end{split}
    \end{equation}
    By Lemma~\ref{l:F_p<1}, $p\mapsto F_p K$ is continuous in the Hausdorff metric on $(0,1)$. Therefore, we may send $p\to 1^-$ in \eqref{eq:F_p_neg_again}, and obtain from Proposition~\ref{p:LZ} with $q=-p$, that
    \begin{equation}
    \label{eq:F_1_limit}
    \begin{split}
    F_1K=\lim_{p\to 1^-} F_p K= \pi I(R_{n-1} K).
    \end{split}
    \end{equation}
    In particular, $F_{1} K $ is the intersection body of an origin-symmetric convex body, and, therefore, by Busemann's theorem, is an origin-symmetric convex body.
\end{proof}

\begin{rem}
 \label{r:no_equality}
 We saw from the proof of Theorem~\ref{t:opposite_chain_psis} that equality never occurs in Theorem~\ref{t:z_opposite_chain} and Corollary~\ref{c:F_set_inclusions}. We provide here an additional geometric reason. Firstly, from the argument of that theorem, there is equality for a single $p$ and $q$ if and only if there is equality for all $q$. By sending $q\to \infty$, the body $K$ must solve $Z_p^\circ K= \binom{2n+p}{p}^\frac{1}{p}\left(DK\right)^\circ$ for all $p>-1$. From an application of \eqref{eq:F_1_limit}, we deduce that we must have

    \begin{equation}
    \label{eq:equality_F_1}
     I\left(R_{n-1} K\right) =\frac{1}{\pi}F_1 K = n\vol_n(K) \left(DK\right)^\circ.
    \end{equation}
    By using that $R_{n-1} \B$ is a particular dilate of $\B$ from Proposition~\ref{p:R_pE}, one can verify that $\B$ does not solve \eqref{eq:equality_F_1}. Since \eqref{eq:equality_F_1} is invariant under linear transformations, we deduce that ellipsoids cannot solve \eqref{eq:equality_F_1}.
    
    Next, observe that, if $K$ is a polytope, then $(DK)^\circ$ is an origin-symmetric polytope. However, it follows from \cite{GZ99_4,CS99} that no origin-symmetric polytope is the intersection body of a star body when $n\geq 3$. 
\end{rem}

As mentioned previously, Lemma~\ref{l:F_p<1} and Theorem~\ref{t:z_opposite_chain} immediately yield Corollary~\ref{c:F_set_inclusions}. We are therefore in a position to prove Proposition~\ref{p:decreasing_sets}.
\begin{proof}[Proof of Proposition~\ref{p:decreasing_sets}]
By \eqref{eq:growing_sets} and Corollary~\ref{c:F_set_inclusions}, we have, for $0<p<q\leq 1,$
\[
\frac{\lambda(q)}{\lambda(p)}\vol_n(K)^{\frac{1}{p}-\frac{1}{q}} F_q K \subseteq F_p K \subset \vol_n(K)^{\frac{1}{p}-\frac{1}{q}} F_q K. 
\]
Consequently, to establish the set inclusions \eqref{eq:decreasing_sets}, it suffices to prove that $\frac{\lambda(q)}{\lambda(p)} \geq \frac{2}{\pi}$, or, equivalently, $\frac{\lambda(p)}{\lambda(q)}\le \frac{\pi}{2}$, for $0<p<q\le 1$. To show this, it is enough to prove that $\lambda(p)$ is decreasing; then
    $$
      \frac{\lambda(p)}{\lambda(q)}\le   \frac{\lim_{p \to 0^+}\lambda(p)}{\lambda(1)} \leq \frac{\pi}{2}.
    $$ 
    Using \eqref{eq:gamma_sine_reflection}, we have $$\lambda (p) = \left(\frac{2}{\pi p}\frac{\sin(\pi p /2) \Gamma (2n+1 -p)}{\Gamma (2n+1)} \right)^\frac{1}{p}.$$ Let 
\begin{align*}    
    f(p)&= \log \lambda (p)
    \\
    &=\frac{1}{p}\left(\log\Gamma (2n+1 -p)-\log \Gamma (2n+1)+\log\left( \frac{\sin \left(\frac{\pi p}{2}\right)}{\frac{\pi p}{2}}\right)\right).
    \end{align*} 
    Define the function $h(x)=\log \left(\frac{\sin x}{x}\right)-(x \cot (x)-1)$. Then,
    \begin{align*}
    p^2\cdot f'(p)=\int_0^p \psi(2 n+1-t) d t-p\cdot \psi(2 n+1-p) - h\left(\frac{\pi p}{2}\right).
    \end{align*}
   
    Using integration by parts and the fact that $$\psi' (z) =\sum_{k=0}^\infty\frac{1}{(z+k)^2} \leq\frac{1}{z-\frac{1}{2}} \qquad \text{ for }z >\frac{1}{2},$$ we obtain
    \begin{align*}
        &\left(\int_0^p \psi(2 n+1-t) d t\right)-p\cdot \psi(2 n+1-p) =\int_0^pt \psi'(2 n+1-t) d t 
        \\
        &\leq\int_0^p \frac{2t}{4n+1-2t} d t \leq \frac{2}{4n+1-2p}\int_0^p t d t=\frac{p^2}{4n+1-2p}.
    \end{align*}
    For the remaining term, we notice that  $h(x) \geq \frac{x^2}{6},$ for $ 0 < x \leq \pi/2$. Therefore, we have
    $$f'(p) \leq \frac{1}{p^2}\left(\frac{p^2}{4n + 1 - 2p} - \frac{\pi^2 p^2}{24}\right) = \frac{1}{4n + 1 - 2p} - \frac{\pi^2}{24},$$
    which is negative for all $n$. We conclude.
\end{proof}

 Many of the operators we discussed enjoy a monotonicity property. For example, for convex bodies $K$ and $M$ such that $K\subset M$, it holds that $IK \subset I M$. This type of property cannot hold for the operator $F_p$; indeed, $[-1,1]^n\subset \sqrt{n}\B,$ but $F_2([-1, 1]^n)$ is an unbounded set by Theorem~\ref{t:compactness} and $F_2(\sqrt{n}B_2^n)$ is a bounded Euclidean ball by Proposition~\ref{p:ball}.  Nevertheless, $F_p$ is a monotone operation, in a similar manner to $R_p K$ listed in \eqref{eq:monotonic_R_pK}, when $p \in (0,1]$.
 \begin{prop}[Monotonicity of Fourier Mean Bodies]
     Let $p\in (0,1]$. Then, if $K,M\subset \R^n$ are convex bodies such that $K\subset M$, it holds 
     \[
     F_p K \subset \left(\frac{\vol_n(M)}{\vol_n(K)}\right)^\frac{1}{p}F_p M.
     \]
 \end{prop}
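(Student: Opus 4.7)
The plan is to reduce everything to a pointwise inequality on radial functions and exploit the elementary monotonicity of the covariogram under set-inclusion. The desired inclusion is equivalent to showing, for every $\theta \in \s^{n-1}$,
\[
\vol_n(K)\,\rho_{F_p K}(\theta)^{p} \leq \vol_n(M)\,\rho_{F_p M}(\theta)^{p}.
\]
The key geometric observation is that $K\subset M$ implies $K\cap (K+x)\subset M\cap(M+x)$ for every $x\in\R^n$, hence $g_K(x)\leq g_M(x)$ pointwise.

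For $p\in(0,1)$, I would invoke Lemma~\ref{l:F_p<1} together with the formula \eqref{eq:new_radial_Zstar} (applied with the parameter $-p$) to rewrite
\[
\vol_n(K)\,\rho_{F_p K}(\theta)^{p}
= \Gamma(1+p)\cos\!\left(\tfrac{\pi p}{2}\right)
\int_{\R^n} |\langle \theta, z\rangle|^{-p}\, g_K(z)\, dz.
\]
The integrand $|\langle \theta, z\rangle|^{-p}$ is non-negative and the cosine factor is positive on $(0,1)$, so the pointwise bound $g_K \leq g_M$ immediately yields
\[
\vol_n(K)\,\rho_{F_p K}(\theta)^{p}
\leq \Gamma(1+p)\cos\!\left(\tfrac{\pi p}{2}\right)
\int_{\R^n} |\langle \theta, z\rangle|^{-p}\, g_M(z)\, dz
= \vol_n(M)\,\rho_{F_p M}(\theta)^{p}.
\]
Taking $p$th roots gives the claim for $p\in(0,1)$.

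For the boundary case $p=1$, the factor $\cos(\pi p/2)$ in Lemma~\ref{l:F_p<1} degenerates, so I would instead use Lemma~\ref{l:F_1K_is_convex}, which gives $F_1 K = \pi\, I(R_{n-1}K)$. Combine three standard facts: (i) the monotonicity of radial $p$th mean bodies \eqref{eq:monotonic_R_pK} with $p=n-1$, yielding $R_{n-1}K \subset \bigl(\vol_n(M)/\vol_n(K)\bigr)^{1/(n-1)} R_{n-1}M$; (ii) the set-monotonicity of the intersection body operator $I$, which is immediate from its definition; (iii) the homogeneity $I(cN) = c^{n-1}I N$ for $c>0$. Chaining these yields
\[
F_1 K = \pi I(R_{n-1}K) \subset \pi\left(\tfrac{\vol_n(M)}{\vol_n(K)}\right) I(R_{n-1}M) = \tfrac{\vol_n(M)}{\vol_n(K)}\, F_1 M,
\]
as required. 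An alternative, equally valid route at $p=1$ is simply to pass to the limit $p\to 1^-$ in the inequality just established, using the Hausdorff continuity of $p\mapsto F_p K$ for $p\in(0,1]$ that follows from Proposition~\ref{p:mono} and the fact that the limiting bodies are all convex by Theorem~\ref{t:main_convexity}.

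There is no serious obstacle; the only mild technical point is the degeneration of the $\cos(\pi p/2)$ factor at the endpoint $p=1$, which is handled by either of the two routes above. It is worth noting why this argument does not extend past $p=1$: beyond $p=1$ the integral representation of $\rho_{F_p K}^p$ involving $g_K$ is not available, and indeed by Theorem~\ref{t:cube_not} the operator $F_p$ loses convexity (and, as noted after the statement, already loses this monotonicity since $[-1,1]^n\subset \sqrt{n}\B$ but $F_2([-1,1]^n)$ is unbounded while $F_2(\sqrt{n}\B)$ is a Euclidean ball).
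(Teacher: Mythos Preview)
Your proposal is correct and follows essentially the same approach as the paper. The only minor difference is presentational: for $p\in(0,1)$ you work directly with the pointwise inequality $g_K\leq g_M$ in the integral representation $\vol_n(K)\rho_{F_pK}^p(\theta)=\Gamma(1+p)\cos(\pi p/2)\int_{\R^n}|\langle\theta,z\rangle|^{-p}g_K(z)\,dz$, whereas the paper routes through the equivalent formula $\rho_{F_pK}^p(\theta)=\Gamma(p+1)\cos(\pi p/2)\int_{R_{n-p}K}|\langle x,\theta\rangle|^{-p}\,dx$ and the set-inclusion \eqref{eq:monotonic_R_pK} for $R_{n-p}K$ (which itself is derived from $g_K\leq g_M$); for $p=1$ your first route via $F_1K=\pi I(R_{n-1}K)$ matches the paper's proof exactly.
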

 \begin{proof}
     For $p\in (0,1)$, we have the formula $$\rho_{F_{p} K}(\theta) = \left(\Gamma(p+1)\cos\left(\frac{\pi p}{2}\right)\int_{R_{n-p} K}|\langle x,\theta \rangle|^{-p} dx\right)^{\frac{1}{p}}$$ 
     from \eqref{eq:F_p_neg} and \eqref{eq:L_p}. 
     Thus, the set-inclusion 
     $$R_{n-p}K\subset \left(\frac{\vol_n(M)}{\vol_n(K)}\right)^\frac{1}{n-p} R_{n-p}M = R_{n-p}\left(\left(\frac{\vol_n(M)}{\vol_n(K)}\right)^\frac{1}{n-p}M\right)$$ 
     derived from \eqref{eq:R_pinvariance}, and the homogeneity of $F_p K$ from \eqref{eq:homogeneous}, implies the claim. Similarly, from Lemma~\ref{l:F_1K_is_convex}, 
     \begin{align*}F_1 K = \pi I\left(R_{n-1}K\right) &\subset \pi I\left(R_{n-1} \left( \left(\frac{\vol_n(M)}{\vol_n(K)}\right)^\frac{1}{n-1}M\right)\right)
     \\
 &=F_1 \left( \left(\frac{\vol_n(M)}{\vol_n(K)}\right)^\frac{1}{n-1}M\right)\end{align*}
 which equals $\left(\frac{\vol_n(M)}{\vol_n(K)}\right) F_1M$, as claimed.
 \end{proof}

R. Gardner and A. Giannopoulos \cite{GG99} had defined, for $p>-1$, the $p$th cross-section body of $K$ to be the star body $C_p K$, given by the radial function, for $u\in \s^{n-1}$,
\begin{align*}
    \rho_{C_p K}(u) &= \left(\frac{1}{\vol_n(K)}\int_K \rho_{I(K-x)}^p(u)dx\right)^\frac{1}{p} 
    \\
    &= \left(\frac{1}{\vol_n(K)}\int_K \vol_{n-1}((K-x)\cap u^\perp)^pdx\right)^\frac{1}{p}.
\end{align*}
As $p\to +\infty$, $C_p K \to CK$, where $CK$ is the cross section body of $K$, first introduced by H. Martini \cite{MH92}. The body $CK$ is convex when $n=3$ \cite{MM99}, but not convex in general for $n\geq 4 $ \cite{UB99}. R. Gardner and A. Giannopoulos showed that $C_p K$ is not convex in general for $p$ large enough. Moreover, they also used Proposition~\ref{p:LZ}, with $L=R_{n+p} K$, to establish that $C_{1} K$ is convex, in fact they showed
$C_1 K = I(R_{n-1} K).$
But, this means we must have
$
F_1 K = \pi C_1 K$. 
 R. Gardner and A. Giannopoulos conjectured that $C_p K$ is convex when $p$ is small enough, say $-1<p\leq 1$, or for all $p>-1$ by imposing symmetry. These conjectures remain open.
 
 \subsection{Isoperimetric Inequalities}
 \label{sec:F_pisos}

 With Lemma~\ref{l:F_1K_is_convex} available, we can prove Theorem~\ref{t:Fpk_affine_iso}. To do so, we need the Busemann intersection inequality, see e.g. \cite[Corollary 9.4.5 and Remark 9.4.6 on pg. 373]{gardner_book}: for $M\subset \R^n$ be a star body, it holds 
 \begin{equation}\label{p:BI}
     \frac{\vol_n(IM)}{\vol_n(M)^{n-1}} \leq \frac{\omega^n_{n-1}}{\omega^{n-2}_{n}},
     \end{equation}
     with equality if and only if $M$ is an ellipsoid.
 \begin{proof}[Proof of Theorem~\ref{t:Fpk_affine_iso}]
 As mentioned, the case when $p\neq 1$ follows from Theorem~\ref{t:polar_BS} and Lemma~\ref{l:F_p<1}. As for the case when $p=1$, we have by Lemma~\ref{l:F_1K_is_convex}, \eqref{p:BI} and \eqref{eq:iso_R_pK} the inequalities
     \begin{align*}
         \frac{\vol_n(F_1 K)}{\vol_n(K)^{n-1}}&=\pi^n  \frac{\vol_n(I(R_{n-1} K))}{\vol_n(K)^{n-1}}\leq \omega_n^2\left(\frac{\pi\omega_{n-1}}{\omega_{n}}\right)^n\left(\frac{\vol_n(R_{n-1}K)}{\vol_n(K)}\right)^{n-1}
         \\
&\leq\omega_n^2\left(\frac{\pi\omega_{n-1}}{\omega_{n}}\right)^n\left(\frac{\vol_n(R_{n-1}\B)}{\vol_n(\B)}\right)^{n-1}.
     \end{align*}
     By the equality characterization, the final constant is the right-hand side of \eqref{eq:F_p_affine}.
 \end{proof}

 We now turn to Theorem~\ref{t:iso_dual_fourier}, which is the isoperimetric inequality for $ \widetilde W_{n-p}(F_p K)$. 

\begin{proof}[Proof of Theorem~\ref{t:iso_dual_fourier}]
It suffices to show that \begin{equation}
    \label{eq:iso_inequality_final}
        \widetilde W_{n-p}(F_p K) \leq \widetilde W_{n-p}(F_p K^\star),
    \end{equation}
    with equality if and only if $K$ is a translate of $K^\star$. Indeed, the relation $$K^\star = \left(\frac{\vol_n(K)}{\omega_n}\right)^\frac{1}{n}\B$$ and \eqref{eq:homogeneous} imply 
    \begin{align*}
    \widetilde W_{n-p}(F_p K^\star)& = \widetilde W_{n-p}\left(\left(\frac{\vol_n(K)}{\omega_n}\right)^\frac{n-p}{np}F_p \B\right) 
    \\
    &= \left(\frac{\vol_n(K)}{\omega_n}\right)^\frac{n-p}{n}\widetilde W_{n-p}(F_p\B).
    \end{align*}
    By Proposition~\ref{p:ball}, $F_p \B$ is a Euclidean ball; therefore, $$\widetilde W_{n-p}(F_p \B) = \omega_n^{\frac{n-p}{n}}\vol_n(F_p \B)^\frac{p}{n}.$$ Consequently, \eqref{eq:iso_inequality_final} produces \eqref{eq:F_p_affine_dual}.

    To this end, we need the following formula, obtained by using \eqref{eq:dual_quer} and polar coordinates:
    \begin{equation}
        \vol_n(K)\widetilde W_{n-p}(F_p K)=\frac{p}{n}\int_{\R^n}|x|^{p-n}|\widehat{\chi_K}(x)|^2dx=\frac{p}{n}\int_{\R^n}|x|^{p-n}\widehat{g_K}(x)dx.
        \label{eq:eq:fourier_body_2}
    \end{equation}
    By Lemma~\ref{l:mollifer}, we have $\vol_n(K)\widetilde W_{n-p}(F_p K) = \kappa(n,p)\left\langle |\cdot|^{-p},g_K\right\rangle$ for an explicit constant $\kappa(p,n)>0$. Next, we use the Layer cake formula \eqref{eq:layer_cake}, Fubini's theorem, the fact that $g_K=\chi_K\ast\chi_{-K}$, and the Riesz convolution inequality \eqref{eq:riesz} to deduce
    \begin{equation}
    \label{eq:using_riesz}
    \begin{split}
        \left\langle |\cdot|^{-p},g_K\right\rangle&=\int_{\R^n}|x|^{-p}g_K(x) dx 
        \\
        &= \int_0^{\infty}\left(\int_{\R^n}\chi_{t^{-\frac{1}{p}}\B}(x)(\chi_K\ast\chi_{-K})(x)dx\right)dt
        \\
        &\leq \int_0^{\infty}\left(\int_{\R^n}\chi_{t^{-\frac{1}{p}}\B}(x)(\chi_{K^\ast}\ast\chi_{-K^\ast})(x)dx\right)dt
        \\
        &= \int_{\R^n}|x|^{-p}g_{K^\star}(x)dx=\left\langle |\cdot|^{-p},g_{K^\star}\right\rangle.
    \end{split}
    \end{equation}
    We used the Riesz convolution inequality in this way because it yields equality if and only if $K$ is a translate of $K^\star$ (cf \cite[Theorem 1]{BA96}). Finally, \eqref{eq:iso_inequality_final} follows from \eqref{eq:quer_fourier_final} and \eqref{eq:using_riesz}. 
    \qedhere
\end{proof}

\section{The non-convexity of Fourier Mean Bodies for \texorpdfstring{$p>1$}{p>1}}
\label{sec:non}

In this section, we establish  Theorem~\ref{t:cube_not}, which is precisely the statement that $F_p [-1,1]^n$ is not convex when $p>1$.   We need the rudimentary identity, for $n\in\N$,
    \[
    \widehat{\chi_{[-1,1]^n}}(x) = 2^n\prod_{j=1}^n\frac{\sin(x_j)}{x_j}, \quad \text{where} \quad x=(x_1,\dots,x_n).
    \]
Thus,  we obtain from \eqref{eq:fourier_body} that
\begin{equation}
    \rho_{F_p[-1,1]^n}(x)=\left(2^np\int_{0}^{+\infty}\prod_{j=1}^n\left|\frac{\sin(x_jr)}{x_jr}\right|^2r^{p-1}dr\right)^\frac{1}{p}.
\end{equation}

Next, we show that the case for $n>2$ of Theorem~\ref{t:cube_not} follows from the planar case. Indeed, let $H$ be the plane spanned by the first two canonical basis vectors $e_1$ and $e_2$. Then, 
\begin{align*}
    \rho_{F_p[-1,1]^n\cap H}(x)&=\left(2^np\int_{0}^{+\infty}\left|\frac{\sin(x_1r)\sin(x_2r)}{x_1x_2r^2}\right|^2r^{p-1}dr\right)^\frac{1}{p} 
    \\
    &= 2^\frac{n-2}{p}\rho_{F_p[-1,1]^2}((x_1,x_2)).
\end{align*}
Supposing that $F_p[-1,1]^2$ is not convex, then the origin-symmetric body $F_p[-1,1]^n$ has a non-convex central section, i.e. $F_p[-1,1]^n$ is not convex.



We begin by introducing some technical results that will be used in the case $n =2$.

\begin{lem} \label{l:int-cos}
    Let $p \in (1,2)$. Then,
    $$
    \int_0^{+ \infty} x^{p -5 } 
    \left(1- \frac{x^2}{2} - \cos x\right)  dx = -\frac{\cos \left(\frac{\pi p}{2} \right)\Gamma(p+1)}{(p-4)(p-3)(p-2)(p-1)p}.
    $$
\end{lem}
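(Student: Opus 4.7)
\textbf{Proof plan for Lemma~\ref{l:int-cos}.} The plan is to reduce the integral to the Dirichlet-type identity \eqref{eq:sine_identity} via three successive integrations by parts, each of which lowers a power of $x$ (from $p-5$ up to $p-2$) and differentiates the auxiliary factor $1-\tfrac{x^2}{2}-\cos x$ into, successively, $\sin x-x$, then $\cos x-1$, and finally $-\sin x$. The key point to monitor throughout is that all boundary terms vanish on $(0,\infty)$: near $x=0$ the function $1-\tfrac{x^2}{2}-\cos x=-\tfrac{x^4}{24}+O(x^6)$, its derivative $\sin x-x=-\tfrac{x^3}{6}+O(x^5)$, and $1-\cos x=\tfrac{x^2}{2}+O(x^4)$ each vanish to exactly the order needed to kill the $x^{p-k}$ prefactor produced by the antiderivative (giving $O(x^p)\to 0$ as $p>0$); near $x=\infty$ the prefactors $x^{p-4}$, $x^{p-3}$, $x^{p-2}$ paired with the corresponding algebraic/oscillatory factors produce terms dominated by $x^{p-2}\to 0$, which is decisive because $p<2$.

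Carrying out the three integrations by parts in this order, one obtains successively
\begin{align}
\int_0^{+\infty}\! x^{p-5}\!\left(1-\tfrac{x^2}{2}-\cos x\right)dx
&= -\tfrac{1}{p-4}\!\int_0^{+\infty}\! x^{p-4}(\sin x-x)\,dx\\
&= \tfrac{1}{(p-4)(p-3)}\!\int_0^{+\infty}\! x^{p-3}(\cos x-1)\,dx\\
&= \tfrac{1}{(p-4)(p-3)(p-2)}\!\int_0^{+\infty}\! x^{p-2}\sin x\,dx.
\end{align}
At this point the remaining integral falls exactly within the range $p\in(1,2)$ covered by \eqref{eq:sine_identity} with $a=1$, giving
\[
\int_0^{+\infty} x^{p-2}\sin x\,dx=\frac{\Gamma(p)\cos\!\left(\tfrac{\pi p}{2}\right)}{1-p}.
\]
Combining with $\Gamma(p+1)=p\Gamma(p)$ and simplifying signs yields the claimed formula.

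The only genuinely delicate step is justifying that the three boundary evaluations all vanish at both endpoints, but this is automatic from the Taylor expansions at $0$ and from the strict inequality $p<2$ at $\infty$; none of the intermediate integrals is merely conditionally convergent, so Fubini/integration-by-parts manipulations are clean. Once the boundary bookkeeping is in hand, the calculation is a direct telescoping reduction to \eqref{eq:sine_identity}.
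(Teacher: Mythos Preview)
Your proof is correct and follows essentially the same route as the paper: three successive integrations by parts reducing the integral to $\int_0^\infty x^{p-2}\sin x\,dx$, then an application of \eqref{eq:sine_identity}. Your added verification that all boundary terms vanish is useful (the paper leaves this implicit); the only minor slip is the claim that none of the intermediate integrals is merely conditionally convergent---the final one $\int_0^\infty x^{p-2}\sin x\,dx$ is in fact only conditionally convergent for $p\in(1,2)$, but this does not affect the validity of the integration-by-parts reduction.
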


\begin{proof}
    
     By two applications of integration by parts, we have 
    \begin{align*}
    \int_{0}^{+ \infty} x^{p-5}\left( 1 - \frac{x^2}{2} - \cos x \right) dx 
    &= 
    - \int_{0}^{+ \infty} \frac{x^{p-4}}{p-4} \left( -x + \sin x \right) dx
    \\
    &= 
    \int_{0}^{+ \infty} \frac{x^{p-3}}{(p-4)(p-3)} \left( -1 + \cos x \right) dx
    \\
    &= 
    \int_{0}^{+ \infty} \frac{x^{p-2}}{(p-4)(p-3)(p-2)} \sin x dx.
\end{align*}
    We conclude using \eqref{eq:sine_identity}.
\end{proof}

Henceforth, we define the positive constant
\[
        d_p := \frac{\cos \left(\frac{\pi p}{2} \right)\Gamma(p+1)2^{1-p}}{(4-p)(3-p)(2-p)(1-p)}, \quad 1<p<2.
    \]
\begin{lem}
    \label{l:RF}
    Let $p \in (1,2)$. For $x_1 \geq x_2 >0$, we have
    \begin{align*}
        &p\int_0^{+\infty} \sin^2(x_1r)\sin^2(x_2r) r^{p-5} dr
        \\
        &=
        d_p
        \left(
        (x_1-x_2)^{4-p}
        +(x_1+x_2)^{4-p} -2x_1^{4-p}-2x_2^{4-p}
        \right).
    \end{align*}
\end{lem}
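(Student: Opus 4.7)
\textbf{Proof plan for Lemma~\ref{l:RF}.} The plan is to reduce the integral to the Mellin-type identity of Lemma~\ref{l:int-cos} by expanding the product of sines into a linear combination of cosines, and then handling the singularity at $r=0$ by inserting the quadratic counterterms permitted by Lemma~\ref{l:int-cos}.

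First, I would set $a=2x_1$, $b=2x_2$ and apply the double-angle identity twice to obtain
\[
8\sin^2(x_1r)\sin^2(x_2r) = 2-2\cos(ar)-2\cos(br)+\cos((a-b)r)+\cos((a+b)r),
\]
which I rearrange into the telescoping form
\[
8\sin^2(x_1r)\sin^2(x_2r) = 2\bigl(1-\cos(ar)\bigr)+2\bigl(1-\cos(br)\bigr)-\bigl(1-\cos((a-b)r)\bigr)-\bigl(1-\cos((a+b)r)\bigr).
\]
Checking the coefficients of $r^2$ in the Taylor expansions shows that they cancel: $2a^2+2b^2-(a-b)^2-(a+b)^2=0$. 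This is the key observation, because the individual $r^{p-5}(1-\cos(ur))$ integrals diverge at $r=0$ when $p<2$, but adding the harmless quadratic counterterm $-u^2r^2/2$ inside each $(1-\cos(ur))$ factor contributes a total that is zero by the identity above. Therefore I may rewrite
\[
8\sin^2(x_1r)\sin^2(x_2r)=\!\sum_{u\in\{a,b\}}\!2\!\left(1-\tfrac{u^2r^2}{2}-\cos(ur)\right)-\!\!\sum_{u\in\{a-b,a+b\}}\!\!\left(1-\tfrac{u^2r^2}{2}-\cos(ur)\right)
\]
where every individual term is absolutely integrable against $r^{p-5}dr$ on $(0,\infty)$ for $p\in(1,2)$.

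Next, I would perform the substitution $x=ur$ (with $u>0$) in each term, which yields
\[
\int_0^\infty\!\!\left(1-\tfrac{u^2r^2}{2}-\cos(ur)\right)r^{p-5}dr=u^{4-p}\!\int_0^\infty\!\!\left(1-\tfrac{x^2}{2}-\cos x\right)x^{p-5}dx,
\]
so that Lemma~\ref{l:int-cos} gives this integral as $-\,u^{4-p}\cos(\pi p/2)\Gamma(p+1)/[p(p-1)(p-2)(p-3)(p-4)]$. Here I use $x_1\geq x_2>0$ so that $a-b\geq 0$ and the power $(a-b)^{4-p}$ is unambiguous.

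Finally, I combine everything. Summing the four scaled contributions produces the combination
\[
2a^{4-p}+2b^{4-p}-(a-b)^{4-p}-(a+b)^{4-p}=2^{4-p}\bigl[2x_1^{4-p}+2x_2^{4-p}-(x_1-x_2)^{4-p}-(x_1+x_2)^{4-p}\bigr].
\]
Dividing by $8$, multiplying by $p$, and collecting the constants using $(p-4)(p-3)(p-2)(p-1)=(4-p)(3-p)(2-p)(1-p)$ and $2^{4-p}/8=2^{1-p}$ reproduces the constant $d_p$, with the sign producing exactly $(x_1-x_2)^{4-p}+(x_1+x_2)^{4-p}-2x_1^{4-p}-2x_2^{4-p}$ as stated. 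There is no real obstacle here; the only delicate point is the justification for inserting the quadratic counterterms, which is exactly the vanishing of $2a^2+2b^2-(a-b)^2-(a+b)^2$ observed at the outset, and the bookkeeping of signs from the factor $(p-k)$ for $k=1,2,3,4$.
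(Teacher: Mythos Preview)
Your proposal is correct and is essentially the same argument as the paper's: both expand $\sin^2(x_1r)\sin^2(x_2r)$ into cosines, insert the quadratic counterterm $-u^2r^2/2$ in each piece (the paper does this implicitly by introducing $C(t)=1-\tfrac{t^2}{2}-\cos t$ and writing the identity directly in terms of $C$), and then apply Lemma~\ref{l:int-cos} after a scaling substitution. Your explicit check that $2a^2+2b^2-(a-b)^2-(a+b)^2=0$ is exactly what makes the paper's identity in terms of $C$ hold, so the two proofs are the same up to presentation.
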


\begin{proof}
    Define the function $C(t) = 1-\frac{t^2}{2} - \cos t.$ Then, from trigonometric identities, we have 
    \begin{equation}
        \label{eq:inRF}
        \begin{split}
        &\sin^2(x_1r)\sin^2(x_2r) 
        \\
        &=\frac{1}{4}(1- \cos(2x_1r))(1-\cos(2x_2r) )
        \\
        &= \frac{1}{4}- \frac{\cos(2x_1r)}{4}-\frac{\cos(2x_2r)}{4}+\frac{\cos(2x_1r)\cos(2x_2r)}{4}
        \\
        &= \frac{1}{4} -\frac{\cos(2x_1r)}{4}-\frac{\cos(2x_2r)}{4} + \frac{\cos(2(x_1+x_2)r)}{8}  +\frac{\cos(2(x_1-x_2)r)}{8} 
        \\
        &=
        \frac{C(2x_1r)}{4} +\frac{C(2x_2r)}{4} - \frac{C(2(x_1+x_2)r)}{8} -\frac{C(2(x_1-x_2)r)}{8}. 
        \end{split}
    \end{equation}
  Using a change of variable and Lemma \ref{l:int-cos}, we have 
    \begin{align*}
        \int_0^{+\infty} C(2x_1r)r^{p-5} dr &= (2x_1)^{4-p} \int_0^{+\infty} t^{p-5} C(t) dt 
        \\
        &= -(2x_1)^{4-p}\frac{\cos \left(\frac{\pi p}{2} \right)\Gamma(p)}{(4-p)(3-p)(2-p)(1-p)}.
    \end{align*}
    Similarly, we integrate $C(2x_2r),C(2(x_1+x_2)r)$ and $C(2(x_1-x_2)r)$, and substitute them into the integration of \eqref{eq:inRF} to complete the proof.
\end{proof}

\begin{prop}
    The star-shaped set $F_p[-1,1]^2$ is not a convex body for $p >1$.
\end{prop}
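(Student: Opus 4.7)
The plan is to reduce the statement to the range $p \in (1,2)$ and, in that range, to exhibit a cusp-type failure of convexity at the axial boundary points of $F_p[-1,1]^2$.

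First I would dispose of the range $p \geq 2$ by a direct one-dimensional observation: for $K = [-1,1]^2$, the quantity $\rho_{F_p K}(e_1)^p$ reduces to (a constant multiple of) $\int_0^\infty \sin^2(r)\, r^{p-3}\, dr$, which diverges at infinity for $p \geq 2$. Thus $F_p K$ is unbounded along $e_1$, hence not a convex body. The substantive work is for $p \in (1,2)$.

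Next, I would apply Lemma~\ref{l:RF} with $x_1 = 1$ and $x_2 = t \in (0,1]$ to obtain the closed form
\[
\rho_{F_p K}(1,t)^p = \frac{4 d_p}{t^2}\Bigl((1-t)^{4-p} + (1+t)^{4-p} - 2 - 2\, t^{4-p}\Bigr),
\]
and Taylor-expand the analytic part about $t = 0$. The smooth contributions combine into $(4-p)(3-p)\, t^2 + O(t^4)$, yielding
\[
\rho_{F_p K}(1,t)^p = C_p - 8 d_p\, t^{2-p} + O(t^2), \qquad C_p := 4 d_p (4-p)(3-p).
\]
Taking $p$-th roots gives $f(t) := \rho_{F_p K}(1,t) = a - b\, t^{2-p} + o(t^{2-p})$, where $a := C_p^{1/p}$ and $b := 8 d_p/(p\, a^{p-1}) > 0$. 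The constant $a$ matches $\rho_{F_p K}(e_1)$ as computed via \eqref{eq:sine_identity}, so $f$ extends continuously to $t=0$. The key feature is that, for $p \in (1,2)$, the correction exponent $2-p$ lies strictly in $(0,1)$, producing a H\"older cusp.

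Finally, for small $\epsilon > 0$ I would test the midpoint $M := (A + B)/2$ of the boundary points $A = (a,0)$ and $B = f(\epsilon)(1,\epsilon)$ of $F_p K$. By the $(-1)$-homogeneity of $\rho_{F_p K}$, the membership $M \in F_p K$ is equivalent to $M_1 \leq f(M_2/M_1)$. A direct expansion gives $\tau := M_2/M_1 = \epsilon/2 + O(\epsilon^{3-p})$, and after substituting into the asymptotic for $f$,
\[
f(\tau) - M_1 = b\, \epsilon^{2-p}\Bigl(\tfrac{1}{2} - \tfrac{1}{2^{\,2-p}}\Bigr) + o(\epsilon^{2-p}).
\]
Since $2^{2-p} < 2$ whenever $p > 1$, the leading coefficient is strictly negative, forcing $M \notin F_p K$ for all sufficiently small $\epsilon$, which contradicts convexity. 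The main bookkeeping obstacle is to verify that the various higher-order terms — the $O(t^2)$ in the expansion of $\rho_{F_p K}(1,t)^p$ and the $O(\epsilon^{3-p})$ contribution to $\tau^{2-p}$ — are genuinely absorbed into the $o(\epsilon^{2-p})$ error. This follows from the elementary inequalities $3-p > 2-p$ and $2(2-p) > 2-p$ on $p \in (1,2)$, which guarantee that the cusp term dominates as $\epsilon \to 0^+$.
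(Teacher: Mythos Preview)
Your argument is correct and rests on exactly the same expansion as the paper: both proofs use Lemma~\ref{l:RF} to obtain the closed form for $\rho_{F_p[-1,1]^2}$, and both isolate the non-smooth term $t^{2-p}$ (equivalently $\theta^{2-p}$) with exponent in $(0,1)$ as the source of the failure near the axial direction $e_1$. The only difference is in the final verification: the paper works in polar coordinates and checks that the signed curvature $\rho^2 + 2(\rho')^2 - \rho\rho''$ is negative near $\theta=0$ (after rewriting it in terms of $r=(\rho/(8d_p))^p$ and tracking $r,r',r''$), whereas you do a direct secant test by showing the midpoint of two nearby boundary points lies outside the body. Your midpoint test is slightly more elementary, since it avoids the curvature formula and the change-of-variable bookkeeping, but the two arguments are equivalent in content and identify the same geometric phenomenon.
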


\begin{proof}
    First, we observe that when $p \geq 2$, the set is not compact, as the radial function in the direction of $e_1:=(1,0)$ tends to infinity. Indeed, this follows from the divergence of the integral $ \int_0^{+\infty} \sin ^2(r) r^{p-3} d r.$ We henceforth assume that $1<p <2$. 
    
    Using Lemma~\ref{l:RF} with $x_1=\cos\theta$, $x_2=\sin\theta$, where $\theta \in [0, \pi/4]$, we obtain the radial function of $F_p[-1,1]^2$ in terms of $\theta$ is given as follows:
    \begin{align*}
        &\rho (\theta) := \rho_{F_p[-1,1]^2} ((\cos\theta,\sin\theta))
        \\
        &=\left(\frac{4d_p
        \left(
        (\cos \theta-\sin \theta)^{4-p}
        +(\cos \theta+\sin \theta)^{4-p} -2\cos^{4-p} \theta-2\sin^{4-p} \theta
        \right)}{\cos^2 \theta \sin^2 \theta}\right)^\frac{1}{p}
        \\
        &=\!\!\left(\!\!\frac{8d_p
        \left(
        2^{\frac{2-p}{2}}\left(\sin\left(\frac{\pi}{4} -  \theta\right)^{4-p}
        +\sin\left(\frac{\pi}{4} +  \theta\right)^{4-p}\right) \!-\!\left(\cos^{4-p} \theta+\sin^{4-p} \theta
        \right)\right)}{\cos^2 \theta \sin^2 \theta}\!\!\right)^\frac{1}{p}
    \end{align*}
    We will show that the curvature of $F_p[-1,1]^2$ is negative near $e_1$, i.e. one has
    \begin{equation}
    \rho^2 + 2(\rho')^2 -\rho \rho''  <0.
    \label{eq:rho_function}
    \end{equation} 
    near $\theta=0$. It then follows that $F_p[-1,1]^2$ is not convex.  Denote $r(\theta) = \frac{\rho^p(\theta)}{8d_p}$. The differential inequality  \eqref{eq:rho_function} rewrites as the following in the terms of $r$ and its derivatives:
    \begin{equation}
    \label{eq:r_function}
       r^2 +\left(\frac{1}{p} + \frac{1}{p^2}\right) (r')^2 - \frac{1}{p} r r''<0.
    \end{equation}
    We expand the function $\theta\mapsto \sin (\pi/4 \pm \theta)$ into its Taylor series to get 
    \begin{align*}
        &2^{\frac{2-p}{2}} \sin^{4-p} \left(\frac{\pi}{4} \pm \theta\right)
        = \frac{1}{2} \left( 1\pm \theta - \frac{1}{2} \theta^2 
        + \mathcal{O}(\theta^3) 
        \right)^{4-p}
        \\
        &= \frac{1}{2} + \frac{4-p}{2}\left( \pm \theta - \frac{1}{2} \theta^2 
        \right) 
        + \frac{(4-p)(3-p)}{4} \left( \pm \theta - \frac{1}{2} \theta^2 
        \right)^{2}
        +\mathcal{O}(\theta^3).
    \end{align*}
    Hence,
    \begin{align*}
        2^{\frac{
    2-p}{2}} \left(\sin^{4-p} \left(\frac{\pi}{4} - \theta\right) + \sin^{4-p} \left(\frac{\pi}{4} + \theta\right)\right) 
        &= 1+\frac{(4-p)(2-p)}{2}\theta^2 
        +\mathcal{O}(\theta^4).
    \end{align*}
     Similarly, 
    \begin{align*}
        \sin^{4-p} (\theta)  &= \left( \theta 
        +\mathcal{O}(\theta^3) 
        \right)^{4-p} 
        = \theta^{4-p} \left(1 
        +\mathcal{O}(\theta^2)
        \right)^{4-p}
        \\
        &= \theta^{4-p} \left(1 
        +\mathcal{O}(\theta^2)
        \right)
        =  \theta^{4-p} 
        +\mathcal{O}(\theta^{6-p}),
    \\
    &\text{and}
    \\
        \cos^{4-p} (\theta)  &= \left( 1 - \frac{\theta^2}{2} 
        +\mathcal{O}(\theta^4)
        \right)^{4-p} 
        =  1 - \frac{4-p}{2} \theta^2 
        +\mathcal{O}(\theta^4).
    \end{align*}
    Thus, the numerator of $r(\theta)$ can be written as the following 
    \begin{equation}
        \label{eq:num_r_function}
        \begin{split}
            &
        2^{\frac{2-p}{2}}\left(\sin^{4-p}\left(\frac{\pi}{4} -  \theta\right)
        +\sin^{4-p}\left(\frac{\pi}{4} +  \theta\right)\right) -\left(\cos^{4-p} \theta+\sin^{4-p} \theta\right)
        \\
        &= \frac{(4-p)(3-p)}{2}\theta^2-  \theta^{4-p} 
        +\mathcal{O}(\theta^4).
        \end{split}   
    \end{equation}
    Using that, 
    \begin{align*}
        \frac{1}{\sin^2 \theta} &=\frac{1}{\left( \theta - \frac{\theta^3}{6} +\mathcal{O}(\theta^5)\right)^2} = \frac{1}{\theta^2\left( 1 - \frac{\theta^2}{6} +\mathcal{O}(\theta^4)\right)^2} 
        \\
        &= \frac{1}{\theta^2} \left(1 +\frac{\theta^2}{3} 
        +\mathcal{O}(\theta^4)\right),
        \\
       &\text{and}
        \\
        \frac{1}{\cos^2 \theta} &=  \frac{1}{\left(1 - \frac{\theta^2}{2} +\mathcal{O}(\theta^4) \right)^2} = 1+ \theta^2 +\mathcal{O}(\theta^4),
    \end{align*}
    we get, 
    \begin{equation}
    \label{eq:den_r_function}
        \frac{1}{\sin^2 \theta \cos^2 \theta} = \frac{1}{\theta^2} \left( 1+ \frac{4}{3} \theta^2 +\mathcal{O}(\theta^4) \right).
    \end{equation}
    Using \eqref{eq:num_r_function} and \eqref{eq:den_r_function}, we have
   $$
        r(\theta) = \frac{(4-p)(3-p)}{2} -  \theta^{2-p}
        +\mathcal{O}(\theta^{2}).
    $$
    It therefore follows that
    \begin{equation}\label{derivr}        r'(\theta) = - (2-p) \theta^{1-p}  +\mathcal{O}(\theta)
        \quad
        \text{and}
        \quad
        r''(\theta) = - (2-p) (1-p) \theta^{-p}  +\mathcal{O}(1).
    \end{equation}
    Substituting \eqref{derivr} into the left hand side of \eqref{eq:r_function}, we get
    \begin{align*}
        &r^2 +\left(\frac{1}{p} + \frac{1}{p^2}\right) (r')^2 - \frac{1}{p} r r''
        \\
        &= \left( \frac{(4-p)(3-p)}{2} + \mathcal{O}(\theta^{2-p})\right)^2 + \left( \frac{1}{p} + \frac{1}{p^2 }\right)\left( - (2-p) \theta^{1-p}  
        +\mathcal{O}(\theta)\right)^2 
        \\
        &\,\,\,\,\,\,
        -\frac{1}{p}\left( \frac{(4-p)(3-p)}{2} 
        + \mathcal{O}(\theta^{2-p})\right)\left(- (2-p) (1-p) \theta^{-p}  
        +\mathcal{O}(1)\right).
    \end{align*}
    Multiplying by $\theta^p$ and taking the limit $ \theta \to 0$, we obtain 
    \[
        \frac{(4-p)(3-p)(2-p)(1-p)}{2p},
    \]
    which is less than $0$ since $1<p <2$. This means that near $e_1$ there is a region of the boundary of $F_{p}[-1,1]^2$ with negative curvature. 
\end{proof}

\section{Open Questions}
\label{sec:open}
There are a few natural questions that remain open. It was shown by R. Gardner and G. Zhang \cite{GZ98} that two convex bodies $K,D\subset \R^n$ satisfy $R_p K = R_p D$ for all $p$ if and only if $g_K =g_D$ pointwise. Actually, one only needs equality of $R_p K$ and $R_p D$ on an open interval of values of $p$. In turn, whether or not $K$ is determined by $g_K$ is known to be false in general (see the survey \cite{GB23}), but it is true when restricted to origin-symmetric bodies.

A more difficult question is whether or not $R_p K = R_p D$ for a fixed $p >-1$ implies $g_K=g_D$. We formally state this question.

\begin{ques}
    Fix $p>-1$. Let $K,D\subset \R^n$ be convex bodies such that $R_p K = R_p D$. Then, is it true that $g_K=g_D$?
\end{ques}

If one presupposes that $K$ and $D$ are origin-symmetric, it is natural to upgrade the conclusion to $K=D$.

\begin{ques}
    Fix $p>-1$. Let $K,D\subset \R^n$ be origin-symmetric convex bodies such that $R_p K = R_p D$. Then, is it true that $K=D$?
\end{ques}

We then ask the same question for $F_p K$.
\begin{ques}
    Fix $p>0$. Let $K,D\subset \R^n$ be origin-symmetric convex bodies such that $F_p K = F_p D$. Then, is it true that $K=D$?
\end{ques}

We know $p(K)\geq p([-1,1]^n)$, which can be viewed as an affine inequality for the Fourier index $p(K)$. Our next question concerns the opposite direction.
\begin{ques}
    Let $K\subset \R^n$ be a convex body. Is it true that $p(K) \leq p(\B)$?
\end{ques}
\noindent The above question can be equivalently stated as: is it true that $\Omega(K) \leq \frac{n+1}{2}$, where $\Omega(K)$ is given by \eqref{eq:Omega}? 

Another natural question concerning operators on convex bodies is their fixed point up to, perhaps, a constant. For example, every origin-symmetric convex body is a fixed point of $K\mapsto \frac{1}{2}DK$. The study of the fixed points of such operators has a rich history, see e.g. \cite{ANRY21,FNRZ11,CS17,RD22}. Investigations of fixed points of various operators on convex bodies, such as the intersection body operator, were systematically studied in the groundbreaking work by E. Milman, S. Shabelman, and A. Yehudayoff \cite{MSY25}. In particular, they showed that centered ellipsoids are the only fixed points of the intersection body operator for $n\geq 3$. In this vein, we ask the following questions.

\begin{ques}
    Let $K\subset \R^n$ be a convex body and fix $p>-1$. Suppose that there exists $c>0$ such that
    $R_p K =c K$. Then, is $K$ a centered ellipsoid?
\end{ques}

\begin{ques}
    Let $K\subset \R^n$ be a convex body and fix $p>0$. Suppose that there exists $c>0$ such that
    $F_p K =c K$. Then, is $K$ a centered ellipsoid?
\end{ques}

An immediate observation is that $K$ must be origin-symmetric in both questions. Finally, the keen-eyed reader may have noticed that we did not list a Fourier characterization of $R_p K$ when $p\in (-1,0)$. This regime of $p$ remains, for the most part, exasperatingly out of reach. However, recall the following classification.
\begin{prop}
\label{p:embed}
    Let $M\subset \R^n$ be an origin-symmetric star body and let $q\in (0,2)$. We say $M$ embeds in $L^q$ if it satisfies the following equivalent conditions:
    \begin{enumerate}
        \item The function $\Gamma\left(-\frac{q}{2}\right)\rho_M^{-q}$ is a positive-definite distribution (\cite[Theorem 6.10]{AK05});
        \item The function $e^{-\rho_M^{-q}}$ is a positive-definite distribution (\cite[Theorem 6.6]{AK05});
        \item There exists a finite Borel measure $\mu_q$ on $\s^{n-1}$ such that (\cite[Lemma 6.4]{AK05})
        \[
        \rho_M=\left(\int_{\s^{n-1}}|\langle \cdot,u \rangle|^qd\mu_q(u)\right)^{-\frac{1}{q}}.
        \]
    \end{enumerate}
\end{prop}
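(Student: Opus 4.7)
The plan is to treat Proposition~\ref{p:embed} essentially as a compilation of classical equivalences from the theory of embeddings in $L^q$ spaces, $0<q<2$, and to prove the three characterizations by establishing the implications $(3)\Rightarrow(1)\Rightarrow(2)\Rightarrow(3)$ using a single unifying tool, namely the Fourier analytic characterization of positive-definite homogeneous distributions. All ingredients are already present in Section~\ref{sec:fourier_facts} and the references cited alongside the statement.

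First I would tackle $(3)\Rightarrow(1)$. Assuming the integral representation $\rho_M^{-q}=\int_{\s^{n-1}}|\langle\cdot,u\rangle|^q\,d\mu_q(u)$, apply Proposition~\ref{p:Koldobsky_cosine} (with $q$ in the role of the cosine-transform exponent, noting $q\in(0,2)$ is not an even integer). The Fourier transform of each $|\langle\cdot,u\rangle|^q$ is a positive multiple of $\delta_{u^\perp}\!\cdot|\cdot|^{-n-q}$ multiplied by $-\sin(\pi q/2)\Gamma(q+1)$; using the reflection identity \eqref{eq:gamma_ident} together with $\Gamma(-q/2)<0$ for $q\in(0,2)$, the sign $-\sin(\pi q/2)\Gamma(q+1)$ matches the sign of $\Gamma(-q/2)$, and the whole expression is a positive distribution on $\R^n\setminus\{o\}$, integrated against the positive measure $\mu_q$. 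This yields the positive-definiteness of $\Gamma(-q/2)\rho_M^{-q}$.

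Next, for $(1)\Rightarrow(2)$, I would invoke the classical subordination of the exponential to a Gaussian via the positive $(q/2)$-stable law. Specifically, there is a positive measure $\nu_q$ on $(0,\infty)$ such that $e^{-s}=\int_0^\infty e^{-s^{2/q}t}\,d\nu_q(t)$ after the substitution $s=\rho_M^{-q}(x)$, one obtains
\[
e^{-\rho_M^{-q}(x)}=\int_0^\infty e^{-t\rho_M^{-q}(x)\cdot(\text{const})}\,d\nu_q(t),
\]
so it suffices to show that $x\mapsto e^{-t\rho_M^{-q}(x)}$ is positive-definite for every $t>0$; but this follows by expanding $e^{-t\rho_M^{-q}}=\sum(-t)^k\rho_M^{-qk}/k!$ carefully, or more cleanly by noting that (1) provides, through Bochner's theorem applied to the tempered distribution $\Gamma(-q/2)\rho_M^{-q}$, a nonnegative measure whose exponential generating functional is precisely $e^{-\rho_M^{-q}}$ up to normalization.

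Finally, $(2)\Rightarrow(3)$ is the Bretagnolle--Dacunha-Castelle--Krivine theorem: if $e^{-\rho_M^{-q}}$ is positive-definite, one shows that $\rho_M^{-1}$ is the quasi-norm of a subspace of $L^q$, and the Lévy-Khintchine representation for the symmetric $q$-stable process produces the spectral measure $\mu_q$ on $\s^{n-1}$ realizing the integral formula in (3). The main technical point, and the step I expect to require the most care, is keeping track of the sign factor $\Gamma(-q/2)<0$ in (1) consistently throughout, since this is precisely what turns the Fourier analytic positivity condition into a genuine positive-definiteness statement, and is the reason (3) is equivalent to a positivity condition on $-\rho_M^{-q}$ rather than on $\rho_M^{-q}$ itself. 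Since each equivalence is already documented in~\cite{AK05} (Theorems~6.10, 6.6 and Lemma~6.4), in the paper itself I would simply assemble the above outline into a short paragraph referring the reader to these results.
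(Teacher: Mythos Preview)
The paper does not prove Proposition~\ref{p:embed} at all; it is stated as a recalled classification, with each item directly attributed to the relevant result in \cite{AK05} (Theorems~6.10, 6.6, and Lemma~6.4). Your final sentence---that you would ``simply assemble the above outline into a short paragraph referring the reader to these results''---is exactly what the paper does, so your approach matches. The proof sketch you provide beforehand is additional content not present in the paper, and while the outline is broadly reasonable, some of the details (e.g., the expansion $e^{-t\rho_M^{-q}}=\sum(-t)^k\rho_M^{-qk}/k!$ to deduce positive-definiteness, or the precise formulation of the subordination step) would need more care to be made rigorous; but since neither you nor the paper actually intend to carry this out, this is moot.
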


Notice, for example, that every polar projection body $\Pi^\circ K$ of a convex body $K\subset \R^n$ embeds in $L^1$ by the Cauchy formula for $\vol_{n-1}(P_{\theta^\perp} K)$ and item (3). With Proposition~\ref{p:embed} available, we establish the following result for $R_p K$ in the plane.
\begin{thm}
\label{t:embeds}
Let $K\subset \R^2$ be a planar convex body. Then, for $p\in (-1,0)$, $R_p K$ embeds in $L^{-p}$, or, equivalently, $\Gamma\left(\frac{p}{2}\right)\rho_{R_p K}^{p}$ is a positive-definite distribution.
\end{thm}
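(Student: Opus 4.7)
My plan is to realize the statement as a chain of three classical planar facts. Fix $p\in(-1,0)$ and set $q:=-p\in(0,1)$. By Proposition~\ref{p:embed}, the positive-definiteness of $\Gamma(p/2)\rho_{R_pK}^{p}=\Gamma(-q/2)\rho_{R_pK}^{-q}$ is equivalent to $R_pK$ embedding in $L^{q}$, and by item (3) of that proposition it suffices to exhibit a finite even Borel measure $\mu_q$ on $\s^1$ with $\rho_{R_pK}^{-q}(\theta)=\int_{\s^1}|\langle\theta,u\rangle|^{q}\,d\mu_q(u)$ for $\theta\in\s^1$.

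The first step is to ensure that $R_pK$ is a genuine origin-symmetric convex body. Origin-symmetry in any dimension is immediate from the evenness of $g_K$. Convexity in the plane for $p\in(-1,0)$ is precisely the recent result of J.~Haddad \cite{JH26}; this is the only place in the proof where the planar hypothesis is actually invoked for $R_pK$ itself.

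The second step is to use the classical fact that every origin-symmetric convex body in $\R^2$ is a zonoid (see, e.g., \cite{gardner_book} or \cite[Chapter~4]{AK05}). Applying it to the polar body $(R_pK)^\circ\subset\R^2$, which is again origin-symmetric and convex, one obtains a finite even Borel measure $\mu_1$ on $\s^1$ such that
\[
\rho_{R_pK}^{-1}(\theta)=h_{(R_pK)^\circ}(\theta)=\int_{\s^1}|\langle\theta,u\rangle|\,d\mu_1(u),\qquad \theta\in\s^1,
\]
which is exactly condition (3) of Proposition~\ref{p:embed} with $q=1$; hence $R_pK$ embeds in $L^1$. To conclude, I would invoke the monotonicity of $L^q$-embeddings for origin-symmetric star bodies: if $M$ embeds in $L^r$ and $0<s\leq r$, then $M$ embeds in $L^s$ (see, e.g., \cite[Theorem~6.17]{AK05}). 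Setting $r=1$ and $s=-p$ yields the claim.

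Once the ingredients are assembled, the proof reduces to bookkeeping; the real obstruction is that the strategy is \emph{genuinely} two-dimensional, and this is where the ``main difficulty'' lies for any hoped-for extension. Both pillars of the plan fail in higher dimensions: convexity of $R_pK$ for $p\in(-1,0)$ remains open for $n\geq 3$, and the zonoid representation of symmetric convex bodies breaks down already at $n=3$ (the crosspolytope being the textbook obstruction). A higher-dimensional analogue of Theorem~\ref{t:embeds} therefore appears to require a direct Fourier-analytic computation of $\widehat{\rho_{R_pK}^{p}}$ for $p\in(-1,0)$, complementing Theorem~\ref{t:fourier_small_p} and Lemma~\ref{l:fourier_large_p}, in which the expected integral over $0<r<\infty$ splits into the principal-value piece coming from the constant term in the definition of $R_pK$ and a regularized piece coming from $g_K$; controlling the sign of the result is precisely the delicate point.
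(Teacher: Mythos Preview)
Your proof is correct and follows the same three-step route as the paper: Haddad's planar convexity result for $R_pK$, the fact that every origin-symmetric planar convex body embeds in $L^1$ (which you unpack via the zonoid representation of its polar, whereas the paper simply cites \cite[Corollary~6.8]{AK05}), and the downward monotonicity of $L^q$-embeddings (the paper invokes \cite[Corollary~6.7]{AK05} rather than Theorem~6.17). Your closing remarks on the two-dimensional obstructions match the paper's own discussion after the theorem.
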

\begin{proof}
    It is well-known that every origin-symmetric planar convex body embeds in $L^1$ (see, e.g. \cite[Corollary 6.8]{AK05}). J. Haddad \cite{JH26} recently showed that $R_p K$ is an origin-symmetric convex body for all $p>-1$ when $n=2$. Therefore, $R_p K$ embeds in $L^1$. By \cite[Corollary 6.7]{AK05}, $R_p K$ embeds in $L^q$ for all $0<q<1$. In particular, for $q=-p$.
\end{proof}

Our final question is whether Theorem~\ref{t:embeds} can be extended to higher dimensions. Note that, for $n\geq 3$, not every origin-symmetric convex body embeds in $L^1$; thus, the convexity of $R_p K$ for $p\in (-1,0)$ established in \cite{DL26} is not sufficient for resolving the question in general.

\begin{ques}
    Fix $p\in (-1,0)$, $n\geq 3$, and let $K\subset \R^n$ be a convex body. Is $\Gamma(\frac{p}{2})\rho_{R_p K}^p$ a positive-definite distribution? Moreover, is $\Gamma(\frac{p}{2})\widehat{\rho_{R_p K}^p}$ the radial function of a star-shaped set with finite volume? A star body?
\end{ques}

{\bf Funding:} 
D. Langharst was funded by the U.S. National Science Foundation's MSPRF fellowship via NSF grant DMS-2502744.

A. Manui was funded by the Thailand Development and Promotion of Science and Technology talent project (DPST) and the Chateaubriand Fellowship of the Office for Science \& Technology of the Embassy of France in the United States.

A. Manui and A. Zvavitch were funded by the U.S. National Science Foundation Grant DMS -
2247771, the United States - Israel Binational Science Foundation (BSF) Grant 2018115.

{\bf Acknowledgments:} This project was initiated while the second and third authors were visiting Matthieu Fradelizi at Universit\'e Gustave Eiffel. We would like to wholeheartedly thank the university and the staff for their hospitality and kindness. 

We  thank G. Bianchi for the valuable email correspondence concerning Fourier transforms of characteristic functions of convex bodies, which culminated in Theorem~\ref{t:compactness}.

We thank M. Fradelizi for many valuable discussions and for pointing out his work on the isotropic constant, which led to an improvement of Theorem \ref{t:use_slicing}.

\bibliographystyle{siam}

\noindent Dylan Langharst
\\
Department of Mathematical Sciences, Carnegie Mellon University, Pittsburgh, PA 15213, USA.
\\
E-mail address: dlanghar@andrew.cmu.edu
\vspace{2mm}
\\
\noindent Auttawich Manui 
\\
Department of Mathematical Sciences, Kent State University, Kent, OH 44242, USA.
\\
E-mail address: amanui@kent.edu
\vspace{2mm}
\\
\noindent Artem Zvavitch
\\
Department of Mathematical Sciences, Kent State University, Kent, OH 44242, USA. 
\\ Email address: zvavitch@math.kent.edu
\end{document}